\newsavebox{\@brx}
\newcommand{\llangle}[1][]{\savebox{\@brx}{\(\m@th{#1\langle}\)}%
  \mathopen{\copy\@brx\kern-0.5\wd\@brx\usebox{\@brx}}}
\newcommand{\rrangle}[1][]{\savebox{\@brx}{\(\m@th{#1\rangle}\)}%
  \mathclose{\copy\@brx\kern-0.5\wd\@brx\usebox{\@brx}}}
\newtheorem{theorem}{Theorem}
\newtheorem{lemma}[theorem]{Lemma}
\newtheorem{proposition}[theorem]{Proposition}
\theoremstyle{definition}
\newtheorem{definition}[theorem]{Definition}
\theoremstyle{remark}
\newtheorem{remark}[theorem]{Remark}
\numberwithin{theorem}{section}
\numberwithin{equation}{section}
\DeclareMathOperator{\Div}{div}
\DeclareMathOperator{\dist}{dist}
\newcommand{\N}{\ensuremath{\mathbb{N}}}
\newcommand{\R}{\ensuremath{\mathbb{R}}}
\newcommand{\mint}{- \mskip-19,5mu \int}
\newcommand{\ca}{\operatorname{cap}}
\newcommand{\dx}{\mathrm{d}x}
\newcommand{\dt}{\mathrm{d}t}
\newcommand{\babs}[1]{\big|#1\big|}
\def\Xint#1{\mathchoice
    {\XXint\displaystyle\textstyle{#1}}%
    {\XXint\textstyle\scriptstyle{#1}}%
    {\XXint\scriptstyle\scriptscriptstyle{#1}}%
    {\XXint\scriptscriptstyle\scriptscriptstyle{#1}}%
    \!\int}
\def\XXint#1#2#3{\setbox0=\hbox{$#1{#2#3}{\int}$}
    \vcenter{\hbox{$#2#3$}}\kern-0.5\wd0}
\def\bint{\Xint-}
\def\dashint{\Xint{\raise4pt\hbox to7pt{\hrulefill}}}
\def\Xiint#1{\mathchoice
    {\XXiint\displaystyle\textstyle{#1}}%
    {\XXiint\textstyle\scriptstyle{#1}}%
    {\XXiint\scriptstyle\scriptscriptstyle{#1}}%
    {\XXiint\scriptscriptstyle\scriptscriptstyle{#1}}%
    \!\iint}
\def\XXiint#1#2#3{\setbox0=\hbox{$#1{#2#3}{\iint}$}
    \vcenter{\hbox{$#2#3$}}\kern-0.5\wd0}
\def\biint{\Xiint{-\!-}}
\renewcommand{\epsilon}{\varepsilon}
\newcommand{\eps}{\varepsilon}
\renewcommand{\rho}{\varrho}
\renewcommand{\epsilon}{\varepsilon}
\renewcommand{\rho}{\varrho}
\renewcommand{\d}{\:\! \mathrm{d}}
\newcommand{\A}{\mathbf{A}}
\numberwithin{equation}{section}
\subjclass[2020]{35B65, 35K51, 35K55, 35R37}
\keywords{Noncylindrical domains, parabolic $p$-Laplace system, global higher integrability, gradient estimates}
\begin{document}
\renewcommand{\refname}{References} 
\renewcommand{\abstractname}{Abstract} 
\title[Higher integrability in noncylindrical domains]{Global higher integrability for systems with $p$-growth structure in noncylindrical domains}

\author[K.~Moring]{Kristian Moring}
\address{Kristian Moring\\
Fachbereich Mathematik, Paris-Lodron Universit\"at Salzburg\\
Hellbrunner Str.~34, 5020 Salzburg, Austria}
\email{kristian.moring@plus.ac.at}

\author[L.~Schätzler]{Leah Schätzler}
\address{Leah Sch\"atzler\\
Department of Mathematics and Systems Analysis, Aalto University\\
P.O.~Box 11100, FI-00076 Aalto, Finland}
\email{ext-leah.schatzler@aalto.fi}

\author[C.~Scheven]{Christoph Scheven}
\address{Christoph Scheven\\
Fakultät für Mathematik, Universität Duisburg-Essen\\
Thea-Leymann-Str.~9, 45127 Essen, Germany}
\email{christoph.scheven@uni-due.de}

\date{}

\begin{abstract}
We consider the Cauchy--Dirichlet problem to systems with $p$-growth structure with $1 < p < \infty$, whose prototype is
$$
	\partial_t u- \Div \big( |Du|^{p-2} Du \big) = \Div \left( |F|^{p-2} F \right),
$$
in a bounded noncylindrical domain $E \subset \mathds{R}^{n+1}$.
For $p> \frac{2(n+1)}{n+2}$ and domains $E$ that satisfy suitable regularity assumptions and do not grow or shrink too fast, we prove global higher integrability of $Du$.
The result is already new in the case $p=2$.
\end{abstract}
\makeatother

\maketitle

\section{Introduction}
For $n \in \N$ and $T>0$, we suppose that $E \subset \R^n \times [0,T)$ is a bounded relatively open domain, and we consider the Cauchy--Dirichlet problem to partial differential equations (PDEs) of parabolic $p$-Laplace type
$$
	\partial_t u- \Div \big( |Du|^{p-2} Du \big) = \Div \left( |F|^{p-2} F \right)
	\qquad \text{in } E,
$$
where $1 < p < \infty$, $u \colon E \to \R^N$ for some $N \geq 1$, and $F \in L^p(E,\R^{Nn})$.
In particular, in the case $N>1$, we are dealing with systems of PDEs.

More generally, denoting the initial and boundary values by $g_o$ and $g$, respectively, we are concerned with systems with $p$-growth structure of the form
\begin{align} \label{eq:pde}
\left\{
\begin{array}{rl}
\partial_t u- \Div \mathbf{A}(x,t,u, D u) = \Div \left( |F|^{p-2} F \right) &\text{ in } E, \vspace{1mm}\\
u=g &\text{ on } \bigcup_{t \in [0,T)} \partial E^t \times \{t\}, \vspace{1mm}\\
u=g_o &\text{ on } E^0 \times \{0\}.
\end{array}
\right. 
\end{align}
For the precise assumptions on $E$, $\mathbf{A}$, $g_o$, and $g$, we refer to Section \ref{sec:setting}.
Moreover, the time slice of $E$ at a fixed time $t \in [0,T)$ is given by
$$
E^t = \{x \in \R^n: (x,t) \in E\},
$$
such that 
$$
E = \bigcup_{t\in [0,T)} E^t\times\{t\}.
$$
Viewing $E^t$ as a spatial set that changes over time instead of focusing on $E$ as a \emph{noncylindrical domain} in space-time, the terms \emph{time-varying} or \emph{time-changing domains} have also been used in the literature.
For the existence theory for PDEs of parabolic $p$-Laplace type, and some generalizations in such domains, we refer to \cite{BDSS,Calvo-Novaga-Orlandi,Lan-etal,Paronetto,SSSS-nondecreasing,SSSS-general}.

The motivation for considering PDEs in noncylindrical domains is twofold.
On the one hand, they appear naturally in some physical and biological applications, e.g.~the flow of fluids through a container whose walls can be moved, or pattern formation in a growing organism.
For an overview of applications of different PDEs involving a noncylindrical setting, we refer to~\cite{Crampin-Gaffney-Maini,Knobloch-Krechetnikov,Krechetnikov,Murray,Shelley-Tiany-Wlodarski,Stefan}.
On the other hand, from a purely mathematical point of view, the noncylindrical formulation truly takes the evolutionary nature of the parabolic system \eqref{eq:pde} into account, whereas the cylindrical case $E = \Omega \times [0,T)$ constitutes a simpler special case.

In the present paper, we establish the global higher integrability of the gradient of weak solutions $u$ to \eqref{eq:pde}.
This means that while a priori we only know that $|Du|$ is integrable to power $p$, under suitable conditions on $E$, $g_o$, $g$, and $F$ we prove that $|Du|$ is actually integrable to a higher exponent than $p$.
To the best of our knowledge, our result is already new for the case $p=2$ in the noncylindrical situation.

Before we state the precise result, we briefly discuss the history of the problem.
For elliptic PDEs of $p$-Laplace type in a domain $\Omega \subset \R^n$, local higher integrability is due to Meyers \& Elcrat \cite{Meyers-Elcrat}, while global higher integrability was established by Kilpeläinen \& Koskela \cite{Kilpelainen-Koskela}.
In particular, the authors of the latter article assume that the complement of $\Omega$ satisfies a \emph{uniform $p$-fatness condition} (see Definition \ref{def:unif-p-fat} below), and show that this condition is essentially optimal.
A key component in their proof is the self-improving property of the uniform $p$-fatness condition that was first established by Lewis \cite{Lewis} in the more general context of Riesz potentials (see Lemma \ref{lem:fatness-larger-exponents} below).

For PDEs of parabolic $p$-Laplace type with quadratic growth $p=2$, Giaquinta \& Struwe \cite{Giaquinta-Struwe} prove local higher integrability.
However, their technique does not carry over to the case of more general $p$.
The breakthrough was achieved almost 20 years later by Kinnunen \& Lewis \cite{Kinnunen-Lewis:1,Kinnunen-Lewis:very-weak}, who were able to deal with the whole range $\max\big\{ 1, \frac{2n}{n+2} \big\} < p < \infty$.
Note that the critical exponent $\frac{2n}{n+2}$ appears naturally in different areas of regularity theory for the parabolic $p$-Laplace equation.
Their key idea was to use a suitable \emph{intrinsic geometry} to balance the different scalings in the evolution and diffusion terms, i.e., to consider cylinders of the form $Q_{\varrho}^{(\lambda)}(x_o,t_o) := B_\varrho(x_o) \times (t_o -\lambda^{2-p} \varrho^2, t_o + \lambda^{2-p} \varrho^2)$, whose length depends on the integral average of $|Du|^p$ over the same cylinder via
$$
    \lambda^p \approx \biint_{Q_{\varrho}^{(\lambda)}(x_o,t_o)} |Du|^p \,\dx\dt.
$$
The main tool in the proof of higher integrability is a \emph{reverse Hölder inequality} for solutions in such intrinsic cylinders, meaning that the integral average of $|Du|^p$ over an intrinsic cylinder can be estimated from above in terms of the integral average of $|Du|^q$ with a smaller exponent $q<p$.
Subsequently, Parviainen \cite{Parviainen-degenerate,Parviainen-singular} established global higher integrability in the cylindrical setting $\Omega \times [0,T)$ under the condition that $\R^n \setminus \Omega$ is uniformly $p$-fat.
Since this condition is analogous to the elliptic case, it is optimal.

In the noncylindrical setting, we are able to deal with the same level of generality concerning the spatial regularity of $E$.
In addition, we give sufficient assumptions on the speed at which $E$ may grow or shrink in time.
It is an interesting question for future research to determine the optimal condition on $E$ with respect to the time variable that allows for global higher integrability.
To state our main result, we need the following notation.
For $z_o = (x_o,t_o) \in \R^{n+1}$ and $R > 0$, we denote standard parabolic cylinders by
$$
Q_R := Q_{R}(z_o) = B_R (x_o) \times \big(t_o - R^2 , t_o + R^2 \big).
$$
Further, throughout the paper, let
\begin{equation}
	G := |D g|^p + |\partial_t g|^{\hat p'} + |F|^p,
	\label{eq:definition-G}
\end{equation}
where $\hat p'$ is defined in~\eqref{def-hat-p}.
Moreover, letting $\beta$ given by \eqref{eq:beta-range} be the exponent in the two-sided condition \eqref{assumption:two-sided-growth}, the \emph{scaling deficit} is given by
\begin{equation}
	d=
	\left\{
	\begin{array}{ll}
		\frac{p}{2} & \text{for } p \geq 2, \\[5pt]
      	\frac{p (2 \beta-1)}{\beta( p(n+2) - 2n) - 2 } & \text{for } \frac{2(n+1)}{n+2} < p < 2.
	\end{array}
	\right.
   \label{eq:scaling-deficit}
\end{equation}
Then, our main theorem is the following.
\begin{theorem}\label{thm:main}
Let $\frac{2(n+1)}{n+2} < p <\infty$, suppose that $\R^n \setminus
E^t$ is uniformly $p$-fat in the sense of Definition
\ref{def:unif-p-fat} with a fatness constant $\alpha>0$ for every $t \in (0,T)$, and assume that $E$ satisfies the conditions \eqref{assumption:one-sided-growth} with $\ell$ given by \eqref{assumption:function-one-sided-growth}, and \eqref{assumption:two-sided-growth} with some $\beta$ in the range given by \eqref{eq:beta-range}.
Furthermore, let the operator $\mathbf{A}$ be given according to
\eqref{assumption:A}, and assume that for some $\sigma > 0$, the
lateral and initial boundary values $g \in \mathfrak{G}^{\sigma}$ and
$g_o \in \mathfrak{G}^{\sigma}_o$ \textup{(}see \eqref{def:G-sigma} and \eqref{def:G-sigma-o} for the definition of these spaces\textup{)} satisfy \eqref{compatibility-lateral-initial}, and the source term $F \in L^{p+\sigma}(E,\R^{Nn})$. 
Finally, let $u$ be a weak solution to~\eqref{eq:pde} according to Definition~\ref{def:weak-sol-global}.
Then, there exists $\eps_o = \eps_o(n,p,C_o,C_1,M, \alpha, \beta) >0$ such that 
$$
Du \in L^{p+\eps_1}(E, \R^{Nn}),
$$
where $\eps_1 = \min\{\eps_o, \sigma\}$. Furthermore, for every $\eps \in (0,\eps_o]$ and any cylinder $Q_{2R}(z_o) \subset \R^{n+1}$ with $z_o \in \overline{E}$, the bound
\begin{align*}
	\biint_{Q_{R}} |Du|^{p+\eps} \chi_E\, \d x \d t
	&\leq
	c \left(1+ \biint_{Q_{4R}} (|Du|^p + G  )\chi_E \, \d x \d t \right)^{1+\frac{\eps d}{p}}
	\\ &\quad
	+ c \biint_{Q_{2R}} G^{1+\frac{\eps}{p}} \chi_E \, \d x \d t
	\\ & \quad
	+ c  \left( \bint_{ B_{2R} } |D g_o|^{{p}_*+\eps} \chi_{E^0} \, \d x \right)^{\frac{\hat{p} +\eps }{\hat{p}_*+\eps}}
\end{align*}
holds true for some $c = c(n,p,C_o,C_1,M,\alpha, \beta) > 0$, where $G$ is defined according to \eqref{eq:definition-G} and $d$ is given by \eqref{eq:scaling-deficit}.
\end{theorem}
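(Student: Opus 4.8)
The plan is to follow the intrinsic-geometry strategy of Kinnunen--Lewis, adapted to the global and noncylindrical setting in the spirit of Parviainen, the genuinely new feature being the time-dependence of the spatial slices $E^t$. First I would fix the intrinsic scaling: for a parameter $\lambda \geq 1$ I work with cylinders $Q_\rho^{(\lambda)}(z_o) = B_\rho(x_o) \times (t_o - \lambda^{2-p}\rho^2, t_o + \lambda^{2-p}\rho^2)$, distinguishing the degenerate range $p \geq 2$ from the singular range $\frac{2(n+1)}{n+2} < p < 2$. The scaling deficit $d$ in \eqref{eq:scaling-deficit} is precisely the power that the mismatch between the diffusion and evolution scalings forces into the final estimate; in the singular branch it also absorbs the loss produced when comparing slices across the time-length of an intrinsic cylinder, which is quantified by the exponent $\beta$ in the two-sided condition \eqref{assumption:two-sided-growth}.

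The analytic core consists of two ingredients. The first is a Caccioppoli-type energy estimate for $u-g$ (and for $u-g_o$ at the initial time) on intrinsic cylinders, using the structure conditions \eqref{assumption:A} together with space and time cut-offs adapted to $E$; here the one-sided growth condition \eqref{assumption:one-sided-growth} with modulus $\ell$ from \eqref{assumption:function-one-sided-growth} is what allows the time cut-off to be placed so that the relevant sub-cylinders stay inside $E$. The second ingredient is a Sobolev--Poincaré inequality on the time slices $Q_\rho^{(\lambda)}(z_o) \cap E$ that does not see the boundary trace of $u-g$ on $\partial E^t$; this is exactly where uniform $p$-fatness of $\R^n \setminus E^t$ enters, and, crucially, its self-improving property (Lemma \ref{lem:fatness-larger-exponents}) lets one work with an exponent strictly below $p$ on the right-hand side, uniformly in $t \in (0,T)$. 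At the initial slice the corresponding estimate for $u-g_o$ produces a Sobolev-conjugate exponent, which is the origin of the exponents $p_*, \hat p_*, \hat p$ in the last term of the asserted bound, and the compatibility condition \eqref{compatibility-lateral-initial} is used to glue the lateral and initial estimates near the corner $E^0 \times \{0\}$.

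Combining these, and restricting to cylinders on which the intrinsic coupling $\lambda^p \approx \biint_{Q_\rho^{(\lambda)}} (|Du|^p + G)\,\d x\,\d t$ holds, I would derive a reverse Hölder inequality of the form
$$
\biint_{Q_{\rho/2}^{(\lambda)}} |Du|^p\,\d x\,\d t \leq c\left(\biint_{Q_\rho^{(\lambda)}} |Du|^q\,\d x\,\d t\right)^{\!p/q} + c\,\biint_{Q_\rho^{(\lambda)}} G\,\d x\,\d t + (\text{initial terms}),
$$
for some $q < p$. Then a stopping-time argument on the super-level sets of a suitable (restricted, intrinsic) maximal function of $|Du|^p + G$, followed by a Vitali covering, extracts for each level $\lambda \geq \Lambda_0$ a countable family of pairwise disjoint intrinsic cylinders on which the coupling holds and the reverse Hölder inequality applies; summing over the family and integrating in $\lambda$ over $[\Lambda_0,\infty)$ via a layer-cake computation yields both the qualitative statement $Du \in L^{p+\eps_1}$ and the quantitative bound, with $\eps_o$ determined by the self-improvement exponent from $p$-fatness and by $\beta$, and $\eps_1 = \min\{\eps_o,\sigma\}$ because the data only lie in the spaces $\mathfrak{G}^\sigma$, $\mathfrak{G}^\sigma_o$ and $F \in L^{p+\sigma}$.

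The main obstacle I expect is the interplay between the moving boundary and the intrinsic geometry. A cylinder $Q_\rho^{(\lambda)}(z_o)$ with $z_o$ near the lateral boundary must, over the whole relevant range of radii, still intersect $E$ in a set whose complement is comparably $p$-fat and whose measure is comparable to that of the full cylinder — otherwise neither the boundary Poincaré inequality nor the averaging in the reverse Hölder inequality is available. The condition \eqref{assumption:one-sided-growth} prevents $E$ from shrinking too abruptly forward in time, so that moving slightly backward in time keeps one inside $E$, while \eqref{assumption:two-sided-growth} bounds the distortion of the slices across the time-length $\lambda^{2-p}\rho^2$; in the singular case this distortion is exactly what degrades the deficit to the second branch of \eqref{eq:scaling-deficit}. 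Making the choices of the stopping-time threshold $\Lambda_0$, the number of sub-cylinders, and the ranges of radii all mutually consistent and uniform in $t$ — so that the geometric hypotheses are genuinely applicable at each step of the covering argument — is the delicate part; once this bookkeeping is in place, the remaining estimates are a careful but by-now-standard adaptation of the Kinnunen--Lewis--Parviainen scheme.
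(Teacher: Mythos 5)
Your overall scheme (intrinsic cylinders, Caccioppoli estimate, capacity-based boundary Poincar\'e with the self-improvement of $p$-fatness, reverse H\"older inequality, stopping time plus Vitali covering and a layer-cake integration in the level $\lambda$) is the same skeleton the paper uses, and your discussion of how \eqref{assumption:two-sided-growth} with $\beta>\tfrac12$ keeps all time slices of an intrinsic cylinder near the lateral boundary once one slice is (cf.\ Lemma \ref{lem:ball-complement-intersect}) is on target. A small slip: the self-improvement to an exponent below $p$ is Theorem \ref{thm:fatness-improvement}, not Lemma \ref{lem:fatness-larger-exponents}, which goes in the opposite direction.

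The genuine gap is the treatment of the time-derivative term in the energy estimate near the lateral boundary, which is precisely the new difficulty of the noncylindrical setting. You assign the one-sided condition \eqref{assumption:one-sided-growth} a geometric role (``the time cut-off can be placed so that the relevant sub-cylinders stay inside $E$'', ``prevents $E$ from shrinking too abruptly''), but this is both directionally wrong --- \eqref{assumption:one-sided-growth} limits how fast $E$ may \emph{grow}, not shrink --- and not where the condition is actually needed: the boundary cylinders do not stay inside $E$, and all estimates are taken over $Q\cap E$. The real issue is that, after testing \eqref{eq:weak-pde} with $\varphi=\eta^p\zeta\psi_\eps(u-g)$, one must bound $\langle\partial_t(u-g),\eta^p\zeta\psi_\eps(u-g)\rangle$ from below; in a cylindrical domain this is done by Steklov averaging, but when $E^t$ varies in $t$ a time-mollified version of $u-g$ is in general no longer an admissible test function, so that route breaks down. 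The paper resolves this with the integration-by-parts inequality of Proposition \ref{prop:ineq_integration_by_parts_formula} (from \cite{BDSS,SSSS-general}), whose proof requires exactly \eqref{assumption:one-sided-growth} with $\ell\in W^{1,r}$ as in \eqref{assumption:function-one-sided-growth}, the uniform $p$-fatness (through the density result of Lemma \ref{lem:c0infty-v2p-density}, which also makes $\partial_t u\in(\mathcal{V}^{p,0}(E))'$ meaningful, Remark \ref{rem:time-derivative}), and the restriction $p>\frac{2(n+1)}{n+2}$. Without this ingredient, or a substitute for it, your Caccioppoli estimate near the moving boundary --- and hence the lateral reverse H\"older inequality and everything downstream --- cannot be established, so the plan as written does not close. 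The rest of your bookkeeping (scaling deficit $d$, choice of the stopping threshold depending on $(R_2-R_1)$, truncation of $|Du|$ and Fubini in $\lambda$, absorption via the iteration lemma) matches Section \ref{sec:proof-of-mainthm} and would go through once the energy estimate is secured.
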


While we follow the same overall strategy as in the cylindrical setting, two key steps in the proof of Theorem \ref{thm:main} rely on new ideas.
First of all, the integration by parts formula in Proposition \ref{prop:ineq_integration_by_parts_formula}, that was first developed in \cite{BDSS} and then generalized in \cite{SSSS-general}, is a crucial component in the proof of the energy estimate near the lateral boundary of $E$ in Lemma \ref{lem:Caccioppoli-boundary}.
Note that it is applicable, since any weak solution in the sense of Definition \ref{def:weak-sol-global} possesses a distributional time derivative, see Remark \ref{rem:time-derivative}.
In turn, the integration by parts formula is based on the one-sided growth condition \eqref{assumption:one-sided-growth} with $\ell$ given by \eqref{assumption:function-one-sided-growth}.
Note that it would be interesting to determine whether the lower bound $p> \frac{2(n+1)}{n+2}$ appearing in this context is optimal.

The next new idea concerns the precise distinction between lateral, initial and interior cylinders, and the proof of reverse Hölder inequalitites for lateral cylinders in the noncylindrical setting, see Lemma \ref{lem:reverse-holder-lateral}.
In particular, in this context we need to ensure that all time slices of an intrinsic cylinder $Q_{\varrho}^{(\lambda)}(x_o,t_o)$ are close to the lateral boundary $\bigcup_{t \in [0,T)} \partial E^t \times \{t\}$ if this is the case for one of the time slices of the cylinder.
In this context, the two-sided growth condition \eqref{assumption:two-sided-growth} with $\beta$ in the range given by \eqref{eq:beta-range} comes into play.
Again, while the lower bound $p> \frac{2(n+1)}{n+2}$ is vital in our approach, it would be interesting to determine whether it is optimal.

The article is organized as follows.
In Section \ref{sec:preliminaries} we give the precise setting, clarify notation, and collect auxiliary results.
Next, in Section \ref{sec:Caccioppoli} we establish an energy estimate, which is used in Section \ref{sec:rev-Holder} to derive a reverse Hölder inequality close to the lateral boundary.
Moreover, we recall the corresponding results for cylinders away from the lateral boundary from the literature.
We conclude the proof of Theorem \ref{thm:main} by standard arguments in Section \ref{sec:proof-of-mainthm}.

\section{Preliminaries}
\label{sec:preliminaries}

\subsection{Setting}
\label{sec:setting}
In this section, we give the precise setting.
First of all, we assume that the complement of the time slices of $E$ is uniformly $p$-fat in the following sense.
\begin{definition} \label{def:unif-p-fat}
A set $A \subset \R^n$ is uniformly $p$-fat if there exists a constant $\alpha > 0$ such that
$$
\ca_p \big(A \cap \overline{B}_\rho(x), B_{2\rho}(x) \big) \geq \alpha \ca_p \big(\overline{B}_\rho(x), B_{2\rho}(x) \big) 
$$
for every $x \in A$ and $\rho > 0$.
\end{definition}

Moreover, we need to control the speed at which $E$ is allowed to grow and shrink over time.
To this end, we denote the complementary excess by
$$
\mathbf{e}^c (E^s,E^t) := \sup_{x \in \R^n \setminus E^t} \dist (x, \R^n \setminus E^s),\quad \text{ for } s,t \in [0,T).
$$
We ensure that $E$ does not grow too fast by assuming that
\begin{align}\label{assumption:one-sided-growth}
    \mathbf{e}^c \big(E^{t}, E^{s} \big) \le \vert \ell(t)-\ell(s) \vert
    \quad \text{for $0 \le s \le t <T$,}
\end{align}
where the function $\ell \colon (-1,T+1)\rightarrow(0,\infty)$ satisfies
\begin{align}\label{assumption:function-one-sided-growth}
  \ell\in W^{1,r}(-1,T+1)
  \mbox{\quad for\quad}
  \left\{
  \begin{array}{ll}
    r = \frac{p}{p-1}, & \text{if }p \geq 2,\\[1.5ex]
    r = \frac{p(n+2)-2n}{p(n+2)-2(n+1)}, & \text{if } \frac{2(n+1)}{n+2} < p <2.
  \end{array}
  \right.
\end{align}
In addition to~\eqref{assumption:one-sided-growth}, we impose the two-sided condition
\begin{equation} \label{assumption:two-sided-growth}
\mathbf{e}^c (E^s,E^t) \leq M |t-s|^\beta \quad \text{ for all } s,t \in [0, T)
\end{equation}
for some $M>0$ (without loss of generality, we assume that $M \geq 1$ throughout the paper), and $\beta$ satisfying
\begin{equation} \label{eq:beta-range}
\beta \in \left( \max \left\{\tfrac12, \tfrac{2}{p(n+2) - 2n} \right\},1\right].
\end{equation}
Note that the interval in \eqref{eq:beta-range} is nonempty when $p > \frac{2(n+1)}{n+2}$. 
\begin{remark}
At this point, we would like to discuss why we assume both the one-sided condition \eqref{assumption:one-sided-growth} and the two-sided condition \eqref{assumption:two-sided-growth} on $E$ instead of imposing only one condition on the speed at which $E$ can grow and shrink, respectively.
On the one hand, we cannot omit \eqref{assumption:one-sided-growth} with $\ell$ given by~\eqref{assumption:function-one-sided-growth}, since the Hölder type condition \eqref{assumption:two-sided-growth} does not imply the existence of the required time derivative $\ell'$.
On the other hand, \eqref{assumption:one-sided-growth} with $\ell$ given by~\eqref{assumption:function-one-sided-growth} implies a one-sided condition of the type~\eqref{assumption:two-sided-growth} with $0\leq t \leq s < T$, $M =  \|\ell'\|_{L^r(-1,T+1)}$, and $\beta = \frac{2}{p(n+2)-2n} > \tfrac12$ in the case $\frac{2(n+1)}{n+2} < p < 2$, and $\beta = \tfrac1p \leq \tfrac12$ in the case $p \geq 2$. However, in the proof of Theorem~\ref{thm:main} we need that $\beta > \tfrac12$ in all cases, see e.g. Lemma~\ref{lem:ball-complement-intersect}. Moreover, in the case $\tfrac{2}{p(n+2)-2n}<p<2$, we assume the strict inequality $\beta > \tfrac{2}{p(n+2)-2n}$ in order to guarantee that the scaling deficit $d$, defined in~\eqref{eq:scaling-deficit}, is positive and well defined in order to exploit a suitable stopping time argument in the beginning of Section~\ref{sec:proof-of-mainthm}.
\end{remark}

Next, we suppose that in~\eqref{eq:pde}, $\A \colon E\times\R^N\times\R^{Nn}\to\R^{Nn}$ is a Carath\'eodory
function satisfying
\begin{align}
\label{assumption:A}
\left\{
\begin{array}{c}
\A(x,t,u,\xi)\cdot \xi \geq C_o |\xi|^p, \\[5pt]
|\A(x,t,u, \xi)| \leq C_1|\xi|^{p-1}
\end{array}
\right.
\end{align}
for a.e.~$(x,t) \in E$ and all $(u,\xi) \in \R^N \times \R^{Nn}$ with constants $0<C_o \leq C_1$.

We will consider exponents
\begin{equation}
\label{def-hat-p}
	\hat p := \max \{2,p\},
	\quad \hat{p}' := \min \left\{2,\tfrac{p}{p-1} \right\},
	\quad \hat p_* := \max\left\{ 1, \tfrac{n\hat p}{n+2} \right\}.
\end{equation}
Moreover, we define the parabolic function spaces
$$
	V^p(E) := \left\{ u \in L^p (E, \R^N) : Du \in L^p(E,\R^{Nn}) \right\}
$$
and
\begin{equation}\label{eq:V2p}
	V^p_2 (E) := V^p(E) \cap L^2(E,\R^N),
\end{equation}
and consider the subspaces
\begin{equation*}
    \mathcal{V}^{p,0}(E) :=
    \Big\{u\in V^{p}(E) : u(t) \in W^{1,p}_0(E^t,\R^N) \mbox{ for a.e.~$t\in[0,T)$} \Big\}
\end{equation*}
and
\begin{equation*}
    \mathcal{V}^{p,0}_2(E) :=
    \Big\{u\in V^{p}_2(E) : u(t) \in W^{1,p}_0(E^t,\R^N) \mbox{ for a.e.~$t\in[0,T)$} \Big\}.
\end{equation*}
We equip these spaces with the norms
\begin{equation*}
\|v\|_{V^p(E)} := \| Dv \|_{L^p(E,\R^{Nn})} + \|v\|_{L^p(E, \R^N)}
\end{equation*}
and
\begin{equation*}
\|v\|_{V^p_2(E)} := \| Dv \|_{L^p(E,\R^{Nn})} + \|v\|_{L^{\hat p}(E, \R^N)},
\end{equation*}
respectively.
As usual, $(\mathcal{V}^{p,0}(E))^{\prime}$ and $(\mathcal{V}^{p,0}_2(E))^{\prime}$ equipped with the respective operator norms denote the dual spaces of $\mathcal{V}^{p,0}(E)$ and $\mathcal{V}^{p,0}_2(E)$.
For the lateral and initial boundary data, we define the spaces 
\begin{align}
	\label{def:G-sigma}
	\mathfrak{G}^{\sigma} =
	\big\{g \in L^{ p} (E, \R^N) : 
	& \;D g \in L^{p+\sigma}(E,\R^{Nn}), g\chi_E \in L^\infty(0,T;L^2(\R^n,\R^N)) \\
	&\text{ and } \partial_t g \in L^{\hat{p}'(1+\frac{\sigma}{p})}(E,\R^N) \big\},
	\nonumber
\end{align}
and
\begin{align}
	\mathfrak{G}_o^{\sigma} = W^{1,\hat p_* + \sigma}(E^0, \R^N) \cap L^2(E^0, \R^N),
	\label{def:G-sigma-o}
\end{align}
respectively, where $\sigma \geq 0$.
We consider lateral data $g\in\mathfrak{G}^{\sigma}$ and initial data
$g_o\in\mathfrak{G}_o^{\sigma}$ that are compatible in the sense 
\begin{equation} \label{compatibility-lateral-initial}
\bint_{0}^h \int_{\R^n} |g - g_o|^2 \chi_E \, \d x \d t \xrightarrow{h \to 0} 0.
\end{equation}

Now, we are ready to define weak solutions to \eqref{eq:pde}.

\begin{definition} \label{def:weak-sol-global}
Suppose that the vector field $\A \colon E \times \R^N \times \R^{Nn} \to \R^{Nn}$ satisfies~\eqref{assumption:A} and $F \in L^p(E,\R^{Nn})$. We call a map $u \in V^p_2(E)$, with $u \chi_E \in L^\infty(0,T;L^2(\R^n,\R^N))$, a weak solution to~\eqref{eq:pde} with lateral boundary values $g \in \mathfrak{G}^0$ and initial boundary values $g_o \in \mathfrak{G}^0_o$ if 
$$
(u-g)(t)\in W^{1,p}_0(E^t,\R^N)\quad \mbox{ for a.e.~$t\in(0,T)$},
$$
\begin{equation} \label{eq:initial-cond}
\bint_{0}^h \int_{\R^n} |u - g_o|^2 \chi_E \, \d x \d t \xrightarrow{h \to 0} 0,
\end{equation}
and
\begin{equation} \label{eq:weak-pde}
	\iint_{E}  u\cdot \partial_t \varphi - \A (x,t,u,Du) \cdot D \varphi  \, \d x \d t
	=
	\iint_{E} |F|^{p-2}F \cdot D \varphi \, \d x \d t
\end{equation}
for every $\varphi \in C_0^\infty(E, \R^N)$.
\end{definition}

\begin{remark}
\label{rem:time-derivative}
  We note that under the assumptions of Theorem~\ref{thm:main},
  every weak solution $u$ to~\eqref{eq:pde} in the sense of
  Definition~\ref{def:weak-sol-global} satisfies $\partial_tu\in
  (\mathcal{V}^{p,0}(E))'$.
  To verify this claim, we consider a weak solution $u$ in the above sense and a test function
  $\varphi\in C^\infty_0(E,\R^N)$. Equation~\eqref{eq:weak-pde}, assumption~\eqref{assumption:A} and
  H\"older's inequality imply 
  \begin{align*}
    |\langle\partial_tu,\varphi\rangle|
    &=
      \bigg|\iint_Eu\cdot\partial_t\varphi\,\dx\dt\bigg|
    =
    \bigg|\iint_E\big(\A(x,t,u,Du)+|F|^{p-2}F\big)\cdot D\varphi\,\dx\dt\bigg|\\
    &\le
      \iint_E \big(C_1|Du|^{p-1}+|F|^{p-1}\big)|D\varphi|\,\dx\dt\\
    &\le
      \big(C_1\|Du\|_{L^p(E)}^{p-1}+\|F\|_{L^p(E)}^{p-1}\big)\|\varphi\|_{V^{p}(E)}. 
  \end{align*}
  Consequently, the distributional time derivative
  $\partial_tu \colon C^\infty_0(E,\R^N)\to\R$ is continuous with respect to
  the $V^{p}(E)$-norm.  Moreover, since $\R^n\setminus E^t$
  is uniformly $p$-fat in the sense of Definition~\ref{def:unif-p-fat}
  for every $t\in(0,T)$ and~\eqref{assumption:two-sided-growth} holds
  true, Lemma~\ref{lem:c0infty-v2p-density} implies
  that $C^\infty_0(E,\R^N)$ is dense in
  $\mathcal{V}^{p,0}(E)$ with respect to the $V^{p}(E)$-norm. Therefore, the time derivative can be
  extended in a unique way to a bounded linear operator
  $\partial_tu \colon \mathcal{V}^{p,0}(E)\to\R$. In this sense, we have
  $\partial_tu\in (\mathcal{V}^{p,0}(E))'$.
\end{remark}

\subsection{Further notation}
Later on, we will use the following notation.
For a point $z_o = (x_o,t_o) \in \R^n \times \R$ we consider cylinders in space-time of the form
$$
	Q_{r,s}(z_o) :=
	B_r(x_o) \times \Lambda_s(t_o),
$$
where $B_r(x_o) \subset \R^n$ denotes the ball with center $x_o$ and radius $r>0$, and
$$
	\Lambda_s(t_o) :=
	(t_o-s,t_o+s);
$$
further, for $\lambda>0$ intrinsic cylinders are defined by
$$
	Q_\rho^{(\lambda)}(z_o) :=
	B_\rho(x_o) \times \Lambda_\rho^{(\lambda)}(t_o),
$$
where
$$
	\Lambda_\rho^{(\lambda)}(t_o) :=
	(t_o - \lambda^{2-p} \rho^2, t_o + \lambda^{2-p} \rho^2).
$$
For the sake of simplicity, we will omit $z_o$, $x_o$ and $t_o$ in our notation when it is clear from the context.

Next, for sets $A \subset \R^n$ and $U \subset \R^{n+1}$ and functions $u$ and $v$ defined in $A$ and $U$, respectively, we write
$$
	\bint_A u \,\dx
	:=
	\frac{1}{|A|} \int_A u \,\dx
	\qquad \text{and} \qquad
	\biint_U v \,\dx\dt
	:=
	\frac{1}{|U|} \iint_U v \,\dx\dt,
$$
provided that $|A|, |U| > 0$, where $|\cdot|$ is used for the Lebesgue measures both in $\R^n$ and $\R^{n+1}$.
If $A= B_r(x_o)$ is a ball, we also denote the preceding integral average by $(u)_{x_o,r}$.

\subsection{Auxiliary results}
In this section, we collect auxiliary result that are well-known in the literature.
General references for nonlinear potential theory are \cite{Heinonen-etal-book,KLV21}.
We start with the following consequence of $p$-fatness, see \cite[Lemma 3.8]{Parviainen-degenerate}.
\begin{lemma} \label{lem:consequence-fatness}
Let $\Omega \subset \R^n$ be a bounded open set and suppose that $\R^n\setminus \Omega$ is uniformly $p$-fat in the sense of Definition~\ref{def:unif-p-fat}. Let $y\in \Omega$ such that $B_{\rho/3}(y)\setminus \Omega \neq \varnothing$. Then, there exists a constant $c =c(n,p,\alpha)>0$ such that 
$$
\ca_p\big(\overline B_{\rho/2}(y)\setminus \Omega,B_{\rho}(y)\big) \geq  c \ca_p \big(\overline B_{\rho/2}(y),B_{\rho}(y)\big).
$$
\end{lemma}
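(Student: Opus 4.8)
The plan is to deduce the estimate from the uniform $p$-fatness of $\R^n\setminus\Omega$ applied at one carefully chosen point, using afterwards only soft properties of the variational $p$-capacity: monotonicity in both arguments, translation invariance, the scaling identity $\ca_p(\lambda K,\lambda U)=\lambda^{n-p}\ca_p(K,U)$, and the standard comparability of capacities taken with respect to surrounding balls of comparable radius. By hypothesis we may fix a point $x\in B_{\rho/3}(y)\setminus\Omega$; then $x\in\R^n\setminus\Omega$ and $|x-y|<\tfrac{\rho}{3}$, and this is precisely the information that makes the radii below fit together. The triangle inequality yields the inclusions $\overline B_{\rho/6}(x)\subset B_{\rho/2}(y)$ and $B_{\rho/3}(x)\subset B_{\rho}(y)\subset B_{4\rho/3}(x)$, so in particular the compact set $S:=(\R^n\setminus\Omega)\cap\overline B_{\rho/6}(x)$ satisfies $S\subset\overline B_{\rho/2}(y)\setminus\Omega$ (and $x\in S$, so $S\neq\varnothing$). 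Since $\ca_p(\cdot,U)$ is nondecreasing in the condenser set,
\[
  \ca_p\big(\overline B_{\rho/2}(y)\setminus\Omega,B_{\rho}(y)\big)\ \ge\ \ca_p\big(S,B_{\rho}(y)\big).
\]

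The only step that is not pure monotonicity is exchanging the surrounding ball $B_\rho(y)$ for $B_{\rho/3}(x)$. Here I would use that $S$ lies well inside both $B_{\rho/3}(x)$ and $B_{4\rho/3}(x)$: multiplying the capacitary potential of the condenser $(S,B_{4\rho/3}(x))$ by a cutoff $\eta\in C_0^\infty(B_{\rho/3}(x))$ with $\eta\equiv1$ on $B_{\rho/6}(x)$ and $|\nabla\eta|\le c/\rho$, and absorbing the lower-order term by Poincaré's inequality on $B_{4\rho/3}(x)$, gives $\ca_p(S,B_{\rho/3}(x))\le c(n,p)\,\ca_p(S,B_{4\rho/3}(x))$. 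Combining this with the inclusions $B_{\rho/3}(x)\subset B_{\rho}(y)\subset B_{4\rho/3}(x)$ and the monotonicity of $\ca_p(S,\cdot)$ in the surrounding set, we obtain
\[
  \ca_p\big(S,B_{\rho}(y)\big)\ \ge\ c(n,p)\,\ca_p\big(S,B_{\rho/3}(x)\big).
\]
This comparability is entirely standard and could instead simply be quoted, e.g. from \cite{Heinonen-etal-book}.

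Finally, applying the uniform $p$-fatness of $\R^n\setminus\Omega$ at the point $x$ with radius $r=\tfrac{\rho}{6}$ — so that $B_{2r}(x)=B_{\rho/3}(x)$ and $S=(\R^n\setminus\Omega)\cap\overline B_r(x)$ — gives
\[
  \ca_p\big(S,B_{\rho/3}(x)\big)\ \ge\ \alpha\,\ca_p\big(\overline B_{\rho/6}(x),B_{\rho/3}(x)\big),
\]
while translation invariance together with the scaling identity applied with $\lambda=3$ gives $\ca_p\big(\overline B_{\rho/6}(x),B_{\rho/3}(x)\big)=3^{\,p-n}\,\ca_p\big(\overline B_{\rho/2}(y),B_{\rho}(y)\big)$. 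Chaining these relations proves the assertion with a constant $c=c(n,p,\alpha)>0$ which equals $\alpha\,3^{\,p-n}$ up to a factor depending only on $n$ and $p$. I expect the main obstacle to be essentially bookkeeping: one must choose $x$ with $|x-y|<\tfrac{\rho}{3}$ and then keep the radii $\tfrac{\rho}{6},\tfrac{\rho}{3},\tfrac{\rho}{2},\rho$ arranged so that every inclusion above holds simultaneously, together with the routine cutoff estimate needed for the change of surrounding ball, which is the single place where anything other than monotonicity and scaling enters. The complete argument is given in \cite[Lemma~3.8]{Parviainen-degenerate}.
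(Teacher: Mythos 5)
The paper does not prove Lemma~\ref{lem:consequence-fatness} itself but quotes it from \cite[Lemma 3.8]{Parviainen-degenerate}, and your argument is a correct reconstruction of exactly that standard route: monotonicity to reduce to $S=(\R^n\setminus\Omega)\cap\overline B_{\rho/6}(x)$ for a point $x\in B_{\rho/3}(y)\setminus\Omega$, the comparability of $\ca_p(S,\cdot)$ with respect to the nested balls $B_{\rho/3}(x)\subset B_\rho(y)\subset B_{4\rho/3}(x)$, the fatness hypothesis at $x$ with radius $\rho/6$, and scaling of the ball capacity. The only cosmetic point is that your cutoff should equal $1$ on a neighbourhood of the closed ball $\overline B_{\rho/6}(x)\supset S$ (say $\eta\equiv1$ on $B_{\rho/5}(x)$ with support in $B_{\rho/4}(x)$), which changes nothing in the estimate.
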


Moreover, we have the following estimates for the variational $p$-capacity of a smaller ball in a concentric larger ball, see \cite[Lemma 5.35]{KLV21}.
\begin{lemma} \label{lem:estimates-p-capacity}
Let $x_o \in \R^n$, $\rho>0$.
Then we have that
$$
	c(n,p) \rho^{n-p}
	\leq
	\ca_p \big(\overline B_{\rho/2}(y),B_{\rho}(y)\big)
	\leq
	c(n) \rho^{n-p}.
$$
\end{lemma}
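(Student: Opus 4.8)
The plan is to reduce everything to the unit configuration by the homogeneity of the variational $p$-capacity, and then to prove the two inequalities separately. Recall that for compact $K\subset\Omega$ with $\Omega\subset\R^n$ open and bounded,
$$
\ca_p(K,\Omega)=\inf\Big\{\int_{\Omega}|D\varphi|^p\,\dx:\ \varphi\in C_0^\infty(\Omega),\ \varphi\ge1\text{ on }K\Big\},
$$
and that Lipschitz competitors may be used in this infimum without changing its value. If $\varphi$ is admissible for $(\overline B_{1/2}(0),B_1(0))$, then $\varphi_{y,\rho}(x):=\varphi\big((x-y)/\rho\big)$ is admissible for $(\overline B_{\rho/2}(y),B_\rho(y))$, and the substitution $x=y+\rho z$ gives $\int_{B_\rho(y)}|D\varphi_{y,\rho}|^p\,\dx=\rho^{\,n-p}\int_{B_1}|D\varphi|^p\,\dx$. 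Passing to the infimum yields
$$
\ca_p\big(\overline B_{\rho/2}(y),B_\rho(y)\big)=\rho^{\,n-p}\,\ca_p\big(\overline B_{1/2}(0),B_1(0)\big),
$$
so it suffices to show that $\ca_p(\overline B_{1/2}(0),B_1(0))$ is a finite positive number depending only on $n$ and $p$.

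For the upper bound I would test with an explicit radial competitor: either the Lipschitz ``tent'' function $\varphi(x):=\min\{1,\max\{0,2(1-|x|)\}\}$, which equals $1$ on $\overline B_{1/2}$, vanishes outside $B_1$ and has $|D\varphi|\le 2$ a.e., or — to obtain the sharp constant — the radial $p$-capacitary potential $u$ on the annulus $B_1\setminus\overline B_{1/2}$, i.e.\ the $p$-harmonic function with $u=1$ on $\partial B_{1/2}$ and $u=0$ on $\partial B_1$, extended by $1$ on $B_{1/2}$ and by $0$ outside $B_1$. Solving the radial equation $(r^{n-1}|u'|^{p-2}u')'=0$ one finds $u(r)=\frac{r^{\gamma}-1}{2^{-\gamma}-1}$ with $\gamma=\frac{p-n}{p-1}$ when $p\ne n$, and $u(r)=\frac{\log(1/r)}{\log 2}$ when $p=n$. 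In either case the competitor is admissible, so $\ca_p(\overline B_{1/2}(0),B_1(0))\le\int_{B_1\setminus B_{1/2}}|D\varphi|^p\,\dx<\infty$; carrying out the elementary radial integral for $u$ gives the explicit value $|\partial B_1|\,\big|\gamma/(2^{-\gamma}-1)\big|^{p-1}$ for $p\ne n$ and $|\partial B_1|\,(\log2)^{1-n}$ for $p=n$.

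For the lower bound, let $\varphi\in C_0^\infty(B_1)$ with $\varphi\ge1$ on $\overline B_{1/2}$. When $p<n$ one argues directly by the Sobolev inequality: $|B_{1/2}|^{(n-p)/(np)}\le\|\varphi\|_{L^{np/(n-p)}(\R^n)}\le c(n,p)\|D\varphi\|_{L^p(\R^n)}$, so taking the infimum gives $\ca_p(\overline B_{1/2}(0),B_1(0))\ge c(n,p)>0$. To treat all exponents $1<p<\infty$ at once — and recover the sharp constant — I would instead compare with the potential $u$ above: by convexity of $\xi\mapsto|\xi|^p$,
$$
\int_{B_1}|D\varphi|^p\,\dx\ \ge\ \int_{B_1\setminus B_{1/2}}|D\varphi|^p\,\dx\ \ge\ \int_{B_1\setminus B_{1/2}}|Du|^p\,\dx+p\int_{B_1\setminus B_{1/2}}|Du|^{p-2}Du\cdot D(\varphi-u)\,\dx,
$$
and integration by parts kills the last integral on the interior of the annulus because $u$ is $p$-harmonic there, while the boundary contribution is nonnegative: it vanishes on $\partial B_1$ (where $\varphi=u=0$), and on $\partial B_{1/2}$ it is the integral of the nonnegative flux of $|Du|^{p-2}Du$ against $\varphi-1\ge0$. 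Hence $\int_{B_1}|D\varphi|^p\,\dx\ge\int_{B_1\setminus B_{1/2}}|Du|^p\,\dx$, which together with the upper bound shows $\ca_p(\overline B_{1/2}(0),B_1(0))=\int_{B_1\setminus B_{1/2}}|Du|^p\,\dx$, the positive finite constant computed above.

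The argument is essentially routine; the only points requiring a little care are (i) the admissibility of Lipschitz and radial competitors in the variational capacity, which is standard, and (ii) the comparison inequality for the $p$-capacitary potential, where one must keep track of the boundary term on $\partial B_{1/2}$ — this is the main (mild) obstacle, and it can be bypassed in the range $p<n$ by the Sobolev argument above. Finally, inserting these constants into the scaling identity of the first paragraph reproduces the asserted two-sided bound $c\,\rho^{\,n-p}\le\ca_p(\overline B_{\rho/2}(y),B_\rho(y))\le c\,\rho^{\,n-p}$ with constants depending only on $n$ and $p$.
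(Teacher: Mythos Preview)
Your argument is correct. The paper does not prove this lemma at all; it merely quotes it from \cite[Lemma~5.35]{KLV21}. What you supply is the standard self-contained proof: reduce to the unit configuration via the dilation homogeneity $\ca_p(\overline B_{\rho/2}(y),B_\rho(y))=\rho^{n-p}\ca_p(\overline B_{1/2},B_1)$, then bound the unit capacity above by an explicit radial competitor and below either via the Sobolev inequality (for $p<n$) or via comparison with the radial $p$-capacitary potential (for all $1<p<\infty$). The sign check for the boundary term in the comparison step is correct: on $\partial B_{1/2}$ the outward normal to the annulus points toward the origin and $u'<0$, so $|Du|^{p-2}Du\cdot\nu>0$, while $\varphi-1\ge0$ there.

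One small remark: any upper bound obtained this way depends on $p$ (through $2^p$ for the tent function, or through the explicit value $|\partial B_1|\,|\gamma/(2^{-\gamma}-1)|^{p-1}$ for the capacitary potential), so the ``$c(n)$'' in the stated upper bound should really read $c(n,p)$. This is a harmless imprecision in the paper's formulation rather than a defect in your proof, and it is irrelevant for the paper's applications, where $p$ is fixed throughout.
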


Next, if a set is uniformly $p$-fat, then it is also uniformly fat with respect to any larger exponent $\vartheta$, see \cite[Remark 6.20]{KLV21}.
In particular, any nonempty closed set $A \subset \R^n$ satisfies the $\vartheta$-fatness condition for any $\vartheta \in (n,\infty)$, see \cite[Lemma 6.19]{KLV21}.
\begin{lemma} \label{lem:fatness-larger-exponents}
If a compact set $A$ is uniformly $p$-fat with fatness constant $\alpha>0$, then $A$ is uniformly $\vartheta$-fat for any $\vartheta \geq p$ with fatness constant $\alpha_\vartheta = \alpha_\vartheta(n,p,\vartheta,\alpha)$.
\end{lemma}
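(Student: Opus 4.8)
\textbf{Proof plan for Lemma \ref{lem:fatness-larger-exponents}.}
The plan is to reduce the claim to a single-scale, single-point comparison of capacities of a fixed ball in a concentric larger ball, where the exponent is varied. Fix $x \in A$ and $\rho > 0$. By Definition \ref{def:unif-p-fat}, uniform $p$-fatness of $A$ with constant $\alpha$ means $\ca_p(A \cap \overline B_\rho(x), B_{2\rho}(x)) \geq \alpha\, \ca_p(\overline B_\rho(x), B_{2\rho}(x))$. We want the analogous bound with $\vartheta$ in place of $p$ and a constant $\alpha_\vartheta$ depending only on $n,p,\vartheta,\alpha$. Since capacities at different scales are related by the scaling $\ca_q(\overline B_{t\rho}(x), B_{2t\rho}(x)) = t^{n-q}\ca_q(\overline B_\rho(x), B_{2\rho}(x))$ and a translation, it suffices to treat the normalized configuration (say $x = 0$, $\rho = 1$), so in fact only the dimensionless ratio of capacities matters; the dependence on $\rho$ drops out. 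The key input is the capacitary strong type / Maz'ya-type inequality — equivalently, the Adams–Hedberg type estimate comparing the $p$-capacity and the $\vartheta$-capacity of the \emph{same} compact set inside a fixed open ball. For $\vartheta \geq p$ and $E \subset \overline B_1$, one has $\ca_\vartheta(E, B_2)^{1/\vartheta} \geq c(n,p,\vartheta)\, \ca_p(E, B_2)^{1/p}$ whenever $\ca_p(E,B_2)$ is below a fixed threshold, and more crudely $\ca_\vartheta(E,B_2) \geq c \,\ca_p(E,B_2)^{\vartheta/p}$ after normalization; combined with the two-sided bound $c(n,q)\rho^{n-q} \leq \ca_q(\overline B_{\rho/2},B_\rho) \leq c(n)\rho^{n-q}$ from Lemma \ref{lem:estimates-p-capacity} (which on the normalized ball says both $\ca_p(\overline B_1,B_2)$ and $\ca_\vartheta(\overline B_1,B_2)$ are comparable to dimensional constants), this yields exactly the required lower bound on $\ca_\vartheta(A \cap \overline B_1, B_2)$ in terms of a constant depending on $n,p,\vartheta,\alpha$.

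Concretely, the steps I would carry out are: (i) reduce to $x=0$, $\rho=1$ by translation and the homogeneity of variational capacity under dilations; (ii) recall from Lemma \ref{lem:estimates-p-capacity} that $\ca_p(\overline B_1, B_2)$ and $\ca_\vartheta(\overline B_1, B_2)$ are both bounded above and below by constants depending only on $n$ (and $p$, resp.\ $\vartheta$), so that the $p$-fatness hypothesis reads $\ca_p(A \cap \overline B_1, B_2) \geq \alpha' := \alpha\, \ca_p(\overline B_1, B_2) > 0$, a fixed positive number; (iii) invoke the Maz'ya/Adams inequality comparing $\ca_p$ and $\ca_\vartheta$ of a compact subset of $\overline B_1$ in the larger ball $B_2$: since $\vartheta \geq p$, a set with $p$-capacity bounded below also has $\vartheta$-capacity bounded below, quantitatively, by a constant $c(n,p,\vartheta,\alpha')$; (iv) divide by $\ca_\vartheta(\overline B_1, B_2)$ (again a dimensional constant) to produce the $\vartheta$-fatness constant $\alpha_\vartheta = \alpha_\vartheta(n,p,\vartheta,\alpha)$; (v) undo the normalization — the scaling factors $\rho^{n-p}$ and $\rho^{n-\vartheta}$ cancel in the ratio, so the bound holds at every scale $\rho > 0$ and every $x \in A$ with the same constant. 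Finally I would note that the parenthetical about closed sets being $\vartheta$-fat for $\vartheta > n$ is immediate, since then single points have positive $\vartheta$-capacity and the comparison above with $p$ replaced by any exponent $\leq \vartheta$ still applies.

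The main obstacle is step (iii): making the comparison between $\ca_p$ and $\ca_\vartheta$ of an arbitrary compact set precise and quantitative. The naive pointwise inequality between capacities fails in the wrong direction in general, and one genuinely needs a capacitary/potential-theoretic inequality (e.g.\ via the equivalence of variational capacity with Hausdorff content or via Wolff potentials, or directly Maz'ya's inequality $\ca_\vartheta(E) \gtrsim \ca_p(E)^{\vartheta/p}$ for $E$ of small $p$-capacity in a fixed ball). Since this is precisely the content cited from \cite[Remark 6.20]{KLV21} and goes back to Lewis \cite{Lewis}, I would either cite it directly or reproduce the short argument: test the $\vartheta$-capacity with a truncation/rearrangement of the $p$-capacitary potential of $A \cap \overline B_1$ and use Hölder's inequality on $B_2$ to absorb the extra integrability, exploiting that the domain $B_2$ has finite measure so that $L^\vartheta(B_2) \hookrightarrow L^p(B_2)$ for $\vartheta \geq p$. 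The remaining steps are routine bookkeeping with the scaling of capacity and the two-sided estimates already recorded in Lemma \ref{lem:estimates-p-capacity}.
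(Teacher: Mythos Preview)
The paper does not prove this lemma; it simply cites \cite[Remark 6.20]{KLV21}. Your outline (i)--(v) and the key inequality $\ca_\vartheta(E,B_2)\ge c\,\ca_p(E,B_2)^{\vartheta/p}$ (on the normalized ball) are correct and lead to the result once combined with Lemma~\ref{lem:estimates-p-capacity} and the scaling of variational capacity.

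However, you overcomplicate step (iii) and the concrete implementation you propose in the last paragraph is backwards. You suggest ``test the $\vartheta$-capacity with \ldots\ the $p$-capacitary potential'', but the $p$-capacitary potential lies only in $W^{1,p}_0(B_2)$ and is in general \emph{not} admissible for $\ca_\vartheta$ when $\vartheta>p$. The correct (and completely elementary) argument goes the other way: every function $u$ admissible for $\ca_\vartheta(E,B_2)$ satisfies $u\in W^{1,\vartheta}_0(B_2)\subset W^{1,p}_0(B_2)$, hence is admissible for $\ca_p(E,B_2)$, and H\"older's inequality on the bounded set $B_2$ gives
\[
\int_{B_2}|Du|^p\,\dx \le |B_2|^{1-p/\vartheta}\bigg(\int_{B_2}|Du|^\vartheta\,\dx\bigg)^{p/\vartheta}.
\]
Taking the infimum over such $u$ yields $\ca_p(E,B_2)\le |B_2|^{1-p/\vartheta}\ca_\vartheta(E,B_2)^{p/\vartheta}$, which rearranges to the inequality you need. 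No Wolff potentials, Hausdorff content, or ``truncation/rearrangement'' are required, and the statement ``the naive pointwise inequality between capacities fails in the wrong direction'' is misleading here: the naive H\"older comparison goes exactly in the right direction. Finally, the attribution to Lewis \cite{Lewis} is misplaced: Lewis's contribution is the deep self-improving property to \emph{smaller} exponents (Theorem~\ref{thm:fatness-improvement}); passing to larger exponents is the trivial direction.
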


In contrast, the uniform $p$-fatness of a closed set $A \subset \R^n$ does not imply the uniform fatness with respect to arbitrary smaller exponents.
However, we have the following self-improving property, which was first obtained by Lewis \cite{Lewis} in the more general context of Riesz potentials.
For a direct proof of the following statement, see \cite[Theorem 7.21]{KLV21}.
\begin{theorem} \label{thm:fatness-improvement}
Let $1<p<\infty$. If a set $A \subset \R^n$ is uniformly $p$-fat with fatness constant $\alpha>0$, then there exists $\gamma=\gamma(n,p,\alpha)\in (1,p)$ for which $A$ is uniformly $\gamma$-fat with fatness constant $\alpha_\gamma = \alpha_\gamma(n,p,\alpha)$.
\end{theorem}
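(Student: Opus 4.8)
\textbf{Plan for Theorem \ref{thm:fatness-improvement} (self-improvement of $p$-fatness).}
The overall strategy is the one originating with Lewis and reproduced in \cite[Theorem 7.21]{KLV21}: one does not try to reduce the exponent in the variational capacity directly, but instead passes through a pointwise Wolff-potential reformulation of fatness, proves a Hardy-type / weak reverse Hölder bound for the relevant nonlinear potential, and then reads off that the Wolff-potential criterion persists for a slightly smaller exponent $\gamma \in (1,p)$. Let me describe the steps in the order I would carry them out.

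\emph{Step 1: Reformulate $p$-fatness via the Wolff potential of the restricted measure.} For a compact set $A \subset \R^n$ and a ball $B_{2\rho}(x)$, recall the dual description of the variational $p$-capacity through the $p$-equilibrium measure $\mu$ of $K := A \cap \overline B_\rho(x)$ in $B_{2\rho}(x)$: one has $\mu(K) = \ca_p(K, B_{2\rho}(x))$ and a pointwise two-sided Wolff-potential estimate of the equilibrium potential, so that uniform $p$-fatness is equivalent to the statement that $\mathbf W^\mu_{1,p}(x) := \int_0^{2\rho}\big(\mu(B_t(x))\,t^{p-n}\big)^{1/(p-1)}\,\tfrac{dt}{t}$ is bounded below by a positive constant (after the natural normalization $\mu(K)\rho^{p-n}\gtrsim 1$). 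This is the standard translation, and I would quote it from the nonlinear potential theory references \cite{Heinonen-etal-book,KLV21} rather than reprove it.

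\emph{Step 2: A quantitative iteration / good-$\lambda$ bound for the Wolff potential.} The heart of the matter: starting from the fatness hypothesis at \emph{every} scale and \emph{every} center in $A$, derive a quantitative estimate saying that the measure $\mu$ (equivalently the density profile $t \mapsto \mu(B_t(x))t^{p-n}$) cannot be too concentrated. Concretely, one shows that there is $\delta = \delta(n,p,\alpha) > 0$ and $C = C(n,p,\alpha)$ such that for all $x\in A$ and all $0 < r < \rho$,
$$
  \mu\big(B_r(x)\big)\,r^{p-n} \;\le\; C\left(\frac{r}{\rho}\right)^{\delta}\,\mu\big(B_\rho(x)\big)\,\rho^{p-n},
$$
i.e. a genuine decay of the normalized density as the radius shrinks. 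This is proved by an iteration on dyadic scales: at each scale the fatness condition forces a fixed fraction of the capacity to live in the annulus rather than in the inner ball, which gives a uniform multiplicative gain per dyadic step. This is precisely the step where $p$-fatness is used in an essential, quantitative way.

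\emph{Step 3: Upgrade the decay to fatness for a smaller exponent.} With the decay estimate from Step 2 in hand, fix $\gamma \in (1,p)$ close enough to $p$ that $\tfrac{1}{\gamma-1} > \tfrac{1}{p-1}$ is still compensated by the geometric decay with rate $\delta$; then the Wolff-potential integrand for exponent $\gamma$, namely $\big(\mu(B_t(x))\,t^{\gamma-n}\big)^{1/(\gamma-1)}$, is still controlled (up to constants depending on $n,p,\alpha$) by a summable geometric series, so $\mathbf W^{\mu}_{1,\gamma}(x)$ is comparable to the normalized mass. Translating back through Step 1 — again invoking the Wolff-potential characterization, now for the exponent $\gamma$ — yields the lower capacity bound $\ca_\gamma\big(A\cap \overline B_\rho(x), B_{2\rho}(x)\big) \ge \alpha_\gamma\, \ca_\gamma\big(\overline B_\rho(x), B_{2\rho}(x)\big)$ for every $x\in A$ and $\rho > 0$, with $\alpha_\gamma = \alpha_\gamma(n,p,\alpha)$, which is exactly the assertion. (One also uses Lemma \ref{lem:estimates-p-capacity}, with $p$ replaced by $\gamma$, to identify $\ca_\gamma(\overline B_\rho, B_{2\rho}) \approx \rho^{n-\gamma}$.)

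\emph{Main obstacle.} The genuinely hard part is Step 2: extracting a \emph{uniform} (scale- and center-independent) decay rate $\delta$ from the fatness hypothesis. Naively, fatness at one scale only tells you about one annulus, and the equilibrium measures at different scales are different measures, so one must argue carefully that the obstruction to concentration compounds multiplicatively rather than merely additively — this typically requires comparing the equilibrium potentials at successive scales via the capacitary strong-type / Maz'ya-type inequality and a Caccioppoli-type estimate for $p$-superharmonic functions. Since a full, self-contained treatment would essentially reproduce \cite[Chapter 7]{KLV21}, I would state the decay estimate as the key lemma, give the dyadic-iteration argument for it in outline, and cite \cite{Lewis} and \cite[Theorem 7.21]{KLV21} for the complete details.
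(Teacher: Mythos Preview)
The paper does not give its own proof of this theorem; it states the result and cites \cite{Lewis} and \cite[Theorem~7.21]{KLV21} for the argument. Your outline is precisely the Lewis/Wolff-potential strategy from those references, so there is nothing to compare: you are sketching the very proof the paper defers to.
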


The following Maz'ya type inequality links the notion of capacity to the boundary Poincaré inequality.
For its proof, we refer to \cite[Chapter 10]{Mazya-book}, see also \cite[Theorem 5.47]{KLV21}.
Note that the quasicontinuity of the chosen representative (see e.g.~\cite[Definition 5.12]{KLV21} for the notion of $p$-quasicontinuity) cannot be omitted, since the claim is false for arbitrary Sobolev functions.
However, every Sobolev function has a quasicontinuous representative, see \cite[Theorem 5.14]{KLV21}.
\begin{lemma} \label{lem:mazya-sobolev}
Let $B_\rho(x_o)$ be a ball in $\R^n$ and fix a $q$-quasicontinuous
representative of $u \in W^{1,q}(B_\rho(x_o))$.  Denote 
$$
N_{B_{\rho/2}(x_o)}(u):=\{x\in \overline B_{\rho/2}(x_o):u(x)=0\}.
$$
Then, for $\tilde q \in[q,q^\ast]$ with $q^\ast=\frac{nq}{n-q}$ there exists a constant $c=c(n,q)>0$ such that
$$
\left(\mint_{B_\rho(x_o)} |u|^{\tilde q} \d x \right)^{\frac{1}{\tilde q}}\leq  \left( \frac{c}{\ca_q(N_{B_{\rho/2}(x_o)}(u),B_\rho(x_o))} \int_{B_\rho(x_o)} |Du|^q \d x\right)^{\frac 1 q}.
$$
\end{lemma}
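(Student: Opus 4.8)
\textbf{Plan of proof for Lemma~\ref{lem:mazya-sobolew} (the Maz'ya--Sobolev inequality).}
This is a classical inequality and I will only sketch the standard route; the full argument is in \cite[Chapter 10]{Mazya-book} or \cite[Theorem 5.47]{KLV21}. Throughout, fix a $q$-quasicontinuous representative of $u$ and abbreviate $B=B_\rho(x_o)$, $B'=\overline B_{\rho/2}(x_o)$, and $N=N_{B'}(u)=\{x\in B':u(x)=0\}$. The case $N=\varnothing$ is trivial (the right-hand side is $+\infty$ by convention, or rather the statement is vacuous since then the capacity in the denominator is that of the empty set which we read as giving an infinite bound), so we may assume $N\neq\varnothing$, and after a scaling reduction we may take $\rho=1$; the general case follows by the scaling relations for capacity and for the two integrals (this is where the exponents $n-q$ in Lemma~\ref{lem:estimates-p-capacity} and the homogeneities of $\int|u|^{\tilde q}$ and $\int|Du|^q$ match up, using $\tilde q\le q^\ast$).

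\textbf{Main steps.}
First, I would establish the endpoint case $\tilde q=q$ via the capacitary characterization of the Poincar\'e/Friedrichs constant: for any $v\in W^{1,q}(B)$ that vanishes $q$-q.e.\ on a set $K\subset B'$ one has
$$
\int_B |v|^q\,\d x \le \frac{c(n,q)}{\ca_q(K,B)}\int_B |Du|^q\,\d x.
$$
The cleanest derivation is by contradiction and compactness: if this failed there would be a sequence $v_j$ with $\int_B|v_j|^q=1$, $\int_B|Dv_j|^q\to0$, each vanishing q.e.\ on sets $K_j$ with $\ca_q(K_j,B)\ge\delta>0$; by Rellich a subsequence converges in $L^q(B)$ and weakly in $W^{1,q}$ to a constant $v_\infty$ with $\int_B|v_\infty|^q=1$, hence $v_\infty\equiv c\ne0$; but the sets $\{v_j=0\}$ having capacity bounded below forces, via lower semicontinuity of capacity under the convergence and the fact that a nonzero constant vanishes nowhere, $\ca_q(\cdot)\to0$, a contradiction. (Alternatively one invokes the direct Maz'ya estimate comparing the Dirichlet integral with a capacity.) Applying this with $v=u$ and $K=N$ gives the claim for $\tilde q=q$.

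Second, to reach $\tilde q\in(q,q^\ast]$ I would interpolate the endpoint bound with the Sobolev inequality on the ball. Writing $s=\int_B|Du|^q\,\d x$, the Sobolev--Poincar\'e inequality on $B$ applied to $u-(u)_{x_o,1}$ together with the $L^q$ bound just obtained yields $\|u\|_{L^{q^\ast}(B)}\le c\,(s/\ca_q(N,B))^{1/q}$ as well — here one uses that $|(u)_{x_o,1}|^q\le c\fint_B|u|^q\le c\,s/\ca_q(N,B)$ by the endpoint case, and that $\ca_q(N,B)\le\ca_q(B',B)\le c(n)$ so the capacity factor only helps. Then for $q<\tilde q<q^\ast$ one interpolates $\|u\|_{L^{\tilde q}}\le\|u\|_{L^q}^{1-\theta}\|u\|_{L^{q^\ast}}^{\theta}$ and both factors are controlled by $(s/\ca_q(N,B))^{1/q}$, giving the stated inequality; finally $\tilde q=q^\ast$ is the Sobolev endpoint itself. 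Rescaling back to general $\rho$ completes the proof.

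\textbf{Main obstacle.} The only genuinely delicate point is the first step: justifying the capacitary Poincar\'e inequality for the quasicontinuous representative, i.e.\ that vanishing $q$-quasi-everywhere on a set of positive $q$-capacity gives quantitative control with the capacity in the denominator, and that this is precisely why quasicontinuity cannot be dropped (a redefinition on a null set could destroy the bound). Handling the lower semicontinuity of $\ca_q$ along the approximating sequence in the compactness argument — or, equivalently, quoting the sharp Maz'ya inequality directly — is the crux; everything afterward is elementary interpolation and scaling. Since this is standard material I would simply cite \cite[Chapter 10]{Mazya-book} and \cite[Theorem 5.47]{KLV21} for the endpoint case and carry out only the interpolation explicitly.
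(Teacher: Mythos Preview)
The paper does not prove this lemma at all: it simply cites \cite[Chapter~10]{Mazya-book} and \cite[Theorem~5.47]{KLV21} and moves on. Since you ultimately propose to do the same, your approach matches the paper's.

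That said, the compactness sketch you offer for the endpoint $\tilde q=q$ has a genuine gap. Negating the inequality
\[
\ca_q(K,B)\int_B|v|^q\,\d x \le c\int_B|Dv|^q\,\d x
\]
produces a sequence $(v_j,K_j)$ with $\int_B|v_j|^q=1$ and $\int_B|Dv_j|^q<\ca_q(K_j,B)/j$, but it does \emph{not} give you a uniform lower bound $\ca_q(K_j,B)\ge\delta>0$; you inserted that assumption without justification. If $\ca_q(K_j,B)\to0$ along the sequence, then $v_j\to\mathrm{const}\neq0$ and $\ca_q(K_j,B)\to0$ are perfectly compatible and no contradiction arises. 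Compactness arguments of this type yield only a qualitative statement of the form ``for every $\delta>0$ there exists $c(\delta)$ such that $\ca_q(K,B)\ge\delta$ implies \ldots'', not the sharp $1/\ca_q$ dependence. You correctly hedge that ``alternatively one invokes the direct Maz'ya estimate''; that direct route (via truncation and the variational definition of capacity) is in fact the substance of the cited references and is what is actually required here. Your interpolation step to reach $\tilde q\in(q,q^\ast]$ is fine.
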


At this point, we recall the Gagliardo--Nirenberg inequality, see \cite{Nirenberg}.
We give the statement in the following form that is useful for our purposes.
\begin{lemma} \label{lem:GN}
Let $1 \leq \sigma,q, r < \infty$ and $\theta \in (0,1)$ such that $-\frac{n}{\sigma} \leq \theta (1- \frac{n}{q}) - (1-\theta) \frac{n}{r}$. Then, there exists a constant $c = c(n,\sigma)$ such that for any ball $B_\rho(x_o) \subset \R^n$ with $\rho > 0$ and any function $u \in W^{1,q}(B_\rho(x_o))$ we have
$$
\bint_{B_\rho(x_o)} \frac{|u|^\sigma}{\rho^\sigma} \, \d x \leq c \left[ \bint_{B_\rho(x_o)} \left( \frac{|u|^q}{\rho^q} + |Du|^q \right) \, \d x \right]^\frac{\theta \sigma}{q} \left[ \bint_{B_\rho(x_o)} \frac{|u|^r}{\rho^r} \, \d x \right]^\frac{(1-\theta) \sigma}{r}.
$$
\end{lemma}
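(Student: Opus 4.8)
The plan is to reduce the estimate to the unit ball by an affine change of variables and then to recognize the resulting inequality as the Gagliardo--Nirenberg interpolation inequality on a bounded Lipschitz domain.

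If the right-hand side is infinite there is nothing to prove, so we may assume $u\in W^{1,q}(B_\rho(x_o))\cap L^r(B_\rho(x_o))$. For $y\in B_1(0)$ set $v(y):=u(x_o+\rho y)$, so that $v\in W^{1,q}(B_1)\cap L^r(B_1)$ and $Dv(y)=\rho\,(Du)(x_o+\rho y)$. Since integral averages are invariant under this change of variables, a direct computation gives
$$
\bint_{B_\rho(x_o)}\frac{|u|^\sigma}{\rho^\sigma}\,\d x=\frac{1}{\rho^\sigma}\bint_{B_1}|v|^\sigma\,\d y,\qquad
\bint_{B_\rho(x_o)}\frac{|u|^r}{\rho^r}\,\d x=\frac{1}{\rho^r}\bint_{B_1}|v|^r\,\d y,
$$
$$
\bint_{B_\rho(x_o)}\Big(\frac{|u|^q}{\rho^q}+|Du|^q\Big)\,\d x=\frac{1}{\rho^q}\bint_{B_1}\big(|v|^q+|Dv|^q\big)\,\d y.
$$
Inserting these identities, the powers of $\rho$ on the right-hand side of the claimed inequality combine to $\rho^{-q\cdot\frac{\theta\sigma}{q}}\,\rho^{-r\cdot\frac{(1-\theta)\sigma}{r}}=\rho^{-\sigma}$, which cancels the factor $\rho^{-\sigma}$ on the left; hence it suffices to treat the case $\rho=1$. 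Replacing the averages over $B_1$ by ordinary integrals (absorbing the harmless powers of $|B_1|$ into the constant) and using that $\big(\|v\|_{L^q(B_1)}^q+\|Dv\|_{L^q(B_1)}^q\big)^{1/q}$ is comparable to $\|v\|_{W^{1,q}(B_1)}$, the claim for $\rho=1$ is equivalent to the Gagliardo--Nirenberg estimate
$$
\|v\|_{L^\sigma(B_1)}\le c\,\|v\|_{W^{1,q}(B_1)}^{\theta}\,\|v\|_{L^r(B_1)}^{1-\theta}.
$$

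To establish this estimate on $B_1$, I would first observe that, dividing the hypothesis $-\frac n\sigma\le\theta\big(1-\frac nq\big)-(1-\theta)\frac nr$ by $-n$, it becomes the sharp balance relation $\frac1\sigma\ge\theta\big(\frac1q-\frac1n\big)+(1-\theta)\frac1r$; let $\sigma_0\ge\sigma$ denote the exponent defined by equality here. In the subcritical regime $q<n$ the Sobolev embedding gives $\|v\|_{L^{q^\ast}(B_1)}\le c\,\|v\|_{W^{1,q}(B_1)}$ with $q^\ast=\frac{nq}{n-q}$, so $\frac1{\sigma_0}=\frac\theta{q^\ast}+\frac{1-\theta}r$ and the elementary interpolation inequality for $L^p$-norms yields $\|v\|_{L^{\sigma_0}(B_1)}\le\|v\|_{L^{q^\ast}(B_1)}^\theta\,\|v\|_{L^r(B_1)}^{1-\theta}$; combining this with $\|v\|_{L^\sigma(B_1)}\le|B_1|^{\frac1\sigma-\frac1{\sigma_0}}\|v\|_{L^{\sigma_0}(B_1)}$ (valid since $\sigma\le\sigma_0$) proves the claim. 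The case $q=n$ is the same with $q^\ast$ replaced by a sufficiently large finite exponent, and in the remaining borderline and supercritical configurations, where this soft chain no longer closes, one invokes the full Gagliardo--Nirenberg theorem on the Lipschitz domain $B_1$, see \cite{Nirenberg}. Undoing the rescaling then yields the assertion. The only point that requires genuine care is precisely this last case $q\ge n$: there the plain combination of Sobolev embedding and $L^p$-interpolation is not conclusive, and one has to rely on Nirenberg's theorem, whose proof supplies the missing analytic input; the scaling bookkeeping and the interpolation chain themselves are routine.
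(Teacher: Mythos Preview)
The paper does not actually prove this lemma; it merely states the inequality in a convenient scale-invariant form and cites Nirenberg~\cite{Nirenberg}. Your proposal supplies precisely the standard derivation that the paper omits: an affine rescaling to $B_1$ (which you carry out correctly, with the powers of~$\rho$ cancelling as they should) followed by the classical Gagliardo--Nirenberg inequality on the bounded Lipschitz domain~$B_1$. Your soft chain ``Sobolev embedding $+$ $L^p$-interpolation $+$ H\"older on a bounded domain'' cleanly handles the subcritical case $q<n$, and you are right that the borderline and supercritical cases ultimately require Nirenberg's full theorem; this is exactly where the paper's citation points as well. So your approach is correct and coincides in substance with the paper's (implicit) one.

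One minor remark: the constant in the paper is written as $c=c(n,\sigma)$, but your argument---and indeed any standard argument---yields a constant depending also on $q$, $r$, and $\theta$. This is an imprecision in the paper's statement rather than a defect of your proof.
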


The next iteration lemma follows from \cite[Lemma 6.1]{Giusti:book}.
\begin{lemma}
\label{lem:iteration} 
Let $0<\theta<1$, $A,C\geq 0$ and $\beta>0$. Then, there
exists a constant $c=c(\beta,\theta)$ such that there holds: For any $0<r<\rho$ and any non-negative bounded function $\phi\colon [r,\rho] \to \R_{\geq 0}$ satisfying
$$
\phi(t) \leq \theta \phi(s) +A(s-t)^{-\beta}+C \quad \text{ for all } r\leq t <s \leq \rho,
$$
we have
$$
\phi(r) \leq c \left[A(\rho-r)^{-\beta}+C \right].
$$
\end{lemma}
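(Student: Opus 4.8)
The plan is to iterate the assumed inequality along a geometric sequence of radii — the standard \emph{hole-filling} argument. First I would fix a parameter $\tau\in(0,1)$ chosen so that $\theta\,\tau^{-\beta}<1$; concretely one may take $\tau:=\big(\tfrac{1+\theta}{2}\big)^{1/\beta}$, which gives $\theta\,\tau^{-\beta}=\tfrac{2\theta}{1+\theta}<1$ and depends only on $\theta$ and $\beta$. Then I would set $r_i:=r+(1-\tau^i)(\rho-r)$ for $i=0,1,2,\dots$, so that $r_0=r$, $r_i\uparrow\rho$, all $r_i$ lie in $[r,\rho]$, and $r_{i+1}-r_i=\tau^i(1-\tau)(\rho-r)$.

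Next I would apply the hypothesis to each pair $r\le r_i<r_{i+1}\le\rho$, obtaining
$$
\phi(r_i)\le\theta\,\phi(r_{i+1})+A\,\tau^{-i\beta}(1-\tau)^{-\beta}(\rho-r)^{-\beta}+C .
$$
Iterating this $k$ times and using $\phi\ge0$ yields
$$
\phi(r)=\phi(r_0)\le\theta^k\phi(r_k)+\sum_{i=0}^{k-1}\theta^i\Big(A\,\tau^{-i\beta}(1-\tau)^{-\beta}(\rho-r)^{-\beta}+C\Big).
$$

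Finally I would pass to the limit $k\to\infty$. Since $\phi$ is bounded and $0<\theta<1$, the remainder $\theta^k\phi(r_k)$ tends to $0$; and by the choice of $\tau$ the two geometric series $\sum_{i\ge0}(\theta\,\tau^{-\beta})^i$ and $\sum_{i\ge0}\theta^i$ converge, to $(1-\theta\,\tau^{-\beta})^{-1}$ and $(1-\theta)^{-1}$ respectively. This gives
$$
\phi(r)\le\frac{(1-\tau)^{-\beta}}{1-\theta\,\tau^{-\beta}}\,A(\rho-r)^{-\beta}+\frac{1}{1-\theta}\,C ,
$$
and the assertion follows with $c:=\max\big\{(1-\tau)^{-\beta}(1-\theta\,\tau^{-\beta})^{-1},\,(1-\theta)^{-1}\big\}$, a constant depending only on $\beta$ and $\theta$. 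There is no genuine obstacle here; the only points that need care are choosing $\tau$ in the admissible window $(\theta^{1/\beta},1)$ so that the resummed series remains finite, and invoking the boundedness of $\phi$ to discard the remainder term $\theta^k\phi(r_k)$ — this hypothesis is essential, as the conclusion may fail without it.
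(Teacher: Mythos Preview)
Your argument is correct and is precisely the standard hole-filling iteration; the paper does not give its own proof but merely cites \cite[Lemma~6.1]{Giusti:book}, whose proof is exactly the geometric-sequence iteration you have written out. There is nothing to add.
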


\subsection{Integration by parts formula and density of smooth functions}
First, we recall the following density result from~\cite[Proposition 3.19]{SSSS-nondecreasing}.

\begin{lemma} \label{lem:c0infty-v2p-density}
Suppose that $1 < p < \infty$, $E$ satisfies~\eqref{assumption:two-sided-growth} and $\R^n \setminus E^t$ is uniformly $p$-fat, according to Definition~\ref{def:unif-p-fat}, for every $t \in [0,T)$. Then $C_0^\infty(E, \R^N)$ is dense in $\mathcal{V}^{p,0}(E)$ and $\mathcal{V}_2^{p,0}(E)$, respectively.
\end{lemma}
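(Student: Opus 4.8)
The plan is to approximate a given $u\in\mathcal{V}^{p,0}(E)$ (resp.\ $u\in\mathcal{V}_2^{p,0}(E)$) in the $V^p(E)$-norm (resp.\ the $V_2^p(E)$-norm) by functions in $C_0^\infty(E,\R^N)$ through three successive regularizations: a cut-off in time, a shrinking of the spatial support away from the lateral boundary, and a space-time mollification. Throughout, only $1<p<\infty$ and the two-sided condition \eqref{assumption:two-sided-growth} (in the weak form "$\mathbf e^c(E^s,E^t)\le M|t-s|^\beta$" with \emph{some} $\beta\in(0,1]$, $M\ge1$) will be used; neither the sharper lower bound on $p$ nor the precise range \eqref{eq:beta-range} is needed here. \textbf{Step 1 (cut-off in time).} Since $\mathrm D$ denotes the spatial gradient, multiplying $u$ by $\eta_\delta\in C_0^\infty((0,T))$ with $0\le\eta_\delta\le1$ and $\eta_\delta\equiv1$ on $(\delta,T-\delta)$ gives $\mathrm D(\eta_\delta u)=\eta_\delta\,\mathrm Du$, and $\eta_\delta u(t)\in W^{1,p}_0(E^t)$ is preserved; by dominated convergence $\eta_\delta u\to u$ in $V^p(E)$, in $L^{\hat p}(E)$ and in $L^2(E)$ as $\delta\to0$. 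Hence we may assume $\spt u\subset\R^n\times[\delta,T-\delta]$ for some $\delta>0$.

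\textbf{Step 2 (shrinking the spatial support).} Uniform $p$-fatness of $\R^n\setminus E^t$ with the common constant $\alpha$, combined with the self-improvement Theorem~\ref{thm:fatness-improvement} and the Maz'ya--Sobolev inequality Lemma~\ref{lem:mazya-sobolev} applied on a Whitney cover of $E^t$, yields a Hardy inequality
\[
\int_{E^t}\Big(\frac{|v|}{d_t}\Big)^p\,\d x\le c(n,p,\alpha)\int_{E^t}|\mathrm Dv|^p\,\d x\qquad\text{for all }v\in W^{1,p}_0(E^t),
\]
where $d_t(x):=\dist(x,\R^n\setminus E^t)$ and the constant is \emph{independent of $t$} (this is classical; the point is that the $t$-independence is exactly what a common fatness constant provides). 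Fix a regularized distance $\widetilde d_t$ with $\tfrac12 d_t\le\widetilde d_t\le2d_t$, $|\mathrm D\widetilde d_t|\le c(n)$, constructed to depend measurably on $t$, and $\phi\in C^\infty(\R)$ with $\phi\equiv0$ on $(-\infty,1]$, $\phi\equiv1$ on $[2,\infty)$, $0\le\phi'\le2$. Put $w_\lambda(x,t):=\phi\big(\widetilde d_t(x)/\lambda\big)\,u(x,t)$. Then $\spt w_\lambda(\cdot,t)\subset\{\widetilde d_t\ge\lambda\}\subset\{d_t\ge\lambda/2\}$, the bounds $|u-w_\lambda|\le|u|\,\1_{\{d_t<4\lambda\}}$ and $|\mathrm D(u-w_\lambda)|\le|\mathrm Du|\,\1_{\{d_t<4\lambda\}}+c(n)(|u|/d_t)\,\1_{\{d_t<4\lambda\}}$ together with the Hardy inequality and dominated convergence give $w_\lambda\to u$ in $V^p(E)$, $L^{\hat p}(E)$ and $L^2(E)$ as $\lambda\to0$. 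Moreover the estimate $|d_t(x)-d_{t'}(x)|\le M|t-t'|^\beta$ (immediate from \eqref{assumption:two-sided-growth}) shows that $(x,t)\mapsto d_t(x)$ is continuous, so $\{(x,t):t\in[\delta,T-\delta],\,d_t(x)\ge\lambda/2\}$ is compact and, since $d_t(x)>0\Rightarrow x\in E^t$, contained in $E$; thus $\spt w_\lambda\Subset E$.

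\textbf{Step 3 (mollification).} Extend $w_\lambda$ by zero to $\R^{n+1}$ and set $w_\lambda^h:=w_\lambda*\rho_h$ with a standard space-time mollifier of radius $h$. If $(x,t)\in\spt w_\lambda^h$ there is $(x',t')\in\spt w_\lambda$ with $|x-x'|,|t-t'|\le h$, hence $d_{t'}(x')\ge\lambda/2$, so $d_{t'}(x)\ge\lambda/2-h$; and $|d_t(x)-d_{t'}(x)|\le Mh^\beta$ from \eqref{assumption:two-sided-growth} yields $d_t(x)\ge\lambda/2-h-Mh^\beta>0$ once $h+Mh^\beta\le\lambda/4$. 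Together with $t\in[\delta/2,T-\delta/2]$ this gives $\spt w_\lambda^h\Subset E$, so $w_\lambda^h\in C_0^\infty(E,\R^N)$; and $w_\lambda^h\to w_\lambda$ in $V^p(E)$, $L^{\hat p}(E)$, $L^2(E)$ as $h\to0$ by standard mollification properties (using $w_\lambda,\mathrm Dw_\lambda\in L^p$ with compact support). A diagonal argument (first $\delta\to0$, then $\lambda\to0$, then $h\to0$) concludes; the $\mathcal{V}_2^{p,0}(E)$ case is identical, the $L^{\hat p}$-component being controlled at every step in the same way.

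\textbf{Main obstacle.} The delicate point is Step 2: one must carry out the Whitney-decomposition derivation of Hardy's inequality from $p$-fatness carefully enough to see that the resulting constant depends only on $n,p,\alpha$ and on no further feature of $E^t$ — this is what forces the hypothesis of a \emph{common} fatness constant for all time slices — and one must deal with the mere Lipschitz (in $x$) regularity of $x\mapsto\dist(x,\R^n\setminus E^t)$ through the regularized distance. The quantitative interplay in Step 3 between the spatial safety margin $\lambda$ and the mollification radius $h$, governed by \eqref{assumption:two-sided-growth}, is where the noncylindrical geometry genuinely enters, but it is elementary once the order of the three limits is fixed.
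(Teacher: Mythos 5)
Your proposal is essentially correct, but it necessarily takes a different route from the paper, because the paper does not prove this lemma at all: it imports it from \cite[Proposition 3.19]{SSSS-nondecreasing}, and the remark following the lemma even points out that the result there holds under a weaker hypothesis (that $E$ does not shrink discontinuously) than \eqref{assumption:two-sided-growth}. Your self-contained scheme — time cut-off, slicewise spatial cut-off at scale $\lambda$ measured by $d_t(x)=\dist(x,\R^n\setminus E^t)$, then space--time mollification — is sound, and it isolates the two structural ingredients correctly: uniform $p$-fatness of $\R^n\setminus E^t$ with a $t$-independent constant $\alpha$ gives the slicewise $p$-Hardy inequality with $c=c(n,p,\alpha)$, which makes $(|u|/d_t)\1_E\in L^p(E)$ and lets dominated convergence kill the cut-off error; and \eqref{assumption:two-sided-growth} yields $|d_t(x)-d_s(x)|\le M|t-s|^\beta$ (both directions of the excess are controlled since the condition is imposed for all $s,t$), which is exactly what makes $\{d_t\ge\lambda/2,\ t\in[\delta,T-\delta]\}$ a compact subset of $E$ and keeps $\spt w_\lambda^h$ inside $E$ once $h+Mh^\beta\le\lambda/4$. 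Two points should be tightened rather than fixed. First, the Hardy inequality is the real content of your Step 2; your sketch via Theorem \ref{thm:fatness-improvement} and Lemma \ref{lem:mazya-sobolev} on a Whitney cover is the standard proof, but the summation over Whitney cubes does not close without the self-improved exponent and a maximal-function/absorption argument, so it would be cleaner to quote the classical theorem (Lewis; see also \cite{KLV21}) with the stated dependence $c(n,p,\alpha)$ — note also that a \emph{common} fatness constant for all time slices is genuinely needed here, consistent with the hypotheses of Theorem \ref{thm:main}. Second, the regularized distance $\widetilde d_t$ is an unnecessary complication: $d_t$ itself is $1$-Lipschitz in $x$ and jointly continuous in $(x,t)$, so $\phi(d_t/\lambda)$ is already an admissible Lipschitz cut-off and the measurability-in-$t$ issue disappears; likewise the closing ``diagonal argument'' should be phrased as a nested choice (given $\varepsilon$, choose $\delta$, then $\lambda$ depending on $\delta$, then $h$ depending on $\delta,\lambda$). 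With these adjustments the argument proves density in both $\mathcal{V}^{p,0}(E)$ and $\mathcal{V}^{p,0}_2(E)$, the $L^{\hat p}$- and $L^2$-components being handled by dominated convergence exactly as you indicate.
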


\begin{remark}
Observe that~\eqref{assumption:two-sided-growth} is not the weakest possible condition under which Lemma~\ref{lem:c0infty-v2p-density} holds true. It suffices to assume that the domain $E$ does not shrink in a discontinuous way (see~\cite[(3.17)]{SSSS-nondecreasing} for the precise formulation of the condition). 
\end{remark}

Next, we obtain an integration by parts formula by considering the special case $q=1$ and $v \equiv 0$ in \cite[Corollary 6.3]{SSSS-general}.
While it has been formulated for $n \geq 2$ in \cite{SSSS-general}, it also holds for $n=1$.
\begin{proposition}\label{prop:ineq_integration_by_parts_formula}
Let $p> \frac{2(n+1)}{n+2}$, and assume that the domain $E$ satisfies \eqref{assumption:one-sided-growth} with $\ell$ given by \eqref{assumption:function-one-sided-growth}, and for all $t \in [0,T)$, the complement of the time slice $E^t$ is uniformly $p$-fat with a parameter $\alpha>0$.
Further, consider a function $u\in \mathcal{V}^{p,0}(E)\cap L^{\infty}(0,T;L^{2}(\Omega, \R^{N}))$ with time derivative $\partial_t u \in (\mathcal{V}^{p,0}_{2}(E))'$. Then, the formula
\begin{align*}
\langle \partial_t u , \zeta u \rangle \geq - \tfrac12 \iint_E \zeta' |u|^2 \, \d x\d t
\end{align*}
holds for any non-negative function $\zeta \in C_0^{0,1}((0,T))$.
\end{proposition}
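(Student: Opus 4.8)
The plan is to establish the formula first for functions that are smooth in time (so that the pairing $\langle \partial_t u, \zeta u\rangle$ is an honest integral and integration by parts is classical), and then to pass to the limit using the density result in Lemma~\ref{lem:c0infty-v2p-density} together with the a priori bound $u \in L^\infty(0,T;L^2)$. A direct mollification of $u$ in time is delicate in a noncylindrical domain because translates in time need not have the correct spatial trace; this is exactly the point where the one-sided growth condition~\eqref{assumption:one-sided-growth} with $\ell$ as in~\eqref{assumption:function-one-sided-growth} enters, and it is the reason the statement is quoted from~\cite{SSSS-general} rather than proved from scratch. So concretely, I would:

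\textbf{Step 1 (smooth case).} Suppose first $u \in C^\infty$ in the relevant sense, or more precisely that $u$ is a member of the dense class from Lemma~\ref{lem:c0infty-v2p-density}, for which $\langle \partial_t u, \zeta u\rangle = \iint_E \partial_t u \cdot (\zeta u)\, \d x\d t$. Since $\zeta$ has compact support in $(0,T)$ and $u$ vanishes on the lateral boundary, integrate by parts in $t$: using $\partial_t u \cdot u = \tfrac12 \partial_t |u|^2$ one gets
\[
\iint_E \partial_t u \cdot (\zeta u)\,\d x\d t = \tfrac12 \iint_E \zeta\, \partial_t |u|^2 \, \d x\d t = -\tfrac12 \iint_E \zeta' |u|^2 \, \d x\d t,
\]
where the boundary terms at the moving lateral boundary vanish because $u(t) \in W^{1,p}_0(E^t)$ and those at $t=0,T$ vanish because $\zeta \in C_0^{0,1}((0,T))$. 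In this smooth case one in fact gets equality, not just the inequality.

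\textbf{Step 2 (approximation).} For general $u \in \mathcal{V}^{p,0}(E)\cap L^\infty(0,T;L^2)$ with $\partial_t u \in (\mathcal{V}^{p,0}_2(E))'$, choose $u_j$ in the dense class with $u_j \to u$ in $V^p(E)$; after a standard truncation/time-mollification one may also arrange $u_j \to u$ in a way compatible with the $L^\infty L^2$ bound and with weak-$*$ convergence of $\partial_t u_j$ to $\partial_t u$ in the dual space. The left-hand side passes to the limit by duality (here one needs $\zeta u_j \to \zeta u$ in $\mathcal{V}^{p,0}_2(E)$, which uses $\zeta$ bounded and the $L^2$ control), and the right-hand side passes to the limit because $|u_j|^2 \to |u|^2$ in $L^1_{\mathrm{loc}}$ on $\{\zeta > 0\}$, at least along a subsequence, while $\zeta'$ is bounded. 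The reason one ends up with the inequality $\geq$ rather than equality in the general statement is that in the limiting procedure the term $-\tfrac12\iint_E \zeta'|u|^2$ can only be controlled from one side if $u$ is merely in the energy class (e.g. if one works with lower semicontinuity of $\int \zeta' |u|^2$ along weakly convergent sequences when $\zeta' \le 0$, or more precisely one splits $\zeta'$ into its positive and negative parts and uses Fatou on the favourable part); in the end it is cleaner to simply invoke that~\cite[Corollary 6.3]{SSSS-general} is proved exactly for this purpose and specialize $q=1$, $v\equiv 0$.

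\textbf{Main obstacle.} The genuinely hard point is Step 2, specifically justifying that a function in $\mathcal{V}^{p,0}(E)$ with time derivative only in the dual space can be approximated by time-smooth functions \emph{retaining the zero lateral trace} in a domain whose sections $E^t$ change with $t$: a naive mollification $u * \eta_\varepsilon(\cdot/\text{time})$ lands in $W^{1,p}_0(E^s)$ for $s$ near $t$ only if $E^t$ and $E^s$ are comparable, which is precisely what~\eqref{assumption:one-sided-growth} with $\ell \in W^{1,r}$ buys, and the Sobolev exponent $r$ (hence the restriction $p > \frac{2(n+1)}{n+2}$) is what makes the resulting error terms integrable. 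Since this machinery is already developed in~\cite{BDSS,SSSS-general}, I would not reprove it; the proof of the Proposition is then just the specialization described above, and the only thing to check carefully is that the hypotheses of~\cite[Corollary 6.3]{SSSS-general} are met — namely $p>\frac{2(n+1)}{n+2}$, the $p$-fatness of $\R^n\setminus E^t$, condition~\eqref{assumption:one-sided-growth}, and $u \in \mathcal{V}^{p,0}(E)\cap L^\infty(0,T;L^2)$ with $\partial_t u \in (\mathcal{V}^{p,0}_2(E))'$ — all of which are in the hypotheses, so that setting $q=1$ and $v\equiv 0$ in that corollary yields exactly the asserted inequality for every non-negative $\zeta \in C_0^{0,1}((0,T))$.
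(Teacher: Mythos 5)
Your proposal is correct and in essence coincides with the paper's treatment: the paper does not reprove the formula but obtains it precisely as the special case $q=1$, $v\equiv 0$ of \cite[Corollary 6.3]{SSSS-general}, after checking that the hypotheses ($p>\frac{2(n+1)}{n+2}$, uniform $p$-fatness of $\R^n\setminus E^t$, condition \eqref{assumption:one-sided-growth} with $\ell$ as in \eqref{assumption:function-one-sided-growth}, and the stated regularity of $u$ and $\partial_t u$) are met, which is exactly what you do in your final paragraph. Your Steps 1--2 are a reasonable heuristic for why the result holds and where the noncylindrical difficulty lies, but they are supplementary to, not a replacement for, the citation both you and the paper rely on.
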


\section{Energy
  estimate near the lateral boundary}
\label{sec:Caccioppoli}
In this section, we establish energy estimates, also called
Caccioppoli estimates for cylinders near the lateral boundary of
$E$. The interior case and the case of cylinders close to the initial
boundary that do not touch the lateral boundary
are analogous to the case of cylindrical domains, see \cite{Boegelein:1,BP}.

The following energy estimate is well-known in cylindrical domains, see e.g.~\cite[Lemma 3.2]{Parviainen-degenerate}.
In the noncylindrical situation, a key ingredient of the proof is the integration by parts formula in Lemma \ref{prop:ineq_integration_by_parts_formula}.
\label{sec:Caccioppoli-lateral}
\begin{lemma} \label{lem:Caccioppoli-boundary} 
Let $p>\frac{2(n+1)}{n+2}$ and assume that $u$ is a weak solution according to Definition~\ref{def:weak-sol-global} with lateral and initial boundary values $g\in\mathfrak{G}^{0}$ and 
$g_o\in\mathfrak{G}_o^{0}$ that satisfy~\eqref{compatibility-lateral-initial}. Suppose that $E$ satisfies \eqref{assumption:one-sided-growth} with $\ell$ given by \eqref{assumption:function-one-sided-growth}, and \eqref{assumption:two-sided-growth}, and that $\R^n \setminus E^t$ is uniformly $p$-fat for every $t \in [0,T)$. Let $Q_{R,S}(z_o) \subset \R^{n+1}$ with $0<r< R \leq 1$ and $0 < s < S \leq 1$. Then, there exists $c = c(C_o,C_1,p) > 0$ such that
\begin{align*}
\sup_{t\in \Lambda_s(t_o) \cap (0,T)} &\int_{B_r(x_o) \cap E^t} |u(t) - g|^2\, \d x + \iint_{Q_{r,s}(z_o) \cap E} |Du|^p \, \d x \d t \\
& \leq  c \iint_{Q_{R,S}(z_o) \cap E} \frac{|u-g|^p}{(R-r)^p} + \frac{|u-g|^2}{S-s} + G \, \d x \d t
\end{align*}
holds true.
\end{lemma}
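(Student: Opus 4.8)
The strategy is the classical one for Caccioppoli estimates for parabolic $p$-Laplace type systems, adapted to the noncylindrical setting by using the integration by parts formula of Proposition~\ref{prop:ineq_integration_by_parts_formula} to control the time term. The natural test function is $\varphi = \eta^p \zeta (u-g)$, where $\eta \in C_0^\infty(B_R(x_o))$ is a spatial cutoff with $\eta \equiv 1$ on $B_r(x_o)$ and $|D\eta| \lesssim (R-r)^{-1}$, and $\zeta$ is a Lipschitz cutoff in time, supported in $\Lambda_S(t_o)$, equal to $1$ on $\Lambda_s(t_o)$, with $|\zeta'| \lesssim (S-s)^{-1}$; in addition we truncate $\zeta$ near a given time level to produce the supremum term. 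Since $(u-g)(t) \in W_0^{1,p}(E^t,\R^N)$ for a.e.\ $t$, the product $\eta^p\zeta(u-g)$ lies in $\mathcal{V}^{p,0}(E)$ (and in $\mathcal{V}_2^{p,0}(E)$ using $u,g \in L^\infty(0,T;L^2)$), so it is an admissible test function for the weak formulation once $\partial_t u \in (\mathcal{V}^{p,0}(E))'$, which holds by Remark~\ref{rem:time-derivative}. Testing the equation and moving the $g$-terms to the right yields, schematically,
\[
  \langle \partial_t u, \eta^p\zeta(u-g)\rangle
  + \iint_E \A(x,t,u,Du)\cdot D\big(\eta^p\zeta(u-g)\big)\,\d x\d t
  = \iint_E |F|^{p-2}F \cdot D\big(\eta^p\zeta(u-g)\big)\,\d x\d t.
\]

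\textbf{The time term.} The key step — and the main obstacle — is bounding $\langle \partial_t u, \eta^p\zeta(u-g)\rangle$ from below by $\tfrac12 \sup_t \int \eta^p\zeta |u(t)-g|^2 - (\text{good terms})$. Writing $w := u-g$, one wants to write $\langle \partial_t u, \eta^p\zeta w\rangle = \langle \partial_t w, \eta^p\zeta w\rangle + \langle \partial_t g, \eta^p\zeta w\rangle$; the second piece is estimated by Hölder's inequality using $\partial_t g \in L^{\hat p'}$ together with $w \in L^p$ and $w \in L^2$ (after Young's inequality, this contributes a $G$-term and an absorbable $|Du|^p$-term via the Gagliardo--Nirenberg/Sobolev embedding if needed, but since $\hat p' \leq 2$ and we keep $|u-g|^p$ and $|u-g|^2$ on the right-hand side, it contributes directly to the stated right-hand side). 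The first piece is precisely where Proposition~\ref{prop:ineq_integration_by_parts_formula} enters: since $w \in \mathcal{V}^{p,0}(E) \cap L^\infty(0,T;L^2)$ with $\partial_t w \in (\mathcal{V}_2^{p,0}(E))'$, applying that proposition with the cutoff $\eta^p\zeta$ (treating $\eta^p$ as a fixed weight) gives $\langle \partial_t w, \eta^p\zeta w\rangle \geq -\tfrac12\iint_E \eta^p \zeta' |w|^2\,\d x\d t$; combined with the truncation of $\zeta$ at a time level $t_1 \in \Lambda_s(t_o)\cap(0,T)$ this produces $\tfrac12\int_{E^{t_1}}\eta^p |w(t_1)|^2\,\d x - \tfrac12\iint_E \eta^p|\zeta'||w|^2$, hence after taking the supremum over $t_1$ the desired lower bound with the error term $\lesssim (S-s)^{-1}\iint |u-g|^2$. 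The initial datum makes no boundary contribution here because we only localize strictly inside $(0,T)$ via $\zeta$; one should be slightly careful that the compatibility condition \eqref{compatibility-lateral-initial}/\eqref{eq:initial-cond} is what legitimizes the representative of $u$ used in the supremum, but for a cylinder $Q_{R,S}(z_o)$ the statement is purely interior-in-time so no initial term appears.

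\textbf{The elliptic and source terms.} Expanding $D(\eta^p\zeta w) = \eta^p\zeta Dw + p\eta^{p-1}\zeta\, w \otimes D\eta$ and using the structure conditions \eqref{assumption:A}: the coercivity $\A\cdot\xi \geq C_o|\xi|^p$ gives the main term $C_o\iint \eta^p\zeta|Dw|^p$, which together with $Dw = Du - Dg$ and the triangle inequality produces $\iint_{Q_{r,s}\cap E}|Du|^p$ on the left after absorbing a $|Dg|^p \leq G$ contribution to the right; the cross term is controlled by the growth bound $|\A| \leq C_1|Du|^{p-1} \leq C_1(|Dw|+|Dg|)^{p-1}$, Hölder with exponents $\tfrac{p}{p-1}$ and $p$, and Young's inequality with a small parameter to absorb $|Dw|^p$ into the coercive term, leaving $\lesssim (R-r)^{-p}\iint|u-g|^p$ and a $G$-term. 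The source term on the right is handled identically: $|F|^{p-2}F$ has modulus $|F|^{p-1}$, paired with $\eta^p\zeta Dw$ via Young gives an absorbable $\varepsilon|Dw|^p$ plus $c(\varepsilon)|F|^p \leq c(\varepsilon)G$, and paired with the $w\otimes D\eta$ term gives $\lesssim (R-r)^{-p}\iint|u-g|^p + \iint G$ after Young with exponents $p$ and $\tfrac{p}{p-1}$. Collecting everything, choosing the Young parameters small enough to absorb all $|Dw|^p = |D(u-g)|^p$ terms into $C_o\iint\eta^p\zeta|D(u-g)|^p$, and finally replacing $|D(u-g)|^p$ by $c|Du|^p - c|Dg|^p \geq c|Du|^p - cG$ on the left yields the claimed inequality with $c = c(C_o,C_1,p)$. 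I do not expect any step beyond the time term to present genuine difficulty; the only care needed is the order of operations — first fix the spatial cutoff and prove the estimate with $\eta$ present, then the constants depend only on $C_o,C_1,p$ — and ensuring the admissibility of the test function as noted above.
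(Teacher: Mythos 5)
Your overall strategy coincides with the paper's proof: test with $\eta^p\zeta(u-g)$ (plus a time truncation at the level $t_1$), split the parabolic term into $\langle\partial_t(u-g),\cdot\rangle+\langle\partial_t g,\cdot\rangle$, treat the first piece with Proposition~\ref{prop:ineq_integration_by_parts_formula}, the second by Young's inequality (giving $|\partial_t g|^{\hat p'}$, hence $G$), and handle the elliptic and source terms by \eqref{assumption:A} and Young in the standard way. Two points, however, do not go through as you wrote them. The main one is your claim that ``no initial term appears'' because the localization is ``strictly inside $(0,T)$''. The lemma is \emph{not} purely interior in time: $Q_{R,S}(z_o)$ is allowed to cross $\{t=0\}$ (and this case is actually used later, since the lateral case \eqref{eq:case-lateral} does not exclude $0\in\Lambda^{(\lambda)}$), and then the supremum and the left-hand integral reach down to $t=0^+$. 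Since Proposition~\ref{prop:ineq_integration_by_parts_formula} requires $\zeta\in C_0^{0,1}((0,T))$, you must insert an additional cutoff $\psi_\eps$ vanishing on $(0,\eps]$; the integration by parts then produces the term $-\tfrac1{2\eps}\int_\eps^{2\eps}\!\int\eta^p\zeta|u-g|^2\,\dx\dt$, which has the \emph{bad} sign and does not disappear by support considerations. It vanishes as $\eps\downarrow0$ only because of the initial condition \eqref{eq:initial-cond} together with the compatibility \eqref{compatibility-lateral-initial} (write $u-g=(u-g_o)-(g-g_o)$); this is precisely why both appear among the hypotheses and are invoked in the paper's proof. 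Your remark that compatibility merely ``legitimizes the representative of $u$ used in the supremum'' misidentifies where these assumptions enter, and as written your plan has no argument for cylinders reaching the initial time.

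A second, smaller issue: Proposition~\ref{prop:ineq_integration_by_parts_formula} admits only a cutoff depending on $t$, so you cannot apply it ``with the cutoff $\eta^p\zeta$, treating $\eta^p$ as a fixed weight''. The correct device (used in the paper) is to apply it to the function $\eta^{p/2}(u-g)$, which is why the spatial cutoff is built so that both $\eta^p$ and $\eta^{p/2}$ are Lipschitz; one then checks that $\eta^{p/2}(u-g)$ has the required regularity and that $\langle\partial_t(\eta^{p/2}(u-g)),\zeta\psi_\eps\,\eta^{p/2}(u-g)\rangle$ coincides with the term you want to bound. Relatedly, in the elliptic part the coercivity gives $\A(x,t,u,Du)\cdot Du\ge C_o|Du|^p$, not a lower bound for $\A\cdot D(u-g)$ in terms of $|D(u-g)|^p$ (there is no monotonicity assumption); the correct route is $\A\cdot D(u-g)\ge C_o|Du|^p-C_1|Du|^{p-1}|Dg|$ followed by Young, which yields $|Du|^p$ on the left directly, as in the paper. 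These last points are standard repairs, but the initial-layer argument is a genuine missing step in your proposal.
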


\begin{proof}
For $t_1 \in \Lambda_{s}(t_o) \cap (0,T)$ we define
\begin{align*}
	\psi_\varepsilon(t) := \left\{
	\begin{array}{cl}
		\frac{t-\varepsilon}{\varepsilon},& \text{for } t\in (\varepsilon,2\varepsilon], \\[5pt]
		1, & \text{for } t\in (2\varepsilon,t_1-2\varepsilon],\\[5pt]
		\frac{t_1-\varepsilon-t}{\varepsilon},& \text{for } t \in (t_1-2\varepsilon,t_1-\varepsilon],\\[5pt]
		0,& \text{otherwise.} 
	\end{array}
	\right.
\end{align*}
Let $\tilde{\eta} \colon \R \to [0,1]$ be given by $\tilde \eta (\sigma) = \min\{ \max \{\sigma,0\}^{\max\{2/p,1\}},1 \}$ and set $\eta \colon \R^n \to [0,1]$,
$$
	\eta(x)
	:=
	\tilde{\eta} \left( 1 - \frac{\dist(x, B_r(x_o))}{R-r} \right).
$$
Then, we have that $\eta^p, \eta^\frac{p}{2} \in W^{1,\infty}(B_R(x_o);[0,1])$ with $\eta \equiv 0$ outside of $B_R(x_o)$, $\eta \equiv 1$ in $B_r(x_o)$, and $|D\eta|\leq \frac{c}{R-r}$. 
Further, let $\zeta \in W^{1,\infty}\left( \Lambda_S(t_o),[0,1] \right)$ be defined by
\begin{align*}
	\zeta(t):= \left\{
	\begin{array}{cl}
		\frac{t-t_o+S}{S-s}, & \text{for } t \in (t_o-S,t_o-s), \\[5pt]
		1,& \text{for } t \geq t_o-s.
	\end{array}
	\right.
\end{align*}
Now, note that $\partial_t u \in (\mathcal{V}^{p,0}(E))'$ by Remark \ref{rem:time-derivative}.
Rewriting \eqref{eq:weak-pde} as
\begin{equation} \label{eq:Caccioppoli-aux}
	\langle \partial_t u, \varphi \rangle
	+ \iint_E \mathbf{A}(x,t,u,Du) \cdot D\varphi \,\dx\dt
	=
	- \iint_E |F|^{p-2} F \cdot D\varphi \,\dx\dt,
\end{equation}
and using that $C^\infty_0(E,\R^N)$ is dense in $\mathcal{V}^{p,0}_2(E)$ by Lemma \ref{lem:c0infty-v2p-density}, and that $u-g \in \mathcal{V}^{p,0}_2(E)$ by Definition~\ref{def:weak-sol-global}, we may choose
$$
	\varphi(x,t)
	:=
	\eta^p(x) \zeta(t) \psi_\varepsilon(t) (u(x,t)-g(x,t))
$$
as a testing function in \eqref{eq:Caccioppoli-aux}.
For the parabolic part, i.e.~the first term on the left-hand side of \eqref{eq:Caccioppoli-aux}, we obtain that
\begin{align*}
	\big\langle \partial_t u , \eta^p \zeta \psi_\varepsilon (u-g) \big\rangle
	=
	\big\langle \partial_t (u-g) , \eta^p \zeta \psi_\varepsilon (u-g) \big\rangle
	+ \big\langle \partial_t g , \eta^p \zeta \psi_\varepsilon (u-g) \big\rangle
	=: \mathrm{I} + \mathrm{II},
\end{align*}
where the definition of $\mathrm{I}$ and $\mathrm{II}$ is clear from the context.
By Proposition~\ref{prop:ineq_integration_by_parts_formula} applied to
$\eta^\frac{p}{2}(u-g)$, assumption \eqref{compatibility-lateral-initial}, and the initial condition~\eqref{eq:initial-cond}, we find that 
\begin{align*}
\mathrm I &\geq - \tfrac12 \iint_{Q_{R,S} \cap E} \eta^p |u-g|^2 (\psi_\eps'\zeta + \zeta' \psi_\epsilon) \, \d x \d t \\
&\xrightarrow{\eps \downarrow 0} \tfrac12 \int_{B_R \cap E^{t_1}} \eta^p |u-g|^2 \zeta \, \d x - \tfrac12 \iint_{Q_{R,S} \cap E}  \eta^p \zeta'|u-g|^2 \, \d x\d t.
\end{align*}
On the one hand, in the case $p > 2$ we use Young's inequality and the fact that $R-r \leq 1$ to get that
\begin{align*}
\mathrm{II} &= \iint_{Q_{R,S} \cap E}  \eta^p\zeta\psi_\varepsilon \partial_t g \cdot (u-g) \, \d x\d t \\
&\geq - \iint_{Q_{R,S} \cap E} \eta^p\zeta\psi_\varepsilon |\partial_t g|^{p'} \, \d x\d t - c(p) \iint_{Q_{R,S} \cap E} \eta^p\zeta\psi_\varepsilon \frac{|u-g|^p}{(R-r)^p} \, \d x\d t.
\end{align*}
On the other hand, if $\frac{2(n+1)}{n+2} \leq p<2$, Young's inequality and the fact that $S -s \leq 1$ yield
\begin{align*}
\mathrm{II} &\geq - \iint_{Q_{R,S} \cap E} \eta^p\zeta\psi_\varepsilon |\partial_t g|^{2} \, \d x\d t - c \iint_{Q_{R,S} \cap E} \eta^p\zeta\psi_\varepsilon \frac{|u-g|^2}{S-s} \, \d x\d t.
\end{align*}
For the divergence part, i.e.~the second term on the left-hand side of \eqref{eq:Caccioppoli-aux}, by the structural conditions~\eqref{assumption:A} and Young's inequality, we conclude that
\begin{align*}
\iint_{E} &\psi_\eps \zeta \mathbf{A}(x,t,u,Du ) \cdot D [\eta^p (u-g)] \, \d x \d t \\
&= \iint_{Q_{R,S} \cap E} \psi_\eps \zeta \eta^p  \mathbf{A}(x,t,u,Du ) \cdot D (u-g) \, \d x \d t \\
&\phantom{=} + p \iint_{Q_{R,S} \cap E} \psi_\eps \zeta \eta^{p-1} (u-g) \mathbf{A}(x,t,u,Du ) \cdot D \eta \, \d x \d t \\
&\geq C_o \iint_{Q_{R,S} \cap E} \psi_\eps \zeta \eta^p |D u|^p \, \d x \d t - C_1 \iint_{Q_{R,S} \cap E} \psi_\eps \zeta \eta^p  |Du|^{p-1} |Dg| \, \d x \d t \\
&\phantom{=} - C_1 p \iint_{Q_{R,S} \cap E} \psi_\eps \zeta \eta^{p-1} |u-g| |Du|^{p-1} |D \eta| \, \d x \d t \\
&\geq \tfrac{C_o}{2} \iint_{Q_{R,S} \cap E} \psi_\eps \zeta \eta^p |D u|^p \, \d x \d t - c(C_o,C_1,p)\iint_{Q_{R,S} \cap E} |Dg|^p \, \d x \d t  \\
&\phantom{=} - c(C_o,C_1,p) \iint_{Q_{R,S} \cap E} \frac{|u-g|^p}{(R-r)^p}\, \d x\d t.
\end{align*}
For the term on the right-hand side of \eqref{eq:Caccioppoli-aux}, Young's inequality gives us that
\begin{align*}
	- \iint_{E} &\psi_\eps \zeta |F|^{p-2}F \cdot D[\eta^p(u-g)] \, \d x\d t \\
	&\leq \tfrac{C_o}{4} \iint_{Q_{R,S} \cap E}  \psi_\eps \zeta \eta^p |Du|^p \, \d x\d t
	\\ &\phantom{=}
	+ c(C_o,p) \iint_{Q_{R,S} \cap E} \frac{|u-g|^p}{(R-r)^p} + |Dg|^p + |F|^p \, \d x\d t.
\end{align*}
Inserting the preceding estimates into \eqref{eq:Caccioppoli-aux} and passing to the limit $\eps \downarrow 0$, we infer
\begin{align*}
\int_{B_r(x_o) \cap E^{t_1}} & |u-g|^2 \, \d x + \iint_{( B_r(x_o) \times (t_o - s,t_1) ) \cap E} |Du|^p \, \d x\d t \\
&\leq c\iint_{Q_{R,S}(z_o) \cap E} \frac{|u-g|^p}{(R-r)^p} + \frac{|u-g|^2}{S-s} \, \d x\d t \\
&\quad + c\iint_{Q_{R,S}(z_o) \cap E} |F|^p + |Dg|^p + |\partial_t g|^{\hat p'} \, \d x\d t.
\end{align*}
Noting that both terms on the left-hand side of the preceding inequality are non-negative, taking the supremum over $t_1 \in \Lambda_s$ in the first term, and passing to the limit $t_1 \uparrow t_o + s$ in the second term, we conclude the proof of the lemma.
\end{proof}

\section{Reverse H\"older inequalities}
\label{sec:rev-Holder}
Our goal in this section is to derive reverse Hölder inequalities.
To this end, for $z_o \in E$, $\lambda>0$, $\beta \in \big(\tfrac12,1\big]$, and
\begin{equation}
	0<\varrho \leq \bigg( \frac{\min \left\{1, \lambda^{(p-2)\beta} \right\}}{2^{7\beta-3} M} \bigg)^\frac{1}{2\beta-1}, 
	\label{eq:radius-small}
\end{equation}
we will distinguish between the following cases.
On the one hand, if
\begin{equation}
	B_{8\varrho}(x_o) \setminus E^t \neq \varnothing
	\text{ for some } t \in \Lambda_{8\varrho}^{(\lambda)}(t_o) \cap (0,T),
	\label{eq:case-lateral}
\end{equation}
the cylinder is near the lateral boundary; we refer to Section \ref{sec:rev-Holder-lateral}. On the other hand, if we have that
\begin{equation}
	B_{8\varrho}(x_o) \setminus E^t = \varnothing
	\text{ for all } t \in \Lambda_{8\varrho}^{(\lambda)}(t_o) \cap (0,T)
	\qquad \text{and} \qquad 0 \in \Lambda_{2\varrho}^{(\lambda)}(t_o);
	\label{eq:case-initial}
\end{equation}
we are dealing with cylinders near the initial boundary; see Section~\ref{sec:rev-Holder-initial}.
In Section~\ref{sec:rev-Holder-interior} we consider the remaining case of interior cylinders, i.e., cylinders such that
\begin{equation}
	B_{8\varrho}(x_o) \times \Lambda_{2 \rho}^{(\lambda)}(t_o) \subset E.
	\label{eq:case-interior}
\end{equation}

\subsection{Near the lateral boundary} \label{sec:rev-Holder-lateral}
In this section, we prove a reverse Hölder inequality for cylinders near the lateral boundary of $E$.
We start with the following lemma that shows that if $B_{8\varrho}(x_o)$ intersects the complement of a time slice $E^t$ for some time $s \in \Lambda_{8\varrho}^{(\lambda)}(t_o) \cap (0,T)$, and the radius $\varrho>0$ is small enough, then the larger ball $B_{16\varrho}(x_o)$ intersects $\R^n \setminus E^t$ for all times $t \in \Lambda_{8\varrho}^{(\lambda)}(t_o) \cap (0,T)$.
\begin{lemma} \label{lem:ball-complement-intersect}
Suppose that $E$ satisfies~\eqref{assumption:two-sided-growth}, that $\lambda > 0$, and that the radius $\varrho>0$ satisfies \eqref{eq:radius-small} for some $\beta \in \big(\tfrac12 ,1\big]$.
If \eqref{eq:case-lateral} holds true, then we have that
$$
B_{16 \rho} (x_o) \setminus E^t \neq \varnothing \quad \text{ for every } t \in \Lambda_{8 \rho}^{(\lambda)}(t_o) \cap (0,T).
$$
\end{lemma}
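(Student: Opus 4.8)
The claim is that a single time slice $s \in \Lambda_{8\rho}^{(\lambda)}(t_o) \cap (0,T)$ for which $B_{8\rho}(x_o) \setminus E^s \neq \varnothing$ forces $B_{16\rho}(x_o) \setminus E^t \neq \varnothing$ for \emph{all} $t$ in the same time interval. The natural tool is the two-sided growth condition \eqref{assumption:two-sided-growth}: by definition of the complementary excess, for any $x \in \R^n \setminus E^s$ there is a point $y \in \R^n \setminus E^t$ with $\dist(x,y) = \dist(x, \R^n \setminus E^t) \leq \mathbf{e}^c(E^t, E^s) \leq M|t-s|^\beta$. So I would pick $x \in B_{8\rho}(x_o) \setminus E^s$, take the associated $y \in \R^n \setminus E^t$, and estimate $\dist(y, x_o) \leq \dist(x,x_o) + \dist(x,y) < 8\rho + M|t-s|^\beta$.

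It therefore remains to check that $M|t-s|^\beta \leq 8\rho$ whenever $s, t \in \Lambda_{8\rho}^{(\lambda)}(t_o)$, which would give $\dist(y,x_o) < 16\rho$, i.e.\ $y \in B_{16\rho}(x_o) \setminus E^t$, as desired. Since both $s$ and $t$ lie in an interval of half-length $\lambda^{2-p}(8\rho)^2$, we have $|t-s| \leq 2\lambda^{2-p}(8\rho)^2 = 2^{7}\lambda^{2-p}\rho^2$, hence
\[
M|t-s|^\beta \leq M 2^{7\beta}\lambda^{(2-p)\beta}\rho^{2\beta}.
\]
Now invoke the smallness condition \eqref{eq:radius-small} on $\rho$: raising it to the power $2\beta - 1 > 0$ gives $\rho^{2\beta-1} \leq \frac{\min\{1,\lambda^{(p-2)\beta}\}}{2^{7\beta-3}M}$, so that $2^{7\beta-3} M \rho^{2\beta-1} \leq \min\{1, \lambda^{(p-2)\beta}\} \leq \lambda^{(p-2)\beta}$, equivalently $2^{7\beta}M\lambda^{(2-p)\beta}\rho^{2\beta-1} \leq 2^3 = 8$. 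Multiplying by $\rho$ yields exactly $M 2^{7\beta}\lambda^{(2-p)\beta}\rho^{2\beta} \leq 8\rho$, which closes the estimate.

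The only subtlety — and where the hypothesis $\beta > \tfrac12$ is used — is that the exponent $2\beta - 1$ on $\rho$ in \eqref{eq:radius-small} must be positive for the manipulation "raise to the power $2\beta-1$ and conclude a bound on $\rho^{2\beta-1}$'' to be monotone in the right direction; this is guaranteed precisely by $\beta \in (\tfrac12, 1]$. One also has to be mildly careful with the factor $\min\{1,\lambda^{(p-2)\beta}\}$ depending on whether $p \geq 2$ or $p < 2$, but in both cases $\min\{1,\lambda^{(p-2)\beta}\} \leq \lambda^{(p-2)\beta}$, which is all that is needed. No other obstacle arises; the argument is a short direct computation once the right point $y$ is produced from \eqref{assumption:two-sided-growth}.
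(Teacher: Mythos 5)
Your proposal is correct and follows essentially the same argument as the paper: pick a point of $\R^n\setminus E^s$ inside $B_{8\rho}(x_o)$, use the two-sided condition \eqref{assumption:two-sided-growth} together with the smallness assumption \eqref{eq:radius-small} to show that $M\big(2\lambda^{2-p}(8\rho)^2\big)^\beta\le 8\rho$, and conclude by the triangle inequality. The only cosmetic difference is that you assert the distance to $\R^n\setminus E^t$ is attained (which is fine, since this set is closed and nonempty because $E$ is bounded), whereas the paper simply picks a nearby point $z\in\R^n\setminus E^t$ with $|z-y|\le 8\rho$.
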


\begin{proof}
Let $s \in \Lambda_{8 \rho}^{(\lambda)}(t_o) \cap (0,T)$ denote the time in \eqref{eq:case-lateral} at which $B_{8\varrho}(x_o) \setminus E^s \neq \varnothing$.
By the bound on $\rho$ and condition~\eqref{assumption:two-sided-growth}, there exists $y \in B_{8 \rho} (x_o) \setminus E^s$ such that
$$
	\dist(y, \R^n \setminus E^t)
	\leq
	\mathbf{e}^c(E^t, E^s)
	\leq
	M\big( 2 \lambda^{2-p} (8 \rho)^2 \big)^\beta \leq 8 \rho
$$
for every $t \in \Lambda_{8 \rho}^{(\lambda)}(t_o)$. Thus, for every $t \in \Lambda_{8\rho}^{(\lambda)}$ we find $z \in \R^n \setminus E^t$ such that $|z-y| \leq 8 \rho$, and further
$$
|z-x_o| \leq |z-y| + |y-x_o| \leq 16 \rho,
$$
which concludes the proof of the lemma.
\end{proof}

Next, we prove the following boundary Poincaré inequality.
\begin{lemma}
\label{lem:boundary-Poincare}
Let $\R^n \setminus E^t$ be uniformly $p$-fat with fatness constant $\alpha>0$, and let $u(t) \in g(t) + W^{1,p}_0(E^t,\R^N)$.
Moreover, assume that for $x_o \in E^t$ and $r>0$ we have that $B_\frac{r}{3}(x_o) \setminus E^t \neq \varnothing$.
Then, there exists an exponent $\gamma = \gamma (n,p,\alpha)$ such that for every $\gamma \leq \vartheta \leq p$
$$
	\int_{B_r(x_o) \cap E^t} \bigg| \frac{u-g}{r} \bigg|^\vartheta(t) \,\dx
	\leq
	c \int_{B_r(x_o) \cap E^t} |D(u-g)|^\vartheta(t) \,\dx
$$
holds true with a constant $c=c(n,p,\vartheta,\alpha)$.
\end{lemma}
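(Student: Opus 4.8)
The plan is to reduce the asserted inequality, via the Maz'ya type Poincaré inequality of Lemma~\ref{lem:mazya-sobolev}, to a lower bound for the variational $\vartheta$-capacity of the zero set of $u-g$, and then to obtain that capacity bound from the uniform $\vartheta$-fatness of $\R^n\setminus E^t$, which in turn follows from the self-improving property of Theorem~\ref{thm:fatness-improvement}.

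First I would set $w:=(u-g)(t)\in W^{1,p}_0(E^t,\R^N)$ and extend it by zero to all of $\R^n$. Since $E^t$ is bounded and $\vartheta\le p$, approximating $w$ by functions in $C_0^\infty(E^t,\R^N)$ in the $W^{1,p}$-norm and hence in the $W^{1,\vartheta}$-norm shows that $w\in W^{1,\vartheta}(\R^n,\R^N)$ with $w=0$ and $Dw=0$ a.e.\ on $\R^n\setminus E^t$, and that the $\vartheta$-quasicontinuous representative of $w$ vanishes $\vartheta$-quasi everywhere on $\R^n\setminus E^t$. Applying Lemma~\ref{lem:mazya-sobolev} to the scalar function $|w|\in W^{1,\vartheta}(B_r(x_o))$, using $|D|w||\le |Dw|$ and $q=\tilde q=\vartheta$, and writing $N:=N_{B_{r/2}(x_o)}(|w|)$, gives
\begin{equation*}
	\bint_{B_r(x_o)}|w|^\vartheta\,\d x
	\le
	\frac{c(n,\vartheta)}{\ca_\vartheta\big(N,B_r(x_o)\big)}\int_{B_r(x_o)}|Dw|^\vartheta\,\d x .
\end{equation*}
Since $w$ vanishes $\vartheta$-q.e.\ on $\R^n\setminus E^t$, the set $N$ contains $\overline B_{r/2}(x_o)\setminus E^t$ up to a set of zero $\vartheta$-capacity, whence $\ca_\vartheta(N,B_r(x_o))\ge \ca_\vartheta(\overline B_{r/2}(x_o)\setminus E^t,B_r(x_o))$.

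The next step is the capacity lower bound. By Theorem~\ref{thm:fatness-improvement} there is $\gamma=\gamma(n,p,\alpha)\in(1,p)$ such that $\R^n\setminus E^t$ is uniformly $\gamma$-fat, and since $\R^n\setminus E^t$ is closed, Lemma~\ref{lem:fatness-larger-exponents} then yields that it is uniformly $\vartheta$-fat for every $\vartheta\in[\gamma,p]$ with fatness constant $\alpha_\vartheta=\alpha_\vartheta(n,p,\vartheta,\alpha)$. Because $x_o\in E^t$ and $B_{r/3}(x_o)\setminus E^t\neq\varnothing$, Lemma~\ref{lem:consequence-fatness} applied with exponent $\vartheta$, $\Omega=E^t$, $y=x_o$ and $\rho=r$ gives
\begin{equation*}
	\ca_\vartheta\big(\overline B_{r/2}(x_o)\setminus E^t,B_r(x_o)\big)
	\ge
	c\,\ca_\vartheta\big(\overline B_{r/2}(x_o),B_r(x_o)\big)
	\ge
	c(n,\vartheta)\,r^{\,n-\vartheta},
\end{equation*}
where the last step uses Lemma~\ref{lem:estimates-p-capacity} with exponent $\vartheta$. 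Combining the two displays with $|B_r(x_o)|=c(n)r^n$ gives
\begin{equation*}
	\int_{B_r(x_o)}\Big|\frac{w}{r}\Big|^\vartheta\,\d x
	\le
	c(n,p,\vartheta,\alpha)\int_{B_r(x_o)}|Dw|^\vartheta\,\d x ,
\end{equation*}
and since $w$ and $Dw$ vanish a.e.\ outside $E^t$ the integrals may be restricted to $B_r(x_o)\cap E^t$, which is the asserted estimate.

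The only genuinely delicate points are bookkeeping ones: verifying that the uniform $\gamma$-fatness self-improves \emph{upward} to all exponents in $[\gamma,p]$ with a fatness constant depending only on $(n,p,\vartheta,\alpha)$, and justifying that the zero set of the quasicontinuous representative of $u-g$ differs from $\overline B_{r/2}(x_o)\setminus E^t$ only by a set of zero $\vartheta$-capacity, so that the capacity comparison above is legitimate. Once these are settled, the proof is a straightforward chaining of Lemmas~\ref{lem:consequence-fatness}, \ref{lem:estimates-p-capacity}, \ref{lem:fatness-larger-exponents}, \ref{lem:mazya-sobolev} and Theorem~\ref{thm:fatness-improvement}, requiring no further estimates.
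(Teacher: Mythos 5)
Your proposal is correct and follows essentially the same route as the paper: extend $u-g$ by zero, pick a quasicontinuous representative vanishing quasieverywhere on $\R^n\setminus E^t$, apply the Maz'ya inequality of Lemma~\ref{lem:mazya-sobolev}, and bound the capacity of the zero set from below via Theorem~\ref{thm:fatness-improvement}, Lemma~\ref{lem:fatness-larger-exponents}, Lemma~\ref{lem:consequence-fatness} and Lemma~\ref{lem:estimates-p-capacity}. The only cosmetic difference is that you pass to the scalar function $|w|$ before invoking the Maz'ya inequality, which changes nothing of substance.
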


\begin{proof}
By Theorem \ref{thm:fatness-improvement} and Lemma \ref{lem:fatness-larger-exponents}, there exists $\gamma = \gamma (n,p,\alpha)$ such that $\R^n \setminus E^t$ is uniformly $\vartheta$-fat for any $\gamma \leq \vartheta \leq p$ with the fatness constant $\alpha_\vartheta = a_\vartheta(n,p,\vartheta,\alpha)$.
Extending $u-g \in W^{1,\vartheta}_0(E^t,\R^N)$ to $B_r(x_o) \setminus E^t$ by zero, and choosing a $\vartheta$-quasicontinuous representative, by Lemma \ref{lem:mazya-sobolev} we find that
\begin{align}
	\int_{B_r(x_o) \cap E^t} &|u-g|^\vartheta(t) \,\dx
	\nonumber \\ &\leq
	\frac{c(n,\vartheta) r^n}{\operatorname{cap}_\vartheta\big( N_{B_{r/2}(x_o)}(u-g), B_r(x_o) \big)} \int_{B_r(x_o) \cap E^t} |D(u-g)|^\vartheta(t) \,\dx,
	\label{eq:boundary-Poincare-aux}
\end{align}
where
$$
	N_{B_{r/2}(x_o)}(u-g) :=
	\big\{ x \in \overline{B_{r/2}(x_o)} : u(x) - g(x) = 0 \big\}.
$$
Note that according to \cite[Theorem 5.26]{KLV21}, the representative
of $u-g$ can be chosen such that $u-g=0$ $p$-quasieverywhere in $\R^n \setminus E^t$ (i.e.~everywhere outside of a set of $p$-capacity zero).
Further, we know that $B_\frac{r}{3}(x_o) \setminus E^t \neq \varnothing$ and that $\R^n \setminus E^t$ is uniformly $p$-fat.
Therefore, by Lemma~\ref{lem:consequence-fatness} and Lemma~\ref{lem:estimates-p-capacity}, we obtain that
\begin{align*}
	\ca_\vartheta\big( N_{B_{r/2}(x_o)}(u-g), B_r(x_o) \big)
	&\geq
	\ca_\vartheta\big( \big( \R^n \setminus E^t \big) \cap \overline{B_{r/2}(x_o)}, B_r(x_o) \big)
	\\ &\geq
	c(n,\vartheta,\alpha_\vartheta) \ca_\vartheta\big( \overline{B_{r/2}(x_o)}, B_{r}(x_o) \big)
	\\ &\geq
	c(n,p,\vartheta,\alpha) r^{n-\vartheta}.
\end{align*}
Inserting this into \eqref{eq:boundary-Poincare-aux}, we conclude the proof of the lemma.
\end{proof}

In the singular range $1<p<2$, the following lemma helps us to estimate a quadratic term.
Its proof follows along the lines of \cite[Lemma 4.3]{BP}.
\begin{lemma}
\label{lem:quadratic-term-singular}
Let $\frac{2(n+1)}{n+2} <p < 2$, and let $u$ be a weak solution according to Definition~\ref{def:weak-sol-global} with lateral and initial boundary values $g \in \mathfrak{G}^{0}$ and
$g_o \in \mathfrak{G}^{0}_o$ that satisfy~\eqref{compatibility-lateral-initial}.
Suppose that $E$ satisfies \eqref{assumption:one-sided-growth} with $\ell$ given by \eqref{assumption:function-one-sided-growth}, and \eqref{assumption:two-sided-growth}, and that $\R^n \setminus E^t$ is uniformly $p$-fat for every $t \in [0,T)$.
Further, assume that for $\lambda>0$ and $\varrho>0$ we have that
$B_{16 \rho} (x_o) \setminus E^t \neq \varnothing$ for every $t \in
\Lambda_{8 \rho}^{(\lambda)}(t_o) \cap (0,T)$, and that
\begin{equation}
	\frac{1}{\babs{Q_{48\rho}^{(\lambda)}(z_o)}} \iint_{Q_{48\rho}^{(\lambda)}(z_o) \cap E} |D u|^p + G \, \d x \d t
	\leq \lambda^p.
	\label{eq:scaling-quadratic-term-singular}
\end{equation}
Then, there exists a constant $c=c(n,p,C_o,C_1)$ such that
$$
	\frac{1}{\babs{Q_{4\rho}^{(\lambda)}(z_o)}} \iint_{Q_{4\rho}^{(\lambda)}(z_o) \cap E} \bigg| \frac{u-g}{\varrho} \bigg|^2 \,\dx\dt
	\leq c \lambda^2.
$$
\end{lemma}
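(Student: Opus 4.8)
The plan is to derive the claimed bound by interpolating the quadratic average against the $L^p$-gradient energy provided by the scaling assumption~\eqref{eq:scaling-quadratic-term-singular}, compensating for the fact that $2>p$ in the singular range by using the sup-bound on $\int|u-g|^2$ coming from the energy estimate in Lemma~\ref{lem:Caccioppoli-boundary}. Concretely, I would first fix the intrinsic cylinder $Q_{4\rho}^{(\lambda)}(z_o)$ and, for a.e.\ time slice $t\in\Lambda_{4\rho}^{(\lambda)}(t_o)\cap(0,T)$, apply the boundary Poincar\'e inequality of Lemma~\ref{lem:boundary-Poincare} on the ball $B_{4\rho}(x_o)$ with a suitable exponent $\vartheta\in[\gamma,p]$. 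The hypothesis $B_{16\rho}(x_o)\setminus E^t\neq\varnothing$ (hence a fortiori $B_{4\rho}(x_o)\setminus E^t\neq\varnothing$, after adjusting constants so that $B_{(4\rho)/3}(x_o)$ meets the complement — this is where the factor-$16$ versus factor-$4$ discrepancy must be handled, possibly by working on $B_{12\rho}$ or $B_{48\rho}$ and absorbing into the constant) ensures the Poincar\'e inequality is applicable on every slice, giving
\begin{align*}
	\int_{B_{4\rho}(x_o)\cap E^t} \bigg|\frac{u-g}{\varrho}\bigg|^\vartheta(t)\,\dx
	\leq c \int_{B_{4\rho}(x_o)\cap E^t} |D(u-g)|^\vartheta(t)\,\dx.
\end{align*}

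Next I would split $|u-g|^2 = |u-g|^\vartheta\cdot|u-g|^{2-\vartheta}$ — but it is cleaner to interpolate directly: choose $\theta\in(0,1)$ with $\frac1{2} = \frac{\theta}{p} + \frac{1-\theta}{2}\cdot(\text{something})$, i.e.\ treat $\|u-g\|_{L^2}$ over each slice as sitting between $\|u-g\|_{L^p}$ (controlled by Poincar\'e and the gradient energy) and $\|u-g\|_{L^\infty_t L^2_x}$ (controlled by the sup-term in Lemma~\ref{lem:Caccioppoli-boundary} on a slightly larger cylinder, say $Q_{8\rho,(\cdot)}$ or $Q_{12\rho}^{(\lambda)}$). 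Integrating in time over $\Lambda_{4\rho}^{(\lambda)}(t_o)$, using H\"older in time with the intrinsic scaling $\lambda^{2-p}\rho^2$ for the time-length, and inserting the Caccioppoli estimate (with $u-g$ playing the role there, and noting $R-r\sim\rho$, $S-s\sim\lambda^{2-p}\rho^2$), I would arrive at a bound of the form
\begin{align*}
	\frac{1}{|Q_{4\rho}^{(\lambda)}(z_o)|}\iint_{Q_{4\rho}^{(\lambda)}(z_o)\cap E}\bigg|\frac{u-g}{\varrho}\bigg|^2\dx\dt
	\leq c\,\lambda^{2-p}\bigg(\frac{1}{|Q_{8\rho}^{(\lambda)}|}\iint_{Q_{8\rho}^{(\lambda)}\cap E}\big(|Du|^p+G\big)\dx\dt\bigg)^{?}
\end{align*}
where the exponent is chosen so that, after invoking~\eqref{eq:scaling-quadratic-term-singular} (enlarged to $Q_{48\rho}^{(\lambda)}$ to dominate both $Q_{8\rho}^{(\lambda)}$ and $Q_{12\rho}^{(\lambda)}$), all the $\lambda$-powers combine to give exactly $\lambda^2$. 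The bookkeeping of these $\lambda$-exponents — verifying that the scaling deficit arithmetic closes, i.e.\ that $(2-p)+p\cdot(\text{interpolation exponent})=2$ after accounting for the time-slice $L^p$-to-$L^2$ interpolation using that $p>\frac{2(n+1)}{n+2}$ guarantees the relevant Gagliardo--Nirenberg exponent is admissible — is the crux.

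The main obstacle I anticipate is precisely this last point: making the interpolation between the spatial gradient bound and the temporal $\sup L^2$ bound yield the quadratic average with the \emph{correct} power of $\lambda$, rather than some deficit power. This forces a careful choice of the parabolic Gagliardo--Nirenberg / Sobolev embedding exponents, and it is exactly here that the constraint $p>\frac{2(n+1)}{n+2}$ (equivalently, the scaling deficit $d$ being positive and finite, cf.\ \eqref{eq:scaling-deficit}) is used — below that threshold the quadratic term cannot be controlled this way. A secondary, more technical nuisance is the radius mismatch: Lemma~\ref{lem:boundary-Poincare} needs $B_{r/3}\setminus E^t\neq\varnothing$ while the hypothesis only gives $B_{16\rho}\setminus E^t\neq\varnothing$, so one must apply Poincar\'e on a ball of radius comparable to $48\rho$ (which meets the complement since $B_{16\rho}$ does) and then restrict back, absorbing geometric constants; this is routine but must be done carefully so that the enlarged cylinder on which~\eqref{eq:scaling-quadratic-term-singular} is invoked is still $Q_{48\rho}^{(\lambda)}(z_o)$ and no larger.
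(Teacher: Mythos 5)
Your overall route is the same as the paper's: extend $u-g$ by zero, apply the Gagliardo--Nirenberg inequality of Lemma~\ref{lem:GN} slice-wise (the paper takes $(\sigma,q,\theta,r)=(2,p,\tfrac p2,2)$) to interpolate the quadratic term between the $L^p$-energy of $D(u-g)$ and the temporal $\sup$ of the $L^2$-norm, control the first factor by the boundary Poincar\'e inequality applied with $r=48\varrho$ (your resolution of the radius mismatch is exactly right, since $B_{48\varrho/3}=B_{16\varrho}$ meets the complement) together with~\eqref{eq:scaling-quadratic-term-singular}, and control the $\sup$-term by the energy estimate of Lemma~\ref{lem:Caccioppoli-boundary}. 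However, there is a genuine gap at precisely the point where you stop: the Caccioppoli estimate for the $\sup$-term has on its right-hand side the term $|u-g|^2/(S-s)$ over a slightly larger cylinder, i.e.\ the very quantity you are trying to bound (up to the factor $\lambda^{p-2}$ coming from the intrinsic time-length). So after inserting the Caccioppoli bound into the Gagliardo--Nirenberg estimate you do \emph{not} arrive at a closed bound of the form you wrote, with some exponent of the scaling quantity to be determined by $\lambda$-bookkeeping; instead the quadratic integral reappears (raised to the power $\tfrac{2-p}{2}$) on the right.

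The missing idea is the absorption/iteration mechanism: one must run the whole argument on a family of concentric cylinders $Q^{(\lambda)}_{4\alpha\rho}(z_o)$, $1\le\alpha_1<\alpha_2\le2$, use $\alpha_2^2-\alpha_1^2\ge(\alpha_2-\alpha_1)^2$ and Young's inequality with exponents $\tfrac2p$ and $\tfrac2{2-p}$ twice (once inside the Caccioppoli bound to convert the $p$-power term into $\lambda^{p-2}|u-g|^2+\lambda^p$, and once after inserting everything back) to obtain
$$
\iint_{Q^{(\lambda)}_{4\alpha_1\rho}\cap E}|u-g|^2\,\dx\dt
\le \tfrac12\iint_{Q^{(\lambda)}_{4\alpha_2\rho}\cap E}|u-g|^2\,\dx\dt
+ c\,\lambda^2\varrho^2\,\babs{Q^{(\lambda)}_{48\rho}}\,(\alpha_2-\alpha_1)^{-\frac{2(2-p)}{p}},
$$
and then invoke the iteration Lemma~\ref{lem:iteration} to absorb the first term and conclude. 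Without this step the interpolation does not close, and this — not the $\lambda$-exponent arithmetic, which does come out to exactly $\lambda^2$ once the absorption is in place — is the actual crux. A further minor correction: the Gagliardo--Nirenberg application here only needs $p>\tfrac{2n}{n+2}$; the restriction $p>\tfrac{2(n+1)}{n+2}$ enters this lemma only through the availability of the energy estimate of Lemma~\ref{lem:Caccioppoli-boundary} (via the integration by parts formula), not through admissibility of the interpolation exponents as you anticipated.
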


\begin{proof}
Fix $1 \leq \alpha_1 < \alpha_2 \leq 2$.
Extending $u-g$ outside of $E$ by zero, and applying Lemma
\ref{lem:GN} slice-wise with $(\sigma,q,\theta,r) = \big(
2,p,\frac{p}{2},2 \big)$, which is possible since $p>\frac{2n}{n+2}$,
we obtain that
\begin{align}
	\iint_{Q_{4\alpha_1\rho}^{(\lambda)}(z_o) \cap E} &|u-g|^2 \,\dx\dt
	\nonumber \\ &\leq
	c \varrho^2 \int_{\Lambda_{4\alpha_1 \varrho}^{(\lambda)}(t_o)\cap(0,T)}
	\Bigg( \int_{B_{4\alpha_1\varrho}(x_o)} |D(u-g)|^p(t) + \bigg| \frac{u-g}{\varrho} \bigg|^p(t) \,\dx \Bigg)
	\nonumber \\ &\phantom{=} \qquad\qquad\qquad\qquad
	\cdot \Bigg( \bint_{B_{4\alpha_1\varrho}(x_o)} \bigg| \frac{u-g}{\varrho} \bigg|^2(t) \,\dx \Bigg)^\frac{2-p}{2} \,\dt
	\nonumber \\ &\leq
	c \varrho^p \Bigg(
                       \iint_{Q_{8\rho}^{(\lambda)}(z_o)\cap E} |D(u-g)|^p + \bigg| \frac{u-g}{\varrho} \bigg|^p \,\dx\dt \Bigg)
	\label{eq:quadratic-term-aux1}	
	\\ &\phantom{=}
	\cdot \Bigg( \sup_{t \in \Lambda_{4\alpha_1 \varrho}^{(\lambda)}(t_o)\cap(0,T)}
	\bint_{B_{4\alpha_1\varrho}(x_o)} |u-g|^2(t) \,\dx \Bigg)^\frac{2-p}{2}.
	\nonumber
\end{align}
Now, we estimate the terms on the right-hand side of \eqref{eq:quadratic-term-aux1} separately.
First, since we have assumed that $B_{16 \rho} (x_o) \setminus E^t \neq \varnothing$ for every $t \in \Lambda_{8 \rho}^{(\lambda)}(t_o) \cap (0,T)$, by applying Lemma \ref{lem:boundary-Poincare} with $r=48\varrho$ slice-wise, and using \eqref{eq:scaling-quadratic-term-singular}, we find that
\begin{align}
	\iint_{Q_{8\rho}^{(\lambda)}(z_o)\cap
                       E} &|D(u-g)|^p + \bigg| \frac{u-g}{\varrho} \bigg|^p \,\dx\dt
	\nonumber \\ &\leq
	\iint_{Q_{48\rho}^{(\lambda)}(z_o)\cap E} |D(u-g)|^p + \bigg| \frac{u-g}{\varrho} \bigg|^p \,\dx\dt
	\nonumber \\ &\leq
	c \iint_{Q_{48\rho}^{(\lambda)}(z_o)\cap E} |D(u-g)|^p \,\dx\dt
	\nonumber \\ &\leq
	c \big|Q_{48\varrho}^{(\lambda)} \big| \lambda^p.
	\label{eq:quadratic-term-aux2}
\end{align}
Next, we use the energy estimate in Lemma
\ref{lem:Caccioppoli-boundary}, which holds since
$p>\frac{2(n+1)}{n+2}$. Moreover, we use the fact that $\alpha_2^2 - \alpha_1^2 \geq (\alpha_2 - \alpha_1)^2$, Young's inequality with exponents $\frac{2}{p}$ and $\frac{2}{2-p}$ (which is possible, since $1<p<2$), and \eqref{eq:scaling-quadratic-term-singular} to conclude that
\begin{align}
	&\sup_{t \in \Lambda_{4\alpha_1 \varrho}^{(\lambda)}(t_o)\cap(0,T)}
	\bint_{B_{4\alpha_1\varrho}(x_o)} |u-g|^2(t) \,\dx
	\nonumber \\ &\leq
	\frac{c}{\big| B_{4\alpha_1\varrho(x_o)} \big|}
	\iint_{Q_{4\alpha_2\rho}^{(\lambda)}(z_o)\cap
                       E} \bigg| \frac{u-g}{(\alpha_2-\alpha_1)\varrho} \bigg|^p
	+ \lambda^{p-2} \bigg| \frac{u-g}{(\alpha_2-\alpha_1)\varrho} \bigg|^2 + G \,\dx\dt
	\nonumber \\ &\leq
	\frac{c \lambda^{2-p} \varrho^2}{\big| Q_{4\alpha_2\varrho}^{(\lambda)}(z_o) \big|}
	\iint_{Q_{4\alpha_2\rho}^{(\lambda)}(z_o)\cap
                       E} \lambda^{p-2} \bigg| \frac{u-g}{(\alpha_2-\alpha_1)\varrho} \bigg|^2 + \lambda^p + G \,\dx\dt
	\nonumber \\ &\leq
	\frac{c}{\big| Q_{4\alpha_2\varrho}^{(\lambda)}(z_o) \big|}
	\iint_{Q_{4\alpha_2\rho}^{(\lambda)}(z_o)\cap
                       E} \bigg| \frac{u-g}{\alpha_2-\alpha_1} \bigg|^2 \,\dx\dt
	+ c\lambda^2 \varrho^2.
	\label{eq:quadratic-term-aux3}
\end{align}
Inserting \eqref{eq:quadratic-term-aux2} and \eqref{eq:quadratic-term-aux3} into \eqref{eq:quadratic-term-aux1}, and applying Young's inequality with exponents $\frac{2}{p}$ and $\frac{2}{2-p}$ yields
\begin{align*}
	&\iint_{Q_{4\alpha_1\rho}^{(\lambda)}(z_o) \cap E} |u-g|^2 \,\dx\dt
	\\ &\leq
	c \big|Q_{48\varrho}^{(\lambda)} \big| \lambda^p \varrho^p
	\Bigg( \frac{1}{\big| Q_{4\alpha_2\varrho}^{(\lambda)}(z_o) \big|}
	\iint_{Q_{4\alpha_2\rho}^{(\lambda)}(z_o)\cap
                       E} \bigg| \frac{u-g}{\alpha_2-\alpha_1} \bigg|^2 \,\dx\dt
	+ c\lambda^2 \varrho^2 \Bigg)^\frac{2-p}{2}
	\\ &\leq
	\frac{1}{2} \iint_{Q_{4\alpha_2\rho}^{(\lambda)}(z_o) \cap E} |u-g|^2 \,\dx\dt
	+ c \lambda^2 \varrho^2 \big|Q_{48\varrho}^{(\lambda)} \big| (\alpha_2-\alpha_1)^{-\frac{2(2-p)}{p}},
\end{align*}
for any $1\le\alpha_1<\alpha_2\le 2$. 
Now, by Lemma \ref{lem:iteration} with $\theta = \frac{1}{2}$, $\phi \colon [1,2] \to \R_{\geq 0}$ given by
$$
	\phi(s) :=
	\iint_{Q_{4s\rho}^{(\lambda)}(z_o) \cap E} |u-g|^2 \,\dx\dt,
$$
$A = c \lambda^2 \varrho^2 \big|Q_{48\varrho}^{(\lambda)} \big|$, $\beta = \frac{2(2-p)}{p}$, and $C=0$, we deduce that
$$
	\iint_{Q_{4\rho}^{(\lambda)}(z_o) \cap E} \bigg| \frac{u-g}{\varrho} \bigg|^2 \,\dx\dt
	\leq
	c \big| Q_{48\varrho}^{(\lambda)} \big| \lambda^2
	\leq
	c \big| Q_{4\varrho}^{(\lambda)} \big| \lambda^2.
$$
This concludes the proof of the lemma.
\end{proof}

Now, we are ready to derive the reverse Hölder inequality for cylinders near the lateral boundary.
\begin{lemma} \label{lem:reverse-holder-lateral}
Let $u$ be a weak solution to \eqref{eq:pde} according to Definition~\ref{def:weak-sol-global} with lateral and initial boundary values $g \in \mathfrak{G}^{0}$ and
$g_o \in \mathfrak{G}^{0}_o$ that satisfy~\eqref{compatibility-lateral-initial}.
Assume that $E$ satisfies \eqref{assumption:one-sided-growth} with $\ell$ given by \eqref{assumption:function-one-sided-growth}, and \eqref{assumption:two-sided-growth}, and that $\R^n \setminus E^t$ is uniformly $p$-fat for every $t \in [0,T)$.
Furthermore, consider a cylinder $Q_\rho^{(\lambda)}(z_o) \subset \R^{n+1}$ with $0<\rho \leq 1$, $\lambda \geq 1$ and $\lambda^{2-p} \rho^2 \leq 1$, such that \eqref{eq:radius-small} and \eqref{eq:case-lateral} are satisfied, and suppose that
\begin{equation} \label{eq:boundary-rev-Holder-scaling-assumption}
\frac{1}{\babs{Q_{48\rho}^{(\lambda)}}} \iint_{Q_{48\rho}^{(\lambda)} \cap E} |D u|^p + G \, \d x \d t \leq \lambda^p \leq \frac{1}{\babs{Q_\rho^{(\lambda)}}} \iint_{Q_\rho^{(\lambda)} \cap E} |Du|^p + G \, \d x\d t.
\end{equation}
Then, denoting the exponent from Theorem~\ref{thm:fatness-improvement} by $\gamma = \gamma(n,p,\alpha)$, for every $\max\{\gamma, \hat p_*\} \leq q < p$ there exists a constant $c = c(n,p,C_o,C_1,\alpha) \geq 1$ such that
\begin{align*}
	&\frac{1}{\babs{Q_\rho^{(\lambda)}(z_o)}} \iint_{Q_\rho^{(\lambda)}(z_o) \cap E} |Du|^p\, \d x\d t
	\\ &\qquad\leq \left( \frac{c}{\babs{ Q_{48\rho}^{(\lambda)}(z_o)}}\iint_{Q_{48\rho}^{(\lambda)}(z_o) \cap E} |Du|^q\, \d x\d t \right)^\frac{p}{q} 
	+ \frac{c}{\babs{ Q_{48\rho}^{(\lambda)}(z_o)}} \iint_{Q_{48\rho}^{(\lambda)}(z_o) \cap E} G \, \d x \d t.
\end{align*}
\end{lemma}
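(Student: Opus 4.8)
The plan is to establish the reverse Hölder inequality via an intrinsic-geometry Caccioppoli-plus-Sobolev argument, following the cylindrical template of \cite{Parviainen-degenerate,BP} but using the boundary ingredients developed above. First I would fix concentric intrinsic sub-cylinders $Q_{4\rho}^{(\lambda)}(z_o) \subset Q_{8\rho}^{(\lambda)}(z_o)$ (and an even larger one up to radius $48\rho$, to have room for the Poincaré step which needs $r = 48\rho$), and apply the energy estimate of Lemma~\ref{lem:Caccioppoli-boundary} with these radii: since \eqref{eq:case-lateral} holds and $\rho$ satisfies \eqref{eq:radius-small}, Lemma~\ref{lem:ball-complement-intersect} guarantees $B_{16\rho}(x_o)\setminus E^t\neq\varnothing$ for every $t\in\Lambda_{8\rho}^{(\lambda)}(t_o)\cap(0,T)$, so the boundary Poincaré inequality of Lemma~\ref{lem:boundary-Poincare} is available slice-wise with exponent $q$ in the range $\max\{\gamma,\hat p_*\}\le q<p$. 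The output is a bound of the form
\begin{align*}
	\sup_{t}\bint_{B\cap E^t}|u-g|^2(t)\,\dx + \biint_{Q_{4\rho}^{(\lambda)}\cap E}|Du|^p\,\dx\dt
	\le c\lambda^p,
\end{align*}
where the control of the $\frac{|u-g|^2}{S-s}$-term uses the left inequality in \eqref{eq:boundary-rev-Holder-scaling-assumption}: for $p\ge2$ one bounds $|u-g|^2 \le$ a multiple of $|u-g|^p\rho^{\cdot}$ after rescaling, while for $1<p<2$ this quadratic term is exactly what Lemma~\ref{lem:quadratic-term-singular} controls (its hypothesis \eqref{eq:scaling-quadratic-term-singular} is the left half of \eqref{eq:boundary-rev-Holder-scaling-assumption}).

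Next I would run the parabolic Sobolev (Gagliardo–Nirenberg) interpolation. Extending $u-g$ by zero outside $E$ and applying Lemma~\ref{lem:GN} slice-wise with the triple $(\sigma,q,r)$ chosen so that the space-average of $|D(u-g)|^p$ is interpolated between the space-average of $|D(u-g)|^q$ (with $q<p$) and the sup-in-time of a low-power norm of $u-g$, one obtains
\begin{align*}
	\biint_{Q_{4\rho}^{(\lambda)}\cap E}|Du|^p\,\dx\dt
	\le c\,\Big(\sup_t\bint_{B\cap E^t}|u-g|^2(t)\,\dx\Big)^{\theta}
	\Big(\biint_{Q_{8\rho}^{(\lambda)}\cap E}\big(|D(u-g)|^q + |\tfrac{u-g}{\rho}|^q\big)\,\dx\dt\Big)^{1-\theta}\!\!+ \text{(lower order)},
\end{align*}
after which the boundary Poincaré inequality converts the $|\tfrac{u-g}{\rho}|^q$ term into $|D(u-g)|^q$, and one feeds in the Caccioppoli bound $c\lambda^p$ for the sup-term. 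The scaling $\lambda^p\approx \biint|Du|^p$ from the right inequality in \eqref{eq:boundary-rev-Holder-scaling-assumption} then lets one absorb the surviving power of $\lambda$ and re-express everything in terms of $\big(\biint|Du|^q\big)^{p/q}$ plus the $G$-average; a standard splitting $|D(u-g)|^q \lesssim |Du|^q + |Dg|^q \lesssim |Du|^q + G$ and replacing $|u-g|$ by the triangle inequality where needed handles the boundary datum. Finally I would enlarge the domain of integration on the right from $Q_{8\rho}^{(\lambda)}$ to $Q_{48\rho}^{(\lambda)}$ (only increasing it) to match the claimed form.

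The main obstacle I expect is the bookkeeping of the intrinsic scaling across the two regimes $p\ge2$ and $1<p<2$: the powers of $\lambda$ and $\rho$ generated by the Caccioppoli term, the Gagliardo–Nirenberg exponent $\theta$, and the normalizations $|Q_{4\rho}^{(\lambda)}| = c\lambda^{2-p}\rho^{n+2}$ must cancel exactly so that no stray power of $\lambda$ remains (otherwise the inequality is not homogeneous and cannot close). This is precisely where the restriction $p>\frac{2(n+1)}{n+2}$ and the admissibility condition on $\theta$ in Lemma~\ref{lem:GN} are used, and in the singular case one must be careful that Lemma~\ref{lem:quadratic-term-singular} is invoked with radii consistent with those in the Caccioppoli and Sobolev steps. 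A secondary technical point is ensuring that the boundary Poincaré inequality is legitimately applied slice-wise: for a.e.\ $t$ one has $(u-g)(t)\in W^{1,p}_0(E^t,\R^N)$ by Definition~\ref{def:weak-sol-global}, and $B_{16\rho}(x_o)\setminus E^t\neq\varnothing$ forces $B_{48\rho/3}(x_o)\setminus E^t\neq\varnothing$, so the hypothesis $B_{r/3}(x_o)\setminus E^t\neq\varnothing$ of Lemma~\ref{lem:boundary-Poincare} with $r=48\rho$ is met uniformly in $t$; the uniform-in-$t$ fatness constant $\alpha$ then yields a constant independent of the slice.
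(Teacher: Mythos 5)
There is a genuine gap at the centre of your plan: the interpolation step as you state it is a false inequality. Gagliardo--Nirenberg (Lemma \ref{lem:GN}) bounds a norm of the \emph{function} $u-g$ by a gradient norm and a lower-order norm of $u-g$; it cannot bound the higher gradient average $\biint |Du|^p$ (or $|D(u-g)|^p$) by the lower gradient average $\biint |D(u-g)|^q$ with $q<p$ times a power of $\sup_t\bint|u-g|^2$. No Sobolev-type inequality can: for general $W^{1,p}$ functions one may make $\|v\|_{L^2}$ and $\|Dv\|_{L^q}$ moderate while $\|Dv\|_{L^p}$ is arbitrarily large (highly oscillatory $v$ of small amplitude). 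A gain of integrability on the gradient is exactly what the reverse H\"older inequality asserts, and it can only come from the equation. In your outline the Caccioppoli estimate is consumed in step one to produce the bound $\sup_t\bint|u-g|^2+\biint_{Q_{4\rho}^{(\lambda)}\cap E}|Du|^p\le c\lambda^p$, and step two then asks Gagliardo--Nirenberg to upgrade $c\lambda^p$ to the $\big(\biint|Du|^q\big)^{p/q}$-form; that step does not exist, so the proof does not close.

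The correct order of operations (which is the paper's) is: first use Lemma \ref{lem:Caccioppoli-boundary} on $Q_\rho^{(\lambda)}\subset Q_{2\rho}^{(\lambda)}$ to dominate $\frac{1}{|Q_\rho^{(\lambda)}|}\iint_{Q_\rho^{(\lambda)}\cap E}|Du|^p$ by the \emph{zero-order} terms $\lambda^{p-\sigma}\biint\big|\frac{u-g}{\rho}\big|^\sigma$, $\sigma\in\{2,p\}$, plus the $G$-term; only then apply Lemma \ref{lem:GN} slice-wise \emph{to these zero-order terms} with $r=2$, $\theta=\frac q\sigma$ (admissible since $q\ge\hat p_*$), which produces the $q$-energy to the first power times $\big(\sup_t\bint|\frac{u-g}{\rho}|^2\big)^{\frac{\sigma-q}{2}}$; bound the sup-factor by $c\lambda^2$ using a second application of the Caccioppoli estimate together with the boundary Poincar\'e inequality and Jensen for $p\ge2$, respectively Lemma \ref{lem:quadratic-term-singular} for $\frac{2(n+1)}{n+2}<p<2$ (the case $\sigma=2<p$ is handled separately by Young, $\mathrm{I}_2\le\eps\lambda^p+c(\eps)\mathrm{I}_p$); finally apply Young with exponents $\frac pq,\frac{p}{p-q}$ and absorb the resulting $c\eps\lambda^p$ using the right-hand inequality of \eqref{eq:boundary-rev-Holder-scaling-assumption}. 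Your remaining ingredients are the right ones and match the paper (Lemma \ref{lem:ball-complement-intersect} to get $B_{16\rho}(x_o)\setminus E^t\neq\varnothing$ for all relevant $t$, the slice-wise Poincar\'e inequality with $r=48\rho$, the use of the left half of \eqref{eq:boundary-rev-Holder-scaling-assumption} as hypothesis \eqref{eq:scaling-quadratic-term-singular}, and the final absorption), but as written the argument hinges on a non-existent gradient interpolation and therefore has to be restructured as above.
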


\begin{proof}
From the energy estimate in Lemma \ref{lem:Caccioppoli-boundary} we obtain that
\begin{align}
	\frac{1}{\big| Q_\varrho^{(\lambda)}(z_o) \big|} &\iint_{Q_\varrho^{(\lambda)}(z_o) \cap E} |Du|^p \,\dx\dt
	\nonumber \\ &\leq
	\frac{c}{\big| Q_{2\varrho}^{(\lambda)}(z_o) \big|}
	\iint_{Q_{2\varrho}^{(\lambda)}(z_o) \cap E} \bigg| \frac{u-g}{\varrho} \bigg|^p \,\dx\dt
	\label{eq:rev-Holder-boundary-aux1} \\ &\phantom{=}
	+ \frac{c \lambda^{p-2}}{\big| Q_{2\varrho}^{(\lambda)}(z_o) \big|}
	\iint_{Q_{2\varrho}^{(\lambda)}(z_o) \cap E} \bigg| \frac{u-g}{\varrho} \bigg|^2 \,\dx\dt
	\nonumber \\ & \phantom{=}
	+\frac{c}{\big| Q_{2\varrho}^{(\lambda)}(z_o) \big|}
	\iint_{Q_{2\varrho}^{(\lambda)}(z_o) \cap E} G \,\dx\dt.
	\nonumber
\end{align}
In the following, we estimate the terms on the right-hand side of \eqref{eq:rev-Holder-boundary-aux1} separately.
First, for $\sigma \in \{2,p\}$ we write
$$
	\mathrm{I}_\sigma :=
	\frac{\lambda^{p-\sigma}}{\big| Q_{2\varrho}^{(\lambda)}(z_o) \big|}
	\iint_{Q_{2\varrho}^{(\lambda)}(z_o) \cap E} \bigg| \frac{u-g}{\varrho} \bigg|^\sigma \,\dx\dt,
$$
and fix $q \in [ \max\{ \gamma, \hat{p}_\ast \}, p)$.
First, we estimate this term in the case $\sigma\ge p$. We
use $(u-g)(t) \in W^{1,q}(B_{2\varrho}(x_o))$ with $u-g=0$ in
$\R^n \setminus E^t$ $\mathcal{L}^n$-a.e.~for a.e.~$t \in
\Lambda_{2\varrho}^{(\lambda)}(t_o)\cap(0,T)$, and apply Lemma
\ref{lem:GN} with $r=2$ and $\theta = \frac{q}{\sigma}$
slice-wise. Note that this is admissible, since $q \geq \hat{p}_\ast$ and
$\theta<\frac{p}{\sigma}\le1$ in the present case $\sigma\ge p$.
In this way, we conclude that
\begin{align}
	\mathrm{I}_\sigma
	&=
	\frac{\lambda^{p-\sigma}}{\big|
   \Lambda_{2\varrho}^{(\lambda)}(t_o) \big|} \int_{\Lambda_{2\varrho}^{(\lambda)}(t_o)\cap(0,T)}
	\bint_{B_{2\varrho}(x_o)} \bigg| \frac{u-g}{\varrho} \bigg|^\sigma(t) \,\dx \,\dt
	\nonumber \\ &\leq
	\frac{c \lambda^{p-\sigma}}{\big| \Lambda_{2\varrho}^{(\lambda)}(t_o) \big|} \int_{\Lambda_{2\varrho}^{(\lambda)}(t_o)\cap(0,T)}
	\Bigg( \bint_{B_{2\varrho}(x_o)} |D(u-g)|^q(t) + \bigg| \frac{u-g}{\varrho} \bigg|^q(t) \,\dx \Bigg)
	\nonumber \\ &\phantom{=} \qquad\qquad \qquad\qquad\qquad
	\cdot \Bigg( \bint_{B_{2\varrho}(x_o)} \bigg| \frac{u-g}{\varrho} \bigg|^2(t) \,\dx \Bigg)^\frac{\sigma-q}{2} \,\dt
	\nonumber \\ &\leq
	c \lambda^{p-\sigma}
	\Bigg( \frac{c}{\big| Q_{2\varrho}^{(\lambda)}(z_o) \big|} \iint_{Q_{2\varrho}^{(\lambda)}(z_o) \cap E} |D(u-g)|^q + \bigg| \frac{u-g}{\varrho} \bigg|^q \,\dx\dt \Bigg)
	\label{eq:rev-Holder-boundary-aux2}\\ &\phantom{=}\qquad\qquad
	\cdot \Bigg( \sup_{t \in \Lambda_{2\varrho}^{(\lambda)}(t_o)\cap(0,T)} \bint_{B_{2\varrho}(x_o)} \bigg| \frac{u-g}{\varrho} \bigg|^2(t) \,\dx \Bigg)^\frac{\sigma-q}{2}.
	\nonumber
\end{align}
Now, we estimate the terms on the right-hand side of \eqref{eq:rev-Holder-boundary-aux2} separately.
First, note that $B_{16\varrho}(x_o) \setminus E^t \neq \varnothing$ for all $t \in \Lambda_{8\varrho}^{(\lambda)}(t_o) \cap (0,T)$ by \eqref{eq:case-lateral} and Lemma \ref{lem:ball-complement-intersect}.
Therefore, applying Lemma \ref{lem:boundary-Poincare} with $r=48 \varrho$ slice-wise to the first term on the right-hand side of \eqref{eq:rev-Holder-boundary-aux2}, we find that
\begin{align*}
	\frac{c}{\big| Q_{2\varrho}^{(\lambda)}(z_o) \big|} &\iint_{Q_{2\varrho}^{(\lambda)}(z_o) \cap E} |D(u-g)|^q + \bigg| \frac{u-g}{\varrho} \bigg|^q \,\dx\dt
	\\ &\leq
	\frac{c}{\big| Q_{2\varrho}^{(\lambda)}(z_o) \big|} 
	\iint_{ [ B_{48\varrho}(x_o) \times \Lambda_{2\varrho}^{(\lambda)}(t_o) ] \cap E} |D(u-g)|^q + \bigg| \frac{u-g}{\varrho} \bigg|^q \,\dx\dt
	\\ &\leq
	\frac{c}{\big| Q_{2\varrho}^{(\lambda)}(z_o) \big|} 
	\iint_{[ B_{48\varrho}(x_o) \times \Lambda_{2\varrho}^{(\lambda)}(t_o) ] \cap E} |D(u-g)|^q \,\dx\dt
	\\ &\leq
	\frac{c}{\big| Q_{48\varrho}^{(\lambda)}(z_o) \big|} \iint_{Q_{48\varrho}^{(\lambda)}(z_o) \cap E} |D(u-g)|^q \,\dx\dt.
\end{align*}
Next, applying the energy estimate from Lemma \ref{lem:Caccioppoli-boundary} to the second term on the right-hand side of \eqref{eq:rev-Holder-boundary-aux2} yields
\begin{align*}
	\sup_{t \in \Lambda_{2\varrho}^{(\lambda)}(t_o)\cap(0,T)} &\bint_{B_{2\varrho}(x_o)} \bigg| \frac{u-g}{\varrho} \bigg|^2(t) \,\dx 
	\\ &\leq
	\frac{c}{\big| Q_{4\varrho}^{(\lambda)}(z_o) \big|}
	\iint_{Q_{4\varrho}^{(\lambda)}(z_o) \cap E}
	\lambda^{2-p} \bigg| \frac{u-g}{\varrho} \bigg|^p
	+ \bigg| \frac{u-g}{\varrho} \bigg|^2
	+ \lambda^{2-p} G \,\dx\dt.
\end{align*}
In the case $\frac{2(n+1)}{n+2} < p <2 $, we apply Young's inequality
with exponents $\frac{2}{2-p}$ and $\frac{2}{p}$ and then
Lemma~\ref{lem:quadratic-term-singular} to estimate the quadratic term
on the right-hand side and assumption
\eqref{eq:boundary-rev-Holder-scaling-assumption}. This leads to the bound 
\begin{align*}
	\sup_{t \in \Lambda_{2\varrho}^{(\lambda)}(t_o)\cap(0,T)} &\bint_{B_{2\varrho}(x_o)} \bigg| \frac{u-g}{\varrho} \bigg|^2(t) \,\dx 
	\\ &\leq
	\frac{c}{\big| Q_{4\varrho}^{(\lambda)}(z_o) \big|}
	\iint_{Q_{4\varrho}^{(\lambda)}(z_o) \cap E}
             \bigg| \frac{u-g}{\varrho} \bigg|^2
             +\lambda^2
             + \lambda^{2-p} G \,\dx\dt
         \le c\lambda^2.    
\end{align*}
In the case $p \geq 2$, using that $u-g=0$ a.e.~in $\R^{n+1} \setminus E$, Jensen's inequality gives us that
\begin{align*}
	\sup_{t \in \Lambda_{2\varrho}^{(\lambda)}(t_o)\cap(0,T)} &\bint_{B_{2\varrho}(x_o)} \bigg| \frac{u-g}{\varrho} \bigg|^2(t) \,\dx 
	\\ &\leq
	\frac{c \lambda^{2-p}}{\big| Q_{4\varrho}^{(\lambda)}(z_o) \big|}
	\iint_{Q_{4\varrho}^{(\lambda)}(z_o) \cap E}
	\bigg| \frac{u-g}{\varrho} \bigg|^p \,\dx\dt
	\\ &\phantom{=}
	+ \Bigg( \frac{c}{\big| Q_{4\varrho}^{(\lambda)}(z_o) \big|}
	\iint_{Q_{4\varrho}^{(\lambda)}(z_o) \cap E}
	\bigg| \frac{u-g}{\varrho} \bigg|^p \,\dx\dt \Bigg)^\frac{2}{p}
	\\ &\phantom{=}
	+ \frac{c \lambda^{2-p}}{\big| Q_{4\varrho}^{(\lambda)}(z_o) \big|}
	\iint_{Q_{4\varrho}^{(\lambda)}(z_o) \cap E} G \,\dx\dt.
\end{align*}
Therefore, applying Lemma \ref{lem:boundary-Poincare} slice-wise to estimate the first and second term on the right-hand side of the predecing inequality, and using \eqref{eq:boundary-rev-Holder-scaling-assumption}, we deduce that
\begin{align*}
	\sup_{t \in \Lambda_{2\varrho}^{(\lambda)}(t_o)\cap(0,T)} &\bint_{B_{2\varrho}(x_o)} \bigg| \frac{u-g}{\varrho} \bigg|^2(t) \,\dx 
	\\ &\leq
	\frac{c\lambda^{2-p}}{\big| Q_{48\varrho}^{(\lambda)}(z_o) \big|} \iint_{Q_{48\varrho}^{(\lambda)}(z_o) \cap E} |Du|^p + G \,\dx\dt
	\\ &\phantom{=}
	+ \Bigg( \frac{c}{\big| Q_{48\varrho}^{(\lambda)}(z_o) \big|} \iint_{Q_{48\varrho}^{(\lambda)}(z_o) \cap E} |Du|^p + G \,\dx\dt \Bigg)^\frac{2}{p}
	\\ &\phantom{=}
	+ \frac{c \lambda^{2-p}}{\big| Q_{48\varrho}^{(\lambda)}(z_o) \big|}
	\iint_{Q_{48\varrho}^{(\lambda)}(z_o) \cap E} G \,\dx\dt
	\\ &\leq
	c \lambda^2.
\end{align*}
Inserting the preceding estimates into \eqref{eq:rev-Holder-boundary-aux2}, we obtain that
\begin{align}\label{est-I-sigma}
	\mathrm{I}_\sigma
	&\leq
	\frac{c \lambda^{p-q}}{\big| Q_{48\varrho}^{(\lambda)}(z_o) \big|} \iint_{Q_{48\varrho}^{(\lambda)}(z_o) \cap E} |Du|^q + G^\frac{q}{p} \,\dx\dt
\end{align}
in the case $\sigma\ge p$. It remains to consider a
parameter 
$\sigma\in\{2,p\}$ with $\sigma<p$, which only occurs if
$\sigma=2<p$. In this case we use Young's inequality with exponents
$\frac{p}{p-2}$ and $\frac{p}{2}$, Hölder's inequality, and then \eqref{est-I-sigma} with
$\sigma=p$ to estimate
\begin{align*}
  \mathrm{I}_2
  &\le\eps\lambda^p+\hat c(\eps)\mathrm{I}_p\\
  &\le \eps\lambda^p+\hat c(\eps)\frac{c \lambda^{p-q}}{\big| Q_{48\varrho}^{(\lambda)}(z_o) \big|} \iint_{Q_{48\varrho}^{(\lambda)}(z_o) \cap E} |Du|^q + G^\frac{q}{p} \,\dx\dt
\end{align*}
for any $\eps>0$. Therefore, in any case we obtain 
\begin{align*}
  \mathrm{I}_\sigma
  &\leq \eps\lambda^p+\hat c(\eps)\frac{c \lambda^{p-q}}{\big| Q_{48\varrho}^{(\lambda)}(z_o) \big|} \iint_{Q_{48\varrho}^{(\lambda)}(z_o) \cap E} |Du|^q + G^\frac{q}{p} \,\dx\dt\\
        &\leq
	c \varepsilon \lambda^p
	+ \Bigg( \frac{c \hat{c}(\varepsilon)}{\big| Q_{48\varrho}^{(\lambda)}(z_o) \big|} \iint_{Q_{48\varrho}^{(\lambda)}(z_o) \cap E} |Du|^q + G^\frac{q}{p} \,\dx\dt \Bigg)^\frac{p}{q}.
\end{align*}
In the last step we applied Young's inequality with exponents $\frac{p}{q}$ and $\frac{p}{p-q}$.
Using the preceding estimate in \eqref{eq:rev-Holder-boundary-aux1}, we end up with
\begin{align*}
	\frac{1}{\big| Q_\varrho^{(\lambda)}(z_o) \big|} &\iint_{Q_\varrho^{(\lambda)}(z_o) \cap E} |Du|^p \,\dx\dt
	\nonumber \\ &\leq
	c \varepsilon \lambda^p
	+ \Bigg( \frac{c \hat{c}(\varepsilon) }{\big| Q_{48\varrho}^{(\lambda)}(z_o) \big|} \iint_{Q_{48\varrho}^{(\lambda)}(z_o) \cap E} |Du|^q \,\dx\dt \Bigg)^\frac{p}{q}
	\\ &\phantom{=}
	+ \frac{c \hat{c}(\varepsilon) }{\big| Q_{48\varrho}^{(\lambda)}(z_o) \big|} \iint_{Q_{48\varrho}^{(\lambda)}(z_o) \cap E} G \,\dx\dt.
\end{align*}
Thus, by \eqref{eq:boundary-rev-Holder-scaling-assumption}, we infer
\begin{align*}
	&\frac{1}{\big| Q_\varrho^{(\lambda)}(z_o) \big|} \iint_{Q_\varrho^{(\lambda)}(z_o) \cap E} |Du|^p \,\dx\dt
	\nonumber \\ &\leq
	\frac{c \varepsilon}{\big| Q_\varrho^{(\lambda)}(z_o) \big|} \iint_{Q_\varrho^{(\lambda)}(z_o) \cap E} |Du|^p \,\dx\dt
	+ \Bigg( \frac{c \hat{c}(\varepsilon)}{\big| Q_{48\varrho}^{(\lambda)}(z_o) \big|} \iint_{Q_{48\varrho}^{(\lambda)}(z_o) \cap E} |Du|^q \,\dx\dt \Bigg)^\frac{p}{q}
	\\ &\phantom{=}
	+ \frac{c \hat{c}(\varepsilon)}{\big| Q_{48\varrho}^{(\lambda)}(z_o) \big|} \iint_{Q_{48\varrho}^{(\lambda)}(z_o) \cap E} G \,\dx\dt.
\end{align*}
Choosing $\varepsilon>0$ small enough and absorbing the first term on the right-hand side into the left-hand side, we conclude the proof.
\end{proof}

\subsection{Near the initial boundary} \label{sec:rev-Holder-initial}
In the case \eqref{eq:case-initial}, the cylinder $Q_{8\rho}^{(\lambda)}(z_o)$ does not intersect the lateral boundary.
Therefore, this case can overall be treated as the initial boundary case in cylindrical domains, see \cite[Lemma 5.6]{BP}.
We only need two modifications to adapt the proof to our setting.
First of all, since we consider a general right-hand side $F$, we get additional terms in the scaling assumption \eqref{scaling-initial} and the conclusion of the lemma compared to the estimates for $F \equiv 0$ in \cite{BP}.
Note that these terms are included in the definition of $G$ in~\eqref{eq:definition-G}. 
Further, our growth assumption~\eqref{assumption:A}$_2$ is slightly less general than
the corresponding one in \cite[(2.3)]{BP} in the sense that we do not
allow an additive constant on the right-hand side.
Therefore, in contrast to \cite{BP}, we do not have to add the constant $1$ in the definition of $G$.

As in \cite{BP}, our scaling~\eqref{scaling-initial} involves the lateral boundary values via the definition of $G$ in~\eqref{eq:definition-G}.
This has the advantage that we can use the same scaling close to the initial boundary as close to the lateral boundary in Section \ref{sec:reverse-Holder-application}.

\begin{lemma} \label{lem:reverse-holder-initial}
Suppose that $u$ is a weak solution to~\eqref{eq:pde} according to Definition~\ref{def:weak-sol-global}. Moreover, let $Q_{\rho}^{(\lambda)} = Q_{\rho}^{(\lambda)}(z_o) \subset \R^{n+1}$ with $\rho \in (0,1]$ and $\lambda > 0$ such that \eqref{eq:case-initial} holds true.
Suppose that
\begin{equation}\label{scaling-initial}
\frac{1}{|Q_{8\rho}^{(\lambda)}|} \iint_{Q_{8\rho}^{(\lambda)}\cap E} |Du|^p + G \d x \d t \leq \lambda^p \leq \frac{1}{|Q_{\rho}^{(\lambda)}|} \iint_{Q_{\rho}^{(\lambda)}\cap E} |Du|^p + G \, \d x \d t,
\end{equation}
where $G$ is given by \eqref{eq:definition-G}.
Then, there exists a constant $c = c(n,p,C_o,C_1) > 0$ such that 
\begin{align*}
	\frac{1}{\big|Q_{\rho}^{(\lambda)} \big|} &\iint_{Q_{\rho}^{(\lambda)}\cap E} |Du|^p \, \d x \d t
	\\ &\leq
	\left( \frac{c}{\big| Q_{4\rho}^{(\lambda)} \big|} \iint_{Q_{4\rho}^{(\lambda)}\cap E} |D u|^q \, \d x \d t \right)^\frac{p}{q}
	+ \frac{c}{\big| Q_{4\rho}^{(\lambda)} \big|} \iint_{Q_{4\rho}^{(\lambda)} \cap E} G \, \d x \d t
	\\ &\phantom{=}
	+ c \lambda^{p - \hat p} \left( \bint_{B_{4 \rho}} |D g_o|^{\hat{p}_*} \, \d x \right)^\frac{\hat{p}}{\hat{p}_*}.
\end{align*}
holds true for every $\max \{ p-1, \hat p_* \} \leq q < p$.
\end{lemma}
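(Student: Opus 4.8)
The plan is to reduce the statement to the reverse H\"older inequality near the initial boundary for cylindrical domains and to follow \cite[Lemma~5.6]{BP}, incorporating the two adaptations explained before the lemma: the terms stemming from the source $F$ are already contained in $G$ as defined in~\eqref{eq:definition-G}, and, since the growth bound~\eqref{assumption:A} carries no additive constant, no constant has to be added to $G$. Under~\eqref{eq:case-initial} the region $Q_{8\rho}^{(\lambda)}(z_o)\cap E$ equals the space--time cylinder $B_{8\rho}(x_o)\times\big(\Lambda_{8\rho}^{(\lambda)}(t_o)\cap(0,T)\big)$ over the full ball $B_{8\rho}(x_o)$; it touches the initial boundary $E^0\times\{0\}$, because $0\in\Lambda_{2\rho}^{(\lambda)}(t_o)$, but it does not meet the lateral boundary. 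Hence, on this set, $u$ is a weak solution of a Cauchy--Dirichlet problem on a cylindrical domain with initial datum $g_o$ on $B_{8\rho}(x_o)$, and the whole cylindrical toolbox applies.

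First I would establish an energy estimate near the initial boundary. Extending $g_o$ so that it does not depend on time and writing $\xi:=(g_o)_{x_o;8\rho}$, I would test the weak formulation~\eqref{eq:weak-pde} with $\eta^p\zeta(u-\xi)$, where $\eta$ is a cut-off between concentric balls with $|D\eta|\lesssim(R-r)^{-1}$ and $\zeta$ is a Lipschitz cut-off in time that equals $1$ up to the time slice under consideration and vanishes near the top of the larger cylinder; this test function is admissible since only a constant vector is subtracted from $u$, and the parabolic term is treated after a time-mollification argument as in Lemma~\ref{lem:Caccioppoli-boundary}. The initial condition~\eqref{eq:initial-cond} together with the compatibility~\eqref{compatibility-lateral-initial} turns the contribution at $t=0$ into $\tfrac12\int\eta^p|g_o-\xi|^2\,\dx$, which by the Sobolev--Poincar\'e inequality is controlled by a suitable power of $\|Dg_o\|_{L^{\hat p_*}(B_{8\rho})}$; here one uses $\hat p_*\ge\tfrac{n\hat p}{n+2}$, so that $W^{1,\hat p_*}\hookrightarrow L^{2}$. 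Together with~\eqref{assumption:A} and Young's inequality for the diffusion and source parts, this produces a Caccioppoli-type inequality bounding $\sup_t\int_{B_r\cap E^t}|u-\xi|^2\,\dx+\iint_{Q_{r,s}\cap E}|Du|^p\,\dx\dt$ by $\iint_{Q_{R,S}\cap E}\big(\tfrac{|u-\xi|^p}{(R-r)^p}+\tfrac{|u-\xi|^2}{S-s}+G\big)\,\dx\dt$ plus a term involving $\big(\bint_{B_{8\rho}}|Dg_o|^{\hat p_*}\,\dx\big)^{\hat p/\hat p_*}$ (times the appropriate power of $\lambda$ coming from the intrinsic scaling).

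Next I would run the usual reverse H\"older argument: the Gagliardo--Nirenberg inequality (Lemma~\ref{lem:GN}) applied slice-wise with exponents tied to some $q\in[\max\{p-1,\hat p_*\},p)$, combined with the spatial Sobolev--Poincar\'e inequality (the balls being interior in space, the spatial mean of $u-\xi$ may be subtracted freely) and the $L^\infty_t L^2_x$-bound from the energy estimate, interpolates the zero-order term $\iint|u-\xi|^p$ against $\big(\iint|Du|^q+\cdots\big)^{p/q}$. Plugging this back into the Caccioppoli inequality, using the left inequality in~\eqref{scaling-initial} to dominate the $G$- and $g_o$-contributions by $\lambda^p$ (respectively by $c\lambda^{p-\hat p}$ times the $g_o$-term), and distinguishing $p\ge2$, where the quadratic term is absorbed by Jensen's inequality, from $\tfrac{2(n+1)}{n+2}<p<2$, where the quadratic term is treated by Young's inequality and an iteration step in the spirit of Lemma~\ref{lem:quadratic-term-singular}, one arrives at the asserted estimate after absorbing a term $\tfrac{c\eps}{|Q_\rho^{(\lambda)}|}\iint_{Q_\rho^{(\lambda)}\cap E}|Du|^p\,\dx\dt$ into the left-hand side. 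The restrictions $q\ge\hat p_*$ and $q\ge p-1$ are exactly what is needed for the Gagliardo--Nirenberg interpolation and for the treatment of the quadratic term, respectively.

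The main obstacle is the low regularity of the initial datum. Since $g_o$ belongs only to $W^{1,\hat p_*}$ with $\hat p_*<p$, one cannot test with $u-g_o$; replacing it by the constant $\xi=(g_o)_{x_o;8\rho}$ forces one to control the resulting boundary term $\int|g_o-\xi|^2$, and later the discrepancy between $\xi$ and the spatial means of $u$, solely through $\|Dg_o\|_{L^{\hat p_*}}$ via the Sobolev embedding $W^{1,\hat p_*}\hookrightarrow L^{\hat p}$. Tracking this embedding carefully, together with the interplay between the intrinsic scaling $\lambda^{2-p}\rho^2$ and the relevant dimensional exponents, is what produces precisely the factor $\lambda^{p-\hat p}$ and the exponent $\hat p/\hat p_*$ in the conclusion, and one must check that these scalings conspire so that the estimate indeed closes after invoking~\eqref{scaling-initial}.
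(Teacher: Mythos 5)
Your proposal takes essentially the same route as the paper: the paper gives no independent proof of this lemma but argues exactly as you do, namely that under \eqref{eq:case-initial} the cylinder $Q_{8\rho}^{(\lambda)}(z_o)$ avoids the lateral boundary, so the cylindrical initial-boundary argument of \cite[Lemma 5.6]{BP} applies with only the two modifications you list (the $F$-terms being absorbed into $G$, and no additive constant in $G$ because of \eqref{assumption:A}). Your further sketch of that argument (energy estimate against $u-(g_o)_{x_o;8\rho}$ with the initial term controlled via Sobolev--Poincar\'e and $\hat p_*$, Gagliardo--Nirenberg interpolation, and the $p\ge 2$ versus singular-range case distinction) is consistent with the cited proof.
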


\subsection{Interior case} \label{sec:rev-Holder-interior}
Now, we consider the case \eqref{eq:case-interior}, in which
$Q_{2\rho}^{(\lambda)}(z_o)\subset E$. Since this cylinder neither
intersects the lateral nor the initial boundary, we can use the same
arguments as in the interior case. Therefore, we obtain the following
lemma, whose proof is a straightforward modification of the arguments
leading to \cite[Lemma~13]{Boegelein:1}. Similarly to the
initial boundary case in Section \ref{sec:rev-Holder-initial}, we consider the scaling~\eqref{scaling-interior} involving the
lateral boundary data in order to make it compatible with the
scalings in the other two cases treated in Sections \ref{sec:rev-Holder-lateral} \& \ref{sec:rev-Holder-initial}. 

\begin{lemma} \label{lem:reverse-holder-interior}
Let  $u$ be a weak solution to~\eqref{eq:pde}
such that $Q_{2\rho}^{(\lambda)}(z_o) \subset E$
with $\rho \in (0,1]$ and $\lambda > 0$. Suppose that 
\begin{equation}
	\biint_{ Q_{2\rho}^{(\lambda)} } |D u|^p + G \, \d
x \d t \leq \lambda^p \leq \biint_{Q_\rho^{(\lambda)}}
|Du|^p +  G \, \d x\d t\label{scaling-interior}
\end{equation}
holds true, where $G$ is given by \eqref{eq:definition-G}. Then, there
exists a positive constant $c = c (n,p,C_o,C_1)$ such that
\begin{align*}
\biint_{Q_\rho^{(\lambda)}} |Du|^p\, \d x\d t &\leq c \left( \biint_{Q_{2\rho}^{(\lambda)} } |Du|^q\, \d x\d t \right)^\frac{p}{q} + c \biint_{Q_{2\rho}^{(\lambda)}} G \, \d x \d t,
\end{align*}
for every $\max \{p-1, \hat{p}_*\} \leq q < p$.
\end{lemma}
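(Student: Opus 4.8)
Since $Q_{2\rho}^{(\lambda)}(z_o)\subset E$ by~\eqref{eq:case-interior}, the cylinder touches neither the lateral nor the initial boundary, so on $Q_{2\rho}^{(\lambda)}$ we may argue exactly as for a genuine interior solution in a cylindrical domain; the function $G$ enters only via $|F|^{p}\le G$ in the energy estimate and --- for compatibility with the covering argument of Section~\ref{sec:reverse-Holder-application} --- via the scaling assumption~\eqref{scaling-interior}. The plan is to follow the scheme of \cite[Lemma~13]{Boegelein:1}. First I would fix a mean value $a$ of $u$ over the enclosing cylinder (a slice-wise spatial average, whose time oscillation is handled through the equation, or a time-mollified variant) and apply the interior Caccioppoli estimate. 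With~\eqref{assumption:A} and Young's inequality this gives, after rescaling lengths by $\varrho$ and normalising the integrals (and writing $Q_{2\rho}^{(\lambda)}$ for the enclosing cylinder, the actual argument running over the usual iterated radii $\rho,2\rho,4\rho,\dots$),
\[
\sup_{t\in\Lambda_{\rho}^{(\lambda)}(t_o)}\bint_{B_\rho(x_o)}\Big|\tfrac{u-a}{\varrho}\Big|^{2}\,\dx
+\biint_{Q_\rho^{(\lambda)}}|Du|^{p}\,\dx\dt
\le c\,\biint_{Q_{2\rho}^{(\lambda)}}\Big|\tfrac{u-a}{\varrho}\Big|^{p}\,\dx\dt
+c\,\lambda^{p-2}\biint_{Q_{2\rho}^{(\lambda)}}\Big|\tfrac{u-a}{\varrho}\Big|^{2}\,\dx\dt
+c\,\biint_{Q_{2\rho}^{(\lambda)}}G\,\dx\dt .
\]

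The next step is the auxiliary bound $\sup_{t}\bint_{B}|\tfrac{u-a}{\varrho}|^{2}\,\dx\le c\,\lambda^{2}$ on a suitable sub-cylinder. For $p\ge2$ this follows from Jensen's inequality, the interior Sobolev--Poincar\'e inequality applied slice-wise, the energy estimate, and the lower scaling bound in~\eqref{scaling-interior}. For $\tfrac{2(n+1)}{n+2}<p<2$ it is the interior analogue of Lemma~\ref{lem:quadratic-term-singular}: one combines a slice-wise Gagliardo--Nirenberg inequality (Lemma~\ref{lem:GN} with $(\sigma,q,\theta,r)=(2,p,\tfrac{p}{2},2)$), the energy estimate, Young's inequality with exponents $\tfrac{2}{p}$ and $\tfrac{2}{2-p}$, and the absorption Lemma~\ref{lem:iteration}. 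Granting this, I would estimate the two $u-a$ terms on the right-hand side of the Caccioppoli estimate by Lemma~\ref{lem:GN} applied slice-wise: for $\sigma\in\{2,p\}$, write $\bint_{B}|\tfrac{u-a}{\varrho}|^{\sigma}$ as the interpolation of $\bint_{B}\big(|Du|^{q}+|\tfrac{u-a}{\varrho}|^{q}\big)$ and $\bint_{B}|\tfrac{u-a}{\varrho}|^{2}$ with $\theta=\tfrac{q}{\sigma}$ --- admissible when $\sigma=p$ because $q\ge\hat{p}_*$ and $q<p$, and when $\sigma=2<p$ after a preliminary Young step reducing to the case $\sigma=p$ --- then use the interior Poincar\'e inequality to absorb the $\bint_{B}|\tfrac{u-a}{\varrho}|^{q}$ part into the gradient term and the sup-bound $\le c\,\lambda^{2}$ for the remaining $L^{2}$ factor. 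This turns the right-hand side of the Caccioppoli estimate into a bound of the form $c\,\lambda^{p-q}\,\biint_{Q_{2\rho}^{(\lambda)}}\big(|Du|^{q}+G^{q/p}\big)\,\dx\dt$.

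Finally, Young's inequality with exponents $\tfrac{p}{p-q}$ and $\tfrac{p}{q}$ splits $\lambda^{p-q}\biint_{Q_{2\rho}^{(\lambda)}}|Du|^{q}$ into $\delta\lambda^{p}+c(\delta)\big(\biint_{Q_{2\rho}^{(\lambda)}}|Du|^{q}\big)^{p/q}$ and, using also Jensen's inequality (legitimate since $\tfrac{p}{q}>1$), turns $\lambda^{p-q}\biint_{Q_{2\rho}^{(\lambda)}}G^{q/p}$ into $\delta\lambda^{p}+c(\delta)\biint_{Q_{2\rho}^{(\lambda)}}G$. Any leftover dependence on the radii of the iterated cylinders is removed by the iteration Lemma~\ref{lem:iteration} applied to $\phi(s):=\iint_{Q_s^{(\lambda)}\cap E}|Du|^{p}\,\dx\dt$, and the \emph{upper} scaling bound $\lambda^{p}\le\biint_{Q_\rho^{(\lambda)}}(|Du|^{p}+G)\,\dx\dt$ from~\eqref{scaling-interior} lets me dispose of the $\delta\lambda^{p}$-contributions via $\delta\lambda^{p}\le\delta\biint_{Q_\rho^{(\lambda)}}|Du|^{p}\,\dx\dt+\delta\biint_{Q_\rho^{(\lambda)}}G\,\dx\dt$: for $\delta$ small the first term is absorbed into the left-hand side and the second joins the $G$-term. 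This yields the asserted reverse H\"older inequality for every $\max\{p-1,\hat{p}_*\}\le q<p$, the constraint $q\ge\hat{p}_*$ being forced by the slice-wise Gagliardo--Nirenberg step and $q\ge p-1$ by the H\"older/Young manipulations involving $|Du|^{p-1}$. I expect the main obstacle to be the singular range $p<2$: there the quadratic term $\lambda^{p-2}|u-a|^{2}$ genuinely contributes, and the sup-bound $\sup_{t}\bint_{B}|\tfrac{u-a}{\varrho}|^{2}\,\dx\le c\,\lambda^{2}$ must be secured first, through the delicate Gagliardo--Nirenberg-plus-absorption argument (the interior counterpart of Lemma~\ref{lem:quadratic-term-singular}), before the $\sigma=p$ interpolation can be closed; the degenerate case $p\ge2$ and the remaining bookkeeping are a routine transcription of the cylindrical interior case.
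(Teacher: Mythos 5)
Your sketch follows essentially the same route as the paper, which gives no independent proof but invokes the standard interior argument of \cite[Lemma~13]{Boegelein:1} (interior Caccioppoli estimate with a suitable mean value, slice-wise Gagliardo--Nirenberg interpolation, the sup-bound $\le c\lambda^2$, Young's inequality, and absorption of $\lambda^p$ via the scaling assumption~\eqref{scaling-interior}), with $G$ entering only through the energy estimate and the scaling for compatibility with the covering argument. One small caveat: since only $Q_{2\rho}^{(\lambda)}(z_o)\subset E$ is assumed, all intermediate cylinders must stay inside $Q_{2\rho}^{(\lambda)}$, so your parenthetical about ``iterated radii $\rho,2\rho,4\rho,\dots$'' should be understood as interpolating radii within the factor-$2$ range $[\rho,2\rho]$ (as the iteration Lemma~\ref{lem:iteration} indeed allows), not a genuinely expanding sequence.
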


\section{Proof of Theorem \ref{thm:main}}
\label{sec:proof-of-mainthm}
Fix a point $\tilde{z}_o \in \overline{E}$ and radii $0 < R \leq R_1 < R_2 \leq 2R \leq 2$, and consider concentric cylinders $Q_R(\tilde{z}_o) \subset Q_{R_1}(\tilde{z}_o) \subset Q_{R_2}(\tilde{z}_o) \subset Q_{2R}(\tilde{z}_o)$.
For simplicity, we will omit $\tilde{z}_o$ in the the notation from
now on.
Observe that 	
$$
Q_{2 \rho}(z_o) \subset Q_{2R} (z_o) \subset Q_{4R}
$$
for every $z_o \in Q_{2R}$ and $\rho \leq R$.	
Further, let
$$
\lambda_o = \left( 1+ \frac{1}{|Q_{4R}|}
 \iint_{Q_{4R} \cap E} |Du|^p + G \, \d x \d t \right)^\frac{d}{p},
$$
where $d$ is given by \eqref{eq:scaling-deficit}, and for $\lambda > \lambda_o$ and $r\in (0,2R)$ let
$$
\mathbf{E}(r,\lambda) = \left\{ z \in Q_{r} \cap E : z \text{ is a Lebesgue point of } |Du| \text{ and } |Du|(z) > \lambda \right\}.
$$

We define the radius
$$
R_o = \min \left\{ 1, \lambda^{(p-2) \frac{\beta}{2\beta -1}} \right\} R
$$
and observe that for every $r\in(0,R_o]$ and $z_o\in Q_{2R}$, we have
$Q_r^{(\lambda)}(z_o)\subset Q_{4R}$. 
We consider levels
\begin{equation} \label{eq:lambda-geq-Blambdao}
\lambda \geq B \lambda_o, \quad \text{ where }\quad B= \left( \frac{4 \hat c R}{R_2-R_1} \right)^{(n+2)\frac{d}{p}},
\end{equation}
with 
\begin{equation*}
  \hat{c} := \max \left\{ 240 , 48 \big( 2^{7\beta - 3} M \big)^\frac{1}{2\beta - 1} \right\}.
\end{equation*}
With these choices, we fix a point $z_o \in \mathbf{E}(R_1, \lambda)$.
First, we consider the case of a radius $r$ with $\frac{R_2-R_1}{\mathfrak{m}}
< r \leq R_o$, where we have set 
$$
	\mathfrak{m} =  \frac{\hat c}{\min \left\{ 1, \lambda^{(p-2) \frac{\beta}{2\beta -1}} \right\}}.
$$
Using the fact $B_r^{(\lambda)}(z_o)\subset B_{4R}$ and the definition
of $\lambda_o$, we estimate
\begin{align*}
   \frac{1}{\big|Q_r^{(\lambda)}\big|}
   \iint_{Q_r^{(\lambda)}(z_o)\cap E}  |D u|^p +G  \, \d x\d t
   &\leq
     \frac{|Q_{4R}|}{\big|Q_r^{(\lambda)}\big|}\frac{1}{|Q_{4R}|}
     \iint_{Q_{4R}\cap E} |D u|^p + G   \,\d x\d t \\
&\leq  \left( \frac{4R}{r} \right)^{n+2} \lambda^{p-2} \lambda_o^\frac{p}{d}.
\end{align*}
Now we distinguish between the cases $p<2$ and $p\ge2$. 
In the first case, we use the bound $r>\frac{R_2-R_1}{\mathfrak m}$
and the fact 
$-2-(p-2)(n+2)\frac{\beta}{2 \beta-1}=-\frac{p}{d}<0$, which follows from
assumption~\eqref{eq:beta-range}. This gives
\begin{align*}
  \left( \frac{4R}{r} \right)^{n+2} \lambda^{p-2} \lambda_o^\frac{p}{d}
&<  \left( \frac{4 \hat c R}{R_2-R_1} \right)^{n+2} \lambda^{p}\lambda^{-2-(p-2)(n+2) \frac{\beta}{2 \beta-1}} \lambda_o^\frac{p}{d} \\
&\leq  \left( \frac{4 \hat c R}{R_2-R_1} \right)^{n+2} \lambda^{p} \left[ \left(\frac{4 \hat c  R}{R_2-R_1} \right)^{(n+2)\frac{d}{p}} \lambda_o \right]^{-\frac{p}{d}} \lambda_o^\frac{p}{d} \\
&= \lambda^p.
\end{align*}
Similarly, in the case $p \geq 2$ we obtain
\begin{align*}
  \left( \frac{4R}{r} \right)^{n+2} \lambda^{p-2} \lambda_o^\frac{p}{d} 
&<  \left( \frac{4 \hat c R}{R_2-R_1} \right)^{n+2} \lambda^{p}\lambda^{-2} \lambda_o^\frac{p}{d} \\
&\leq  \left( \frac{4 \hat c R}{R_2-R_1} \right)^{n+2} \lambda^{p} \left[ \left(\frac{4 \hat c  R}{R_2-R_1} \right)^{(n+2)\frac{d}{p}} \lambda_o \right]^{-\frac{p}{d}} \lambda_o^\frac{p}{d} \\
&= \lambda^p,
\end{align*}
since in this case, we have $\frac{p}{d}=2$. Combining the three
preceding estimates, in any case we deduce
\begin{align*}
	\frac{1}{\big|Q_r^{(\lambda)}\big|}\iint_{Q_r^{(\lambda)}(z_o)\cap
  E}  |D u|^p +G   \, \d x\d t
	<\lambda^p,
\end{align*}
provided $\frac{R_2-R_1}{\mathfrak{m}}<r\leq R_o$.

On the other hand, we find that
$$
\liminf_{r \downarrow 0} \frac{1}{\babs{ Q_r^{(\lambda)} }} \iint_{Q_r^{(\lambda)}(z_o) \cap E} |Du|^p + G \, \d x \d t \geq |Du|^p (z_o) > \lambda^p.
$$
Thus, for each $z_o \in \mathbf{E}(R_1,\lambda)$ there exists a maximal radius $\rho_{z_o} \in \left( 0, \frac{R_2-R_1}{\mathfrak{m}} \right]$ such that 
\begin{equation} \label{eq:stopping-rho}
	\frac{1}{\left| Q_{\rho_{z_o}}^{(\lambda)} \right|}
	\iint_{Q_{\rho_{z_o}}^{(\lambda)}(z_o) \cap E} |Du|^p + G \, \d x \d t
	= \lambda^p,
\end{equation}
and 
\begin{equation} \label{eq:stopping-rho-sub}
	\frac{1}{\left| Q_{r}^{(\lambda)} \right|}
	\iint_{Q_{r}^{(\lambda)}(z_o) \cap E} |Du|^p + G \, \d x \d  t
	< \lambda^p
	\quad \text{ for every } r \in (\rho_{z_o}, R_o].
\end{equation}
In this context, observe that $Q_{\hat c \rho_{z_o}}^{(\lambda)}(z_o) \subset Q_{R_2}$ for any $z_o \in Q_{R_1}$.
Indeed, on the one hand, we find that $\hat{c} \rho_{z_o} \leq R_2 - R_1$.
On the other hand, since $\rho_{z_o} \leq \mathfrak{m}^{-1} (R_2-R_1)$, $\lambda \geq 1$, and since we have that $\min\left\{1,\lambda^{(p-2) \frac{\beta}{2\beta -1}}\right\} \leq \lambda^\frac{p-2}{2}$ in the case $p < 2$ by the fact that $\frac{\beta}{2\beta -1} > \frac12$, we obtain that
$$
R_1^2  + \lambda^{2-p} (\hat c \rho_{z_o})^2 \leq R_1^2 + \lambda^{2-p} \min \left\{1,\lambda^{(p-2) \frac{\beta}{2\beta -1}}\right\}^2 (R_2-R_1)^2  \leq R_2^2.
$$

\subsection{Reverse H\"older inequality}
\label{sec:reverse-Holder-application}
Let $\lambda$ satisfy~\eqref{eq:lambda-geq-Blambdao}, fix $z_o \in \mathbf{E} (R_1,\lambda)$ and let $\rho_{z_o} \in \left( 0, \frac{R_2-R_1}{\mathfrak{m}} \right]$ be the maximal radius such that \eqref{eq:stopping-rho} and \eqref{eq:stopping-rho-sub} hold true.
In order to derive a reverse Hölder inequality in $Q_{\rho_{z_o}}^{(\lambda)} = Q_{\rho_{z_o}}^{(\lambda)}(z_o)$, we distinguish between the cases of lateral, initial, and interior cylinders.

First, assume that~\eqref{eq:case-lateral} holds true with $\rho_{z_o}$ in place of $\rho$.
  Note that our assumptions imply $\rho_{z_o} \leq 1$, $\lambda\ge1$ and
$\lambda^{2-p}\rho_{z_o}^2\le1$. Moreover, since
$\hat{c} \geq 48\left( 2^{7\beta -3}
  M \right)^{\frac{1}{2\beta-1}}$, assumption
\eqref{eq:radius-small} is satisfied and we have $48\rho_{z_o}\le
R_o$. In view of~\eqref{eq:stopping-rho}
and~\eqref{eq:stopping-rho-sub}, the latter ensures that the scaling
assumption \eqref{eq:boundary-rev-Holder-scaling-assumption} of
Lemma~\ref{lem:reverse-holder-lateral} is satisfied.
Applying the lemma, we obtain that
\begin{align*}
\frac{1}{\babs{Q_{\rho_{z_o}}^{(\lambda)}}} \iint_{Q_{\rho_{z_o}}^{(\lambda)} \cap E} |Du|^p\, \d x\d t &\leq \left( \frac{c}{\babs{ Q_{48\rho_{z_o}}^{(\lambda)}}}\iint_{Q_{48\rho_{z_o}}^{(\lambda)} \cap E} |Du|^q\, \d x\d t \right)^\frac{p}{q} \\
&\quad+ \frac{c}{\babs{ Q_{48\rho_{z_o}}^{(\lambda)}}} \iint_{Q_{48\rho_{z_o}}^{(\lambda)} \cap E} G \, \d x \d t
\end{align*}
for every $\max\{\gamma, \hat p_*\} \leq q < p$.

Next, if~\eqref{eq:case-initial} holds true with $\rho = \rho_{z_o}$,
the assumptions of Lemma~\ref{lem:reverse-holder-initial} are
satisfied. The lemma gives us that
\begin{align*}
\frac{1}{|Q_{\rho_{z_o}}^{(\lambda)}|} \iint_{Q_{\rho_{z_o}}^{(\lambda)}\cap E} |Du|^p \, \d x \d t &\leq \left( \frac{c}{|Q_{4\rho_{z_o}}^{(\lambda)}|} \iint_{Q_{4\rho_{z_o}}^{(\lambda)}\cap E} |D u|^q \, \d x \d t \right)^\frac{p}{q} \\
&\quad + \frac{c}{|Q_{4\rho_{z_o}}^{(\lambda)}|} \iint_{Q_{4\rho_{z_o}}^{(\lambda)} \cap E} G \, \d x \d t\\
&\quad + c \lambda^{p - \hat p} \left( \bint_{B_{4 \rho_{z_o}}} |D g_o|^{\hat{p}_*} \, \d x \right)^\frac{\hat{p}}{\hat{p}_*},
\end{align*}
for every $\max \{ p-1, \hat p_* \} \leq q < p$.

Finally, in the interior case~\eqref{eq:case-interior} with $\rho =
\rho_{z_o}$, we apply Lemma~\ref{lem:reverse-holder-interior}. We obtain
\begin{align*}
\biint_{Q_{\rho_{z_o}}^{(\lambda)}} |Du|^p\, \d x\d t &\leq c \left( \biint_{Q_{2\rho_{z_o}}^{(\lambda)} } |Du|^q\, \d x\d t \right)^\frac{p}{q} + c \biint_{Q_{2\rho_{z_o}}^{(\lambda)}} G \, \d x \d t,
\end{align*}
for every $\max \{p-1, \hat{p}_*\} \leq q < p$. By taking into account all the cases, we end up with
\begin{align}	\label{eq:reverse-Holder-all-cases} 
	&\frac{1}{\babs{Q_{\rho_{z_o}}^{(\lambda)}}} \iint_{Q_{\rho_{z_o}}^{(\lambda)} \cap E} |Du|^p\, \d x\d t\\
	&\qquad\leq
	\left( \frac{c}{\babs{ Q_{48\rho_{z_o}}^{(\lambda)}}}\iint_{Q_{48\rho_{z_o}}^{(\lambda)} \cap E} |Du|^q\, \d x\d t \right)^\frac{p}{q}
	+ \frac{c}{\babs{ Q_{48\rho_{z_o}}^{(\lambda)}}} \iint_{Q_{48\rho_{z_o}}^{(\lambda)} \cap E} G \, \d x \d t
   \nonumber \\
        &\qquad\qquad
	+  c \lambda^{p - \hat p} \left( \frac{1}{\big|B_{4\rho_{z_o}}\big|}\int_{B_{4
                       \rho_{z_o}}\cap E^0} |D g_o|^{\hat{p}_*}  \, \d x \right)^\frac{\hat{p}}{\hat{p}_*}\chi_{\Lambda_{2\rho_{z_o}}^{(\lambda)}(t_o)}(0),
	\nonumber
\end{align}
for all $\max\{p-1,\gamma, \hat p_*\} \leq q < p$, with a constant $c=c(n,p,C_o,C_1,\alpha)$.

\subsection{Estimates on super-level sets}
For $r \in (0, 2R)$ denote
$$
\mathbf{G}(r,\lambda) = \left\{ z \in Q_{r}\cap E: z\text{ is a Lebesgue point of $G$ and } G(z) > \lambda^p  \right\}
$$
and 
$$
\mathbf{G}_o (r,\lambda) = \left\{ x \in B_{r}\cap E^0: x\text{ is a
    Lebesgue point of $Dg_o$ and } |D g_o|(x) > \lambda \right\}.
$$
Using~\eqref{eq:stopping-rho}, \eqref{eq:reverse-Holder-all-cases}, H\"older's inequality, and~\eqref{eq:stopping-rho-sub}, for $\eta \in (0,1)$, $z_o \in \mathbf{E} (R_1,\lambda)$, and any exponent $q \in [\max\{p-1,\gamma, \hat p_*\}, p)$ we find that
\begin{align*}
	\lambda^p
	&=
	\frac{1}{|Q_{\rho_{z_o}}^{(\lambda)}|}\iint_{Q_{\rho_{z_o}}^{(\lambda)} \cap E} |Du|^p + G  \, \d x \d t \\
	&\leq
	\left( \frac{c}{\babs{ Q_{48\rho_{z_o}}^{(\lambda)}}}\iint_{Q_{48\rho_{z_o}}^{(\lambda)} \cap E} |Du|^q\, \d x\d t \right)^\frac{p}{q}
	+ \frac{c}{\babs{ Q_{48\rho_{z_o}}^{(\lambda)}}} \iint_{Q_{48\rho_{z_o}}^{(\lambda)} \cap E} G \, \d x \d t \\
	&\phantom{=}
	+c  \lambda^{p - \hat p} \left( \frac{1}{|B_{4 \rho_{z_o}}|} \int_{B_{4 \rho_{z_o}}\cap E^0} |D g_o|^{\hat{p}_*} \, \d x \right)^\frac{\hat{p}}{\hat{p}_*} \\
	&\leq
	c \eta^p \lambda^p + \left( \frac{c}{\babs{ Q_{48\rho_{z_o}}^{(\lambda)}}}\iint_{Q_{48\rho_{z_o}}^{(\lambda)} \cap \mathbf{E}(R_2, \eta \lambda)} |Du|^q\, \d x\d t \right)^\frac{p}{q} \\
	&\phantom{=}
	+ \frac{c}{\babs{ Q_{48\rho_{z_o}}^{(\lambda)}}} \iint_{Q_{48\rho_{z_o}}^{(\lambda)} \cap  \mathbf{G}(R_2, \eta \lambda)} G \, \d x \d t \\
	&\phantom{=}
	+ c  \lambda^{p - \hat p} \left( \frac{1}{|B_{4 \rho_{z_o}}|} \int_{B_{4 \rho_{z_o}}\cap \mathbf{G}_o(R_2,\eta \lambda)} |D g_o|^{\hat{p}_*} \, \d x \right)^\frac{\hat{p}}{\hat{p}_*}\\
	&\leq
	c \eta^p \lambda^p +  \frac{c \lambda^{p-q}}{\babs{ Q_{48\rho_{z_o}}^{(\lambda)}}}\iint_{Q_{48\rho_{z_o}}^{(\lambda)} \cap \mathbf{E}(R_2, \eta \lambda)} |Du|^q\, \d x\d t  \\
	&\phantom{=}
	+ \frac{c}{\babs{ Q_{48\rho_{z_o}}^{(\lambda)}}} \iint_{Q_{48\rho_{z_o}}^{(\lambda)} \cap  \mathbf{G}(R_2, \eta \lambda)} G \, \d x \d t \\
	&\phantom{=}
	+ c  \lambda^{p - \hat p} \left( \frac{1}{|B_{4 \rho_{z_o}}|} \int_{B_{4 \rho_{z_o}}\cap \mathbf{G}_o(R_2,\eta \lambda)} |D g_o|^{\hat{p}_*} \, \d x \right)^\frac{\hat{p}}{\hat{p}_*}.
\end{align*}
Choosing $\eta^p = \frac{1}{2c}$ allows us to absorb the first term on the right-hand side into the left.
Using~\eqref{eq:stopping-rho-sub} and the fact that $\hat{c} \rho_{z_o} \leq R_o$, we estimate the left-hand side of the resulting inequality from below by
\begin{align*}
\lambda^p \geq \frac{1}{\big|Q_{\hat c \rho_{z_o}}^{(\lambda)}\big|}\iint_{Q_{\hat c \rho_{z_o}}^{(\lambda)}\cap E} |Du|^p  \, \d x \d t.
\end{align*}
We multiply both sides of the resulting inequality by $\big|Q_{\hat c
  \rho_{z_o}}^{(\lambda)}\big|$. Taking into account that $\lambda^{p-\hat{p}} |Q_{\hat c \rho_{z_o}}^{(\lambda)}| / |B_{4\rho_{z_o}}|^\frac{\hat p}{\hat p_*} \leq c \hat c^{n+2} \rho_{z_o}^{n+2-n \frac{\hat p}{\hat{p}_*}} \lambda^{2-\hat{p}}$ and $\lambda^{2-\hat{p}} \leq 1$, this leads us to
\begin{align}
	\iint_{Q_{\hat c \rho_{z_o}}^{(\lambda)}\cap E} &|Du|^p \, \d x \d t
	\nonumber \\
	&\leq c \hat c^{n+2} \iint_{Q_{48\rho_{z_o}}^{(\lambda)} \cap \mathbf{E}(R_2, \eta \lambda)}  \lambda^{p-q} |Du|^q\, \d x\d t  
	\label{eq:estimate-superlevel-set} \\ &\phantom{=}
	+ c \hat c^{n+2} \iint_{Q_{48\rho_{z_o}}^{(\lambda)} \cap  \mathbf{G}(R_2, \eta \lambda)} G \, \d x \d t
	\nonumber \\ &\phantom{=}
	+ c \hat c^{n+2} \rho_{z_o}^{n+2-n \frac{\hat p}{\hat{p}_*}} \left(  \int_{B_{4 \rho_{z_o}}\cap \mathbf{G}_o(R_2,\eta \lambda)} |D g_o|^{\hat{p}_*} \, \d x \right)^\frac{\hat{p}}{\hat{p}_*}.
	\nonumber
\end{align}
In the following, we cover $\mathbf{E}(R_1,\lambda)$ by cylinders $\big\{ Q_{48\rho_{z_o}}^{(\lambda)}(z_o) \big\}_{z_o \in \mathbf{E}(R_1,\lambda)}$. By the Vitali covering property, see e.g.~\cite[Theorem 1.2]{Heinonen-book}, there exists a countable, pairwise disjoint collection $\big\{Q_{48\rho_{z_i}}^{(\lambda)}(z_i)\big\}_{i \in \N}$ such that
$$
\mathbf{E}(R_1, \lambda) \subset \bigcup_{i \in \N} Q_{\hat c \rho_{z_i}}^{(\lambda)}(z_i) \subset Q_{R_2}.
$$
Here, we have used that $\hat{c} \geq 240$.
Thus, by \eqref{eq:estimate-superlevel-set} we have that
\begin{align*}
\iint_{\mathbf{E}(R_1, \lambda)} |Du|^p \, \d x \d t &\leq c \iint_{\mathbf{E}(R_2, \eta \lambda)}  \lambda^{p-q} |Du|^q\, \d x\d t + c \iint_{ \mathbf{G}(R_2, \eta \lambda)} G \, \d x \d t \\
&\quad + c (R_2-R_1)^{n+2-n \frac{\hat p}{\hat p_*}} \left(  \int_{ \mathbf{G}_o(R_2,\eta \lambda)} |D g_o|^{\hat{p}_*} \, \d x \right)^\frac{\hat{p}}{\hat{p}_*} ,
\end{align*}
with a constant $c=c(n,p,C_o,C_1,\alpha,\hat c)=c(n,p,C_o,C_1,\alpha,\beta,M)$.
Observe that 
\begin{align*}
\iint_{\mathbf{E}(R_1, \eta \lambda) \setminus \mathbf{E}(R_1, \lambda)} |Du|^p  \, \d x \d t \leq \iint_{\mathbf{E}(R_2, \eta \lambda)} \lambda^{p-q} |Du|^q \, \d x \d t.
\end{align*}
Combining this with the penultimate estimate, and replacing
$\eta\lambda$ by $\lambda$ to simplify notation, we deduce
\begin{align*}
\iint_{\mathbf{E}(R_1, \lambda)} |Du|^p \, \d x \d t &\leq c \iint_{\mathbf{E}(R_2,  \lambda)}  \lambda^{p-q} |Du|^q\, \d x\d t + c \iint_{ \mathbf{G}(R_2,  \lambda)} G \, \d x \d t \\
&\quad + c (R_2-R_1)^{n+2-n \frac{\hat p}{\hat p_*}} \left(  \int_{ \mathbf{G}_o(R_2,\lambda)} |D g_o|^{\hat{p}_*} \, \d x \right)^\frac{\hat{p}}{\hat{p}_*} \\
&\leq c \iint_{\mathbf{E}(R_2,  \lambda)}  \lambda^{p-q} |Du|^q\, \d x\d t + c \iint_{ \mathbf{G}(R_2,  \lambda)} G \, \d x \d t \\
&\quad + c |Q_{R_2}| \left( \frac{1}{|B_{R_2}|} \int_{ \mathbf{G}_o(R_2,\lambda)} |D g_o|^{\hat{p}_*} \, \d x \right)^\frac{\hat{p}}{\hat{p}_*}
\end{align*}
for any $\lambda \geq \eta B \lambda_o$, where $c=c(n,p,C_o,C_1,\alpha,\beta,M)$.
We denote
$$
|Du|_k = \min \{ |Du| , k \},
$$
and
$$
\mathbf{E}_k (r,\lambda) = \left\{ z \in Q_r \cap E: |Du|_k(z) > \lambda \right\}.
$$
Note that $\mathbf{E}_k (r,\lambda) = \varnothing$ if $k \leq \lambda$, and $\mathbf{E}_k (r,\lambda) = \mathbf{E} (r,\lambda)$ if $k > \lambda$.
Therefore, in the truncated level sets, the last inequality above implies
\begin{align*}
\iint_{\mathbf{E}_k(R_1, \lambda)} |Du|_k^{p-q} |Du|^q \, \d x \d t
&\leq c \iint_{\mathbf{E}_k(R_2,  \lambda)}  \lambda^{p-q} |Du|^q\, \d x\d t + c \iint_{ \mathbf{G}(R_2,  \lambda)} G \, \d x \d t \\
&\quad + c |Q_{R_2}| \left( \frac{1}{|B_{R_2}|} \int_{ \mathbf{G}_o(R_2,\lambda)} |D g_o|^{\hat{p}_*} \, \d x \right)^\frac{\hat{p}}{\hat{p}_*}.
\end{align*}
We multiply the preceding inequality by $\lambda^{\eps - 1}$ and integrate over $(\lambda_1,\infty)$, where $\lambda_1 := \eta B \lambda_o$.
Since $\mathbf{G}_o (R_2,\lambda) \subset \mathbf{G}_o (R_2,\lambda_1)$ for $\lambda > \lambda_1$, and by Fubini's theorem and H\"older's inequality, we obtain that
\begin{align*}
	&\int_{\lambda_1}^\infty \lambda^{\eps-1} \left( \frac{1}{|B_{R_2}|} \int_{ \mathbf{G}_o (R_2,\lambda)} |D g_o|^{\hat{p}_*} \, \d x \right)^\frac{\hat{p}}{\hat{p}_*} \, \d \lambda \\
	&\quad \leq
	\left( \frac{1}{|B_{R_2}|} \int_{ \mathbf{G}_o (R_2,\lambda_1)} |D g_o|^{\hat{p}_*} \, \d x \right)^{\frac{\hat{p}}{\hat{p}_*}-1}
	\int_{\lambda_1}^\infty \lambda^{\eps-1} \frac{1}{|B_{R_2}|} \int_{ \mathbf{G}_o (R_2,\lambda)} |D g_o|^{\hat{p}_*} \, \d x \, \d \lambda \\
&\quad \leq \frac{1}{\eps}\left( \frac{1}{|B_{R_2}|} \int_{ \mathbf{G}_o (R_2,\lambda_1)} |D g_o|^{\hat{p}_*+\eps} \, \d x \right)^{\frac{\hat{p} - \hat p_*}{\hat{p}_*+\eps}} \frac{1}{ |B_{R_2}|} \int_{ \mathbf{G}_o (R_2,\lambda_1)} |D g_o|^{\hat p_* + \eps} \, \d x \\
&\quad \leq \frac{1}{\eps}\left( \frac{1}{|B_{R_2}|} \int_{ \mathbf{G}_o (R_2,\lambda_1)} |D g_o|^{\hat{p}_*+\eps} \, \d x \right)^{\frac{\hat{p} +\eps }{\hat{p}_*+\eps}}.
\end{align*}
Analogously, by Fubini's theorem we find that
\begin{align*}
&\int_{\lambda_1}^\infty \lambda^{\eps -1} \iint_{\mathbf{E}_k(R_1,\lambda)} |Du|_k^{p-q} |Du|^q \, \d x\d t \, \d \lambda \\
&\quad = \frac{1}{\eps} \iint_{\mathbf{E}_k(R_1,\lambda_1)} \left( |Du|_k^{p+\eps-q} |Du|^q - \lambda_1^\eps |Du|_k^{p-q} |Du|^q \right) \, \d x \d t,
\end{align*}
that
\begin{align*}
&\int_{\lambda_1}^\infty \lambda^{p-q+\eps-1} \iint_{\mathbf{E}_k(R_2,\lambda)} |Du|^q \, \d x \d t \, \d \lambda \\
&\quad \leq \frac{1}{p-q+\eps} \iint_{\mathbf{E}_k(R_2,\lambda_1)} |Du|_k^{p-q+\eps} |Du|^q \, \d x  \d t,
\end{align*}
and that
\begin{align*}
&\int_{\lambda_1}^\infty \lambda^{\eps-1} \iint_{\mathbf{G}(R_2,\lambda)} G \, \d x \d t \, \d \lambda  \leq \frac{1}{\eps} \iint_{\mathbf{G}(R_2,\lambda_1)} G^{1+ \frac{\eps}{p}} \, \d x  \d t.
\end{align*}
Finally, by definition of $\mathbf{E}_k(R_1,\lambda_1)$ we conclude that
\begin{align*}
	\iint_{(Q_{R_1}\cap E) \setminus \mathbf{E}_k(R_1,\lambda_1)}
  	&|Du|_k^{p+\eps-q} |Du|^q \, \d x\d t
  	\\ & \leq \lambda_1^\eps \int_{(Q_{R_1}\cap E) \setminus \mathbf{E}_k(R_1,\lambda_1)} |Du|_k^{p-q} |Du|^q \, \d x\d t.
\end{align*}
By combining all the estimates we have that
\begin{align*}
  \iint_{Q_{R_1}\cap E} |Du|_k^{p-q+\eps} |Du|^q  \, \d x\d t
  &\leq \frac{c_* \eps}{p-q} \iint_{Q_{R_2}\cap E} |Du|_k^{p-q+\eps} |Du|^q  \, \d x\d t \\
&\quad + \lambda_1^\eps \iint_{Q_{2R}\cap E} |Du|^p \, \d x \d t \\
&\quad + c \iint_{Q_{2R}\cap E} G^{1+\frac{\eps}{p}} \, \d x \d t \\
&\quad + c |Q_{2R}| \left( \frac{1}{|B_{2R}|} \int_{ B_{2R} \cap E^0 } |D g_o|^{\hat{p}_*+\eps} \, \d x \right)^{\frac{\hat{p} +\eps }{\hat{p}_*+\eps}},
\end{align*}
with a constant $c_\ast = c_\ast(n,p,C_o,C_1, M, \alpha,\beta) \geq 1$.
Then, we choose 
$$
\eps_o = \frac{p-q}{2c_*} < 1
$$
and consider $\eps \leq \eps_o$.
Since $\lambda_1^\eps \leq B \lambda_o^\epsilon$, recalling the definition of $B$ we obtain that
\begin{align*}
\iint_{Q_{R_1}\cap E} |Du|_k^{p-q+\eps} |Du|^q  \, \d x\d t &\leq \frac12 \iint_{Q_{R_2}\cap E} |Du|_k^{p-q+\eps} |Du|^q  \, \d x\d t \\
&\quad + c \left( \frac{R}{R_2-R_1} \right)^{(n+2)\frac{d}{p}} \lambda_o^\eps \iint_{Q_{2R}\cap E} |Du|^p  \, \d x \d t \\
&\quad + c \iint_{Q_{2R}\cap E} G^{1+\frac{\eps}{p}} \, \d x \d t \\
&\quad + c |Q_{2R}| \left( \frac{1}{|B_{2R}|} \int_{ B_{2R} \cap E^0 } |D g_o|^{\hat{p}_*+\eps} \, \d x \right)^{\frac{\hat{p} +\eps }{\hat{p}_*+\eps}}.
\end{align*}
By using the iteration lemma, i.e., Lemma~\ref{lem:iteration}, and
passing to the limit $k\to\infty$ by means of Fatou's lemma, we obtain 
\begin{align*}
\iint_{Q_{R}\cap E} |Du|^{p+\eps}\, \d x \d t &\leq c \lambda_o^\eps \iint_{Q_{2R}\cap E} |Du|^p \, \d x \d t + c \iint_{Q_{2R}\cap E} G^{1+\frac{\eps}{p}}  \, \d x \d t \\
&\quad + c|Q_{2R}|  \left( \frac{1}{|B_{2R}|} \int_{ B_{2R} \cap E^0 } |D g_o|^{\hat{p}_*+\eps} \, \d x \right)^{\frac{\hat{p} +\eps }{\hat{p}_*+\eps}}.
\end{align*}
Recalling the definition of $\lambda_o$ and dividing by $|Q_R|$, this concludes the proof of Theorem \ref{thm:main}.

\bigskip
\textbf{Acknowledgements.}
This research was funded in whole or in part by the Austrian Science Fund (FWF) projects \emph{evolutionary problems in noncylindrical domains}, grant DOI 10.55776/J4853, and \emph{widely degenerate partial differential equations}, grant DOI 10.55776/P36295.
The first two authors thank the Faculty of Mathematics of the University of Duisburg-Essen for the hospitality during their research visit.
For the purpose of open access, the authors have applied a CC BY public copyright license to any Author Accepted Manuscript (AAM) version arising from this submission.


\begin{thebibliography}{10}
\bibitem{Boegelein:1}
\newblock V.~B\"ogelein.
\newblock Higher integrability for weak solutions of higher order degenerate parabolic systems. 
\newblock {\em Ann.~Acad.~Sci.~Fenn.~Math.} 33(2):387--412, 2008.

\bibitem{Boegelein-Duzaar-Korte-Scheven}
\newblock V.~B\"ogelein, F.~Duzaar, R.~Korte and C.~Scheven.
\newblock The higher integrability of weak solutions of porous medium systems.
\newblock {\em Adv.~Nonlinear Anal.} 8(1):1004--1034, 2019.

\bibitem{BDSS}
\newblock V.~B\"ogelein, F.~Duzaar, C.~Scheven and T.~Singer.
\newblock Existence of variational solutions in noncylindrical domains.
\newblock \emph{SIAM J. Math. Anal.} 50(3):3007--3057, 2018.

\bibitem{BP}
\newblock V.~B\"ogelein and M.~Parviainen.
\newblock Self-improving property of nonlinear higher order parabolic systems near the boundary. 
\newblock  {\em NoDEA Nonlinear Differential Equations Appl.} 17(1):21--54, 2010.

\bibitem{Calvo-Novaga-Orlandi}
\newblock J.~Calvo, M.~Novaga and G.~Orlandi.
\newblock Parabolic equations in time dependent domains.
\newblock \emph{J.~Evol.~Equ.} 17:781--804, 2017.

\bibitem{Crampin-Gaffney-Maini}
\newblock E.~Crampin, E.~Gaffney and P.~Maini.
\newblock Reaction and diffusion on growing domains: Scenarios for robust pattern formation.
\newblock \emph{Bull.~Math.~Biol.} 61:1093--1120, 1999.

\bibitem{Giaquinta-Struwe}
\newblock M.~Giaquinta and M.~Struwe.
\newblock On the partial regularity of weak solutions of nonlinear parabolic systems.
\newblock {\em Math.~Z.} 179(4):437--451, 1982.

\bibitem{Giusti:book}
\newblock E.~Giusti.
\newblock {\em Direct methods in the calculus of variations}.
\newblock World Scientific Publishing Company, Tuck Link, Singapore, 2003.

\bibitem{Heinonen-book}
\newblock J.~Heinonen.
\newblock \emph{Lectures on analysis on metric spaces}.
\newblock Universitext, Springer-Verlag, New York, 2001.

\bibitem{Heinonen-etal-book}
\newblock J.~Heinonen, T.~Kilpeläinen and O.~Martio.
\newblock \emph{Nonlinear potential theory of degenerate elliptic equations}.
\newblock Courier Dover Publication, 2006.

\bibitem{Kilpelainen-Koskela}
\newblock T.~Kilpeläinen and P.~Koskela.
\newblock Global integrability of the gradients of solutions to partial differential equations.
\newblock \emph{Nonlinear Anal.} 23(7):899--909, 1994.

\bibitem{KLV21}
\newblock J.~Kinnunen, J.~Lehrbäck and A.~Vähäkangas.
\newblock \emph{Maximal function methods for Sobolev spaces}.
\newblock Math.~Surveys Monogr., 257, Amer.~Math.~Soc, Providence, RI, 2021.

\bibitem{Kinnunen-Lewis:1}
\newblock J.~Kinnunen and J.~L.~Lewis.
\newblock Higher integrability for parabolic systems of $p$-Laplacian type.
\newblock {\em Duke Math.~J.} 102(2):253--271, 2000.

\bibitem{Kinnunen-Lewis:very-weak}
J.~Kinnunen and J.~L.~Lewis.
\newblock Very weak solutions of parabolic systems of p-Laplacian type.
\newblock \emph{Ark.~Mat.} 40(1):105--132, 2002.

\bibitem{Knobloch-Krechetnikov}
\newblock E.~Knobloch and R. Krechetnikov.
\newblock Problems on time-varying domains: Formulation, dynamics, and challenges.
\newblock \emph{Acta Appl.~Math.} 137:123--157, 2015.

\bibitem{Krechetnikov}
\newblock R.~Krechetnikov.
\newblock A linear stability theory on time-invariant and time-dependent spatial domains with symmetry: The drop splash problem.
\newblock \emph{Dyn.~Partial Differ.~Equ.} 8:47--67, 2011.

\bibitem{Lan-etal}
\newblock D.~Lan, D.~T.~Son, B.~Q.~Tang and L.~T.~Thuy.
\newblock Quasilinear parabolic equations with first order terms and $L^1$-data in moving domains.
\newblock \emph{Nonlinear Analysis} 206, no.~112233, 2021.

\bibitem{Lewis}
\newblock J.~L.~Lewis.
\newblock Uniformly fat sets.
\newblock \emph{Trans.~Amer.~Math.~ Soc.} 308:177--196, 1988.

\bibitem{Mazya-book}
\newblock V.~G.~Maz'ya.
\newblock \emph{Sobolev spaces}.
\newblock Springer Series in Soviet Mathematics, Springer-Verlag,
Berlin, 1985.
Translated from the Russian by T.~O.~Shaposhnikova.

\bibitem{Meyers-Elcrat}
\newblock N.~G.~Meyers and A.~Elcrat.
\newblock Some results on regularity for solutions of non-linear elliptic systems and quasi-regular functions.
\newblock {\em Duke Math.~J.} 42:121--136, 1975.

\bibitem{Murray}
\newblock J.~Murray.
\newblock \emph{Mathematical Biology. I. An Introduction}.
\newblock Interdiscip.~Appl.~Math.~17, Springer-Verlag, New York, 2002.

\bibitem{Nirenberg}
\newblock L.~Nirenberg.
\newblock On elliptic partial differential equations.
\newblock \emph{Ann.~Sc.~Norm.~Super.~Pisa Cl.~Sci.~(3)} 13(2):115--162, 1959.

\bibitem{Paronetto}
\newblock F.~Paronetto.
\newblock An existence result for evolution equations in noncylindrical domains.
\newblock \emph{NoDEA Nonlinear Differ.~Equ.~Appl.} 20:1723--1740, 2013.

\bibitem{Parviainen-degenerate}
\newblock M.~Parviainen.
\newblock Global gradient estimates for degenerate parabolic equations in nonsmooth domains. 
\newblock{\em Ann.~Mat.~Pura Appl.} 188:333--358, 2009.

\bibitem{Parviainen-singular}
\newblock M.~Parviainen.
\newblock Reverse H\"older inequalities for singular parabolic equations near the boundary.
\newblock \emph{J.~Differential Equations} 246(2):512--540, 2009.

\bibitem{SSSS-general}
\newblock L.~Sch\"atzler, C.~Scheven, J.~Siltakoski and C.~Stanko.
\newblock Existence of variational solutions to doubly nonlinear systems in general noncylindrical domains.
\newblock \emph{Preprint, 2025.}
\href{https://doi.org/10.48550/arXiv.2506.09617}{doi:10.48550/arXiv.2506.09617}

\bibitem{SSSS-nondecreasing}
\newblock L.~Sch\"atzler, C.~Scheven, J.~Siltakoski and C.~Stanko.
\newblock Existence of variational solutions to doubly nonlinear systems in nondecreasing domains.
\newblock \emph{Accepted for publication in J.~Evol.~Equ., 2025.}
\href{https://doi.org/10.48550/arXiv.2505.00148}{doi:10.48550/arXiv.2505.00148}

\bibitem{Shelley-Tiany-Wlodarski}
\newblock M.~Shelley, F.~Tiany and K. Wlodarski.
\newblock Hele-Shaw flow and pattern formation in a timedependent gap.
\newblock \emph{Nonlinearity} 10:1471--1495, 1997.

\bibitem{Stefan}
\newblock J.~Stefan.
\newblock \"Uber die Theorie der Eisbildung, insbesondere \"uber die Eisbildung im Polarmeere.
\newblock \emph{Ann.~Phys.~Chem.} 42:269--286, 1891.
\end{thebibliography}
\end{document}